\newtheorem{cor}[subsubsection]{Corollary}
\newtheorem{lem}[subsubsection]{Lemma}
\newtheorem{prop}[subsubsection]{Proposition}
\newtheorem{conj}[subsubsection]{Conjecture}
\newtheorem{thm}[subsubsection]{Theorem}
\theoremstyle{remark}
\newtheorem{rem}[subsubsection]{Remark}
\theoremstyle{definition}
\theoremstyle{remark}
\newcommand{\thmref}[1]{Theorem~\ref{#1}}
\newcommand{\secref}[1]{Sect.~\ref{#1}}
\newcommand{\lemref}[1]{Lemma~\ref{#1}}
\newcommand{\propref}[1]{Proposition~\ref{#1}}
\newcommand{\corref}[1]{Corollary~\ref{#1}}
\newcommand{\conjref}[1]{Conjecture~\ref{#1}}
\numberwithin{equation}{section}
\newcommand{\nc}{\newcommand}
\nc{\renc}{\renewcommand}
\nc{\ssec}{\subsection}
\nc{\sssec}{\subsubsection}
\nc{\on}{\operatorname}
\nc{\ips}{{\iota_P^{(S)}}}
\nc{\ipms}{{\iota_{P^-}^{(S)}}}
\nc{\sfpps}{{\sfp_P^{(S)}}}
\nc{\sfppms}{{\sfp_{P^-}^{(S)}}}
\nc\ol{\overline}
\nc\wt{\widetilde}
\nc\tboxtimes{\wt{\boxtimes}}
\nc\tstar{\wt{\star}}
\nc{\alp}{\alpha}
\nc{\ZZ}{{\mathbb Z}}
\nc{\NN}{{\mathbb N}}
\nc{\BF}{{\mathbb F}}
\nc{\OO}{{\mathbb O}}
\renc{\SS}{{\mathbb S}}
\nc{\DD}{{\mathbb D}}
\nc{\GG}{{\mathbb G}}
\nc{\Fq}{{\mathbb F}_q}
\nc{\Fqb}{\ol{\mathbb F}_q}
\nc{\Ql}{{\mathbb Q}_\ell}
\nc{\Qlb}{{\ol{\mathbb Q}_\ell}}
\nc{\id}{\text{id}}
\nc\X{\mathcal X}
\nc{\red}{\on{red}}
\nc{\Ho}{\on{Ho}}
\nc{\Hom}{\on{Hom}}
\nc{\coef}{\on{coef}}
\nc{\Lie}{\on{Lie}}
\nc{\Diff}{\on{Diff}}
\nc{\Loc}{\on{Loc}}
\nc{\Pic}{\on{Pic}}
\nc{\Bun}{\on{Bun}}
\nc{\IC}{\on{IC}}
\nc{\Aut}{\on{Aut}}
\nc{\rk}{\on{rk}}
\nc{\Sh}{\on{Sh}}
\nc{\Perv}{\on{Perv}}
\nc{\pos}{{\on{pos}}}
\nc{\Conv}{\on{Conv}}
\nc{\Sph}{\on{Sph}}
\nc{\Sym}{\on{Sym}}
\nc{\BunBb}{\overline{\Bun}_B}
\nc{\BunNb}{\overline{\Bun}_N}
\nc{\BunTb}{\overline{\Bun}_T}
\nc{\BunBbm}{\overline{\Bun}_{B^-}}
\nc{\BunBbel}{\overline{\Bun}_{B,el}}
\nc{\BunBbmel}{\overline{\Bun}_{B^-,el}}
\nc{\Buno}{\overset{o}{\Bun}}
\nc{\BunPb}{{\overline{\Bun}_P}}
\nc{\BunBM}{\Bun_{B(M)}}
\nc{\BunBMb}{\overline{\Bun}_{B(M)}}
\nc{\BunPbw}{{\widetilde{\Bun}_P}}
\nc{\BunBP}{\widetilde{\Bun}_{B,P}}
\nc{\GUb}{\overline{G/U}}
\nc{\GUPb}{\overline{G/U(P)}}
\nc{\Hhom}{\underline{\on{Hom}}}
\nc\syminfty{\on{Sym}^{\infty}}
\nc\lal{\ol{\lambda}}
\nc\xl{\ol{x}}
\nc\thl{\ol{\theta}}
\nc\nul{\ol{\nu}}
\nc\mul{\ol{\mu}}
\nc{\oX}{\overset{\circ}{X}{}}
\nc{\wtoX}{\wt{\overset{\circ}{X}}{}}
\nc{\hl}{\overset{\leftarrow}h{}}
\nc{\hr}{\overset{\rightarrow}h{}}
\nc{\M}{{\mathcal M}}
\nc{\N}{{\mathcal N}}
\nc{\F}{{\mathcal F}}
\nc{\D}{{\mathcal D}}
\nc{\Q}{{\mathcal Q}}
\nc{\Y}{{\mathcal Y}}
\nc{\G}{{\mathcal G}}
\nc{\E}{{\mathcal E}}
\nc{\CalC}{{\mathcal C}}
\nc\Dh{\widehat{\D}}
\nc{\C}{{\mathcal C}}
\nc{\K}{{\mathcal K}}
\renewcommand{\H}{{\mathcal H}}
\nc{\T}{{\mathcal T}}
\nc{\V}{{\mathcal V}}
\renc{\P}{{\mathcal P}}
\nc{\A}{{\mathcal A}}
\nc{\B}{{\mathcal B}}
\nc{\U}{{\mathcal U}}
\nc{\Gr}{{\on{Gr}}}
\nc{\frn}{{\check{\mathfrak u}(P)}}
\nc{\fC}{\mathfrak C}
\nc{\p}{\mathfrak p}
\nc{\q}{\mathfrak q}
\nc\f{{\mathfrak f}}
\nc{\qo}{{\mathfrak q}}
\nc{\po}{{\mathfrak p}}
\nc{\s}{{\mathfrak s}}
\nc\w{\text{w}}
\renewcommand{\mod}{{\on{-mod}}}
\nc\Spec{\on{Spec}}
\nc\Proj{\on{Proj}}
\nc\Mod{\on{Mod}}
\nc{\tw}{\widetilde{\mathfrak t}}
\nc{\pw}{\widetilde{\mathfrak p}}
\nc{\qw}{\widetilde{\mathfrak q}}
\nc{\jw}{\widetilde j}
\nc{\grb}{\overline{\Gr}}
\nc{\I}{\mathcal I}
\nc{\lambdach}{{\check\lambda}}
\nc{\Lambdach}{{\check\Lambda}{}}
\nc{\much}{{\check\mu}}
\nc{\omegach}{{\check\omega}}
\nc{\nuch}{{\check\nu}}
\nc{\etach}{{\check\eta}}
\nc{\alphach}{{\check\alpha}}
\nc{\oblvtach}{{\check\oblvta}}
\nc{\rhoch}{{\check\rho}}
\nc{\ch}{{\check h}}
\nc{\Hb}{\overline{\H}}
\nc{\BA}{{\mathbb{A}}}
\nc{\BC}{{\mathbb{C}}}
\nc{\BG}{{\mathbb{G}}}
\nc{\BM}{{\mathbb{M}}}
\nc{\BO}{{\mathbb{O}}}
\nc{\BD}{{\mathbb{D}}}
\nc{\BN}{{\mathbb{N}}}
\nc{\BP}{{\mathbb{P}}}
\nc{\BQ}{{\mathbb{Q}}}
\nc{\BR}{{\mathbb{R}}}
\nc{\BZ}{{\mathbb{Z}}}
\nc{\BS}{{\mathbb{S}}}
\nc{\Deep}{{\bf{deep}}}
\nc{\deep}{deep}
\nc{\CA}{{\mathcal{A}}}
\nc{\CB}{{\mathcal{B}}}
\nc{\CE}{{\mathcal{E}}}
\nc{\CF}{{\mathcal{F}}}
\nc{\CH}{{\mathcal{H}}}
\nc{\CL}{{\mathcal{L}}}
\nc{\CC}{{\mathcal{C}}}
\nc{\CG}{{\mathcal{G}}}
\nc{\CalD}{{\mathcal{D}}}
\nc{\CM}{{\mathcal{M}}}
\nc{\CN}{{\mathcal{N}}}
\nc{\CK}{{\mathcal{K}}}
\nc{\CO}{{\mathcal{O}}}
\nc{\CP}{{\mathcal{P}}}
\nc{\CQ}{{\mathcal{Q}}}
\nc{\CR}{{\mathcal{R}}}
\nc{\CS}{{\mathcal{S}}}
\nc{\CT}{{\mathcal{T}}}
\nc{\CU}{{\mathcal{U}}}
\nc{\CV}{{\mathcal{V}}}
\nc{\CW}{{\mathcal{W}}}
\nc{\CX}{{\mathcal{X}}}
\nc{\CY}{{\mathcal{Y}}}
\nc{\CZ}{{\mathcal{Z}}}
\nc{\CI}{{\mathcal{I}}}
\nc{\csM}{{\check{\mathcal A}}{}}
\nc{\oM}{{\overset{\circ}{\mathcal M}}{}}
\nc{\obM}{{\overset{\circ}{\mathbf M}}{}}
\nc{\oCA}{{\overset{\circ}{\mathcal A}}{}}
\nc{\obA}{{\overset{\circ}{\mathbf A}}{}}
\nc{\ooM}{{\overset{\circ}{M}}{}}
\nc{\osM}{{\overset{\circ}{\mathsf M}}{}}
\nc{\vM}{{\overset{\bullet}{\mathcal M}}{}}
\nc{\nM}{{\underset{\bullet}{\mathcal M}}{}}
\nc{\oD}{{\overset{\circ}{\mathcal D}}{}}
\nc{\obD}{{\overset{\circ}{\mathbf D}}{}}
\nc{\oA}{{\overset{\circ}{\mathbb A}}{}}
\nc{\op}{{\overset{\bullet}{\mathbf p}}{}}
\nc{\cp}{{\overset{\circ}{\mathbf p}}{}}
\nc{\oU}{{\overset{\bullet}{\mathcal U}}{}}
\nc{\oZ}{{\overset{\circ}{\mathcal Z}}{}}
\nc{\ofZ}{{\overset{\circ}{\mathfrak Z}}{}}
\nc{\oF}{{\overset{\circ}{\fF}}}
\nc{\fa}{{\mathfrak{a}}}
\nc{\fb}{{\mathfrak{b}}}
\nc{\fd}{{\mathfrak{d}}}
\nc{\ff}{{\mathfrak{f}}}
\nc{\fg}{{\mathfrak{g}}}
\nc{\fgl}{{\mathfrak{gl}}}
\nc{\fh}{{\mathfrak{h}}}
\nc{\fj}{{\mathfrak{j}}}
\nc{\fk}{{\mathfrak{k}}}
\nc{\fl}{{\mathfrak{l}}}
\nc{\fm}{{\mathfrak{m}}}
\nc{\fn}{{\mathfrak{n}}}
\nc{\fu}{{\mathfrak{u}}}
\nc{\fp}{{\mathfrak{p}}}
\nc{\fr}{{\mathfrak{r}}}
\nc{\fs}{{\mathfrak{s}}}
\nc{\ft}{{\mathfrak{t}}}
\nc{\fv}{{\mathfrak{v}}}
\nc{\fz}{{\mathfrak{z}}}
\nc{\fsl}{{\mathfrak{sl}}}
\nc{\hsl}{{\widehat{\mathfrak{sl}}}}
\nc{\hgl}{{\widehat{\mathfrak{gl}}}}
\nc{\hg}{{\widehat{\mathfrak{g}}}}
\nc{\chg}{{\widehat{\mathfrak{g}}}{}^\vee}
\nc{\hn}{{\widehat{\mathfrak{n}}}}
\nc{\chn}{{\widehat{\mathfrak{n}}}{}^\vee}
\nc{\fA}{{\mathfrak{A}}}
\nc{\fB}{{\mathfrak{B}}}
\nc{\fD}{{\mathfrak{D}}}
\nc{\fE}{{\mathfrak{E}}}
\nc{\fF}{{\mathfrak{F}}}
\nc{\fG}{{\mathfrak{G}}}
\nc{\fK}{{\mathfrak{K}}}
\nc{\fL}{{\mathfrak{L}}}
\nc{\fM}{{\mathfrak{M}}}
\nc{\fN}{{\mathfrak{N}}}
\nc{\fP}{{\mathfrak{P}}}
\nc{\fU}{{\mathfrak{U}}}
\nc{\fV}{{\mathfrak{V}}}
\nc{\fZ}{{\mathfrak{Z}}}
\nc{\bb}{{\mathbf{b}}}
\nc{\bc}{{\mathbf{c}}}
\nc{\bd}{{\mathbf{d}}}
\nc{\bbf}{{\mathbf{f}}}
\nc{\be}{{\mathbf{e}}}
\nc{\bi}{{\mathbf{i}}}
\nc{\bj}{{\mathbf{j}}}
\nc{\bn}{{\mathbf{n}}}
\nc{\bo}{{\mathbf{o}}}
\nc{\bp}{{\mathbf{p}}}
\nc{\bq}{{\mathbf{q}}}
\nc{\bu}{{\mathbf{u}}}
\nc{\bv}{{\mathbf{v}}}
\nc{\bx}{{\mathbf{x}}}
\nc{\bs}{{\mathbf{s}}}
\nc{\by}{{\mathbf{y}}}
\nc{\bw}{{\mathbf{w}}}
\nc{\bA}{{\mathbf{A}}}
\nc{\bK}{{\mathbf{K}}}
\nc{\bB}{{\mathbf{B}}}
\nc{\bC}{{\mathbf{C}}}
\nc{\bG}{{\mathbf{G}}}
\nc{\bD}{{\mathbf{D}}}
\nc{\bH}{{\mathbf{H}}}
\nc{\bM}{{\mathbf{M}}}
\nc{\bN}{{\mathbf{N}}}
\nc{\bO}{{\mathbf{O}}}
\nc{\bV}{{\mathbf{V}}}
\nc{\bW}{{\mathbf{W}}}
\nc{\bX}{{\mathbf{X}}}
\nc{\bZ}{{\mathbf{Z}}}
\nc{\bS}{{\mathbf{S}}}
\nc{\sA}{{\mathsf{A}}}
\nc{\sB}{{\mathsf{B}}}
\nc{\sC}{{\mathsf{C}}}
\nc{\sD}{{\mathsf{D}}}
\nc{\sF}{{\mathsf{F}}}
\nc{\sG}{{\mathsf{G}}}
\nc{\sH}{{\mathsf{H}}}
\nc{\sK}{{\mathsf{K}}}
\nc{\sk}{{\mathsf{k}}}
\nc{\sM}{{\mathsf{M}}}
\nc{\sO}{{\mathsf{O}}}
\nc{\sW}{{\mathsf{W}}}
\nc{\sQ}{{\mathsf{Q}}}
\nc{\sP}{{\mathsf{P}}}
\nc{\sR}{{\mathsf{R}}}
\nc{\sZ}{{\mathsf{Z}}}
\nc{\sfp}{{\mathsf{p}}}
\nc{\sfq}{{\mathsf{q}}}
\nc{\sr}{{\mathsf{r}}}
\nc{\bk}{{\mathsf{k}}}
\nc{\sg}{{\mathsf{g}}}
\nc{\sff}{{\mathsf{f}}}
\nc{\sfb}{{\mathsf{b}}}
\nc{\sfc}{{\mathsf{c}}}
\nc{\sd}{{\mathsf{d}}}
\nc{\BK}{{\bar{K}}}
\nc{\tA}{{\widetilde{\mathbf{A}}}}
\nc{\tB}{{\widetilde{\mathcal{B}}}}
\nc{\tg}{{\widetilde{\mathfrak{g}}}}
\nc{\tG}{{\widetilde{G}}}
\nc{\TM}{{\widetilde{\mathbb{M}}}{}}
\nc{\tO}{{\widetilde{\mathsf{O}}}{}}
\nc{\tU}{{\widetilde{\mathfrak{U}}}{}}
\nc{\TZ}{{\tilde{Z}}}
\nc{\tx}{{\tilde{x}}}
\nc{\tbv}{{\tilde{\bv}}}
\nc{\tfP}{{\widetilde{\mathfrak{P}}}{}}
\nc{\tz}{{\tilde{\zeta}}}
\nc{\tmu}{{\tilde{\mu}}}
\nc{\urho}{\underline{\rho}}
\nc{\uB}{\underline{B}}
\nc{\uC}{{\underline{\mathbb{C}}}}
\nc{\ui}{\underline{i}}
\nc{\uj}{\underline{j}}
\nc{\ofP}{{\overline{\mathfrak{P}}}}
\nc{\oB}{{\overline{\mathcal{B}}}}
\nc{\og}{{\overline{\mathfrak{g}}}}
\nc{\oI}{{\overline{I}}}
\nc{\eps}{\varepsilon}
\nc{\hrho}{{\hat{\rho}}}
\nc{\one}{{\mathbf{1}}}
\nc{\two}{{\mathbf{t}}}
\nc{\Rep}{{\mathop{\operatorname{\rm Rep}}}}
\nc{\Tot}{{\mathop{\operatorname{\rm Tot}}}}
\nc{\Ker}{{\mathop{\operatorname{\rm Ker}}}}
\nc{\im}{{\mathop{\operatorname{\rm Im}}}}
\nc{\Hilb}{{\mathop{\operatorname{\rm Hilb}}}}
\nc{\End}{{\mathop{\operatorname{\rm End}}}}
\nc{\Ext}{{\mathop{\operatorname{\rm Ext}}}}
\nc{\CHom}{{\mathop{\operatorname{{\mathcal{H}}\it om}}}}
\nc{\GL}{{\mathop{\operatorname{\rm GL}}}}
\nc{\gr}{{\mathop{\operatorname{\rm gr}}}}
\nc{\HN}{{\mathop{\operatorname{\rm HN}}}}
\nc{\Id}{{\mathop{\operatorname{\rm Id}}}}
\nc{\de}{{\mathop{\operatorname{\rm def}}}}
\nc{\length}{{\mathop{\operatorname{\rm length}}}}
\nc{\supp}{{\mathop{\operatorname{\rm supp}}}}
\nc{\Cliff}{{\mathsf{Cliff}}}
\nc{\Fl}{\on{Fl}}
\nc{\Fib}{{\mathsf{Fib}}}
\nc{\Coh}{{\on{Coh}}}
\nc{\QCoh}{{\on{QCoh}}}
\nc{\IndCoh}{{\on{IndCoh}}}
\nc{\FCoh}{{\mathsf{FCoh}}}
\nc{\reg}{{\text{\rm reg}}}
\nc{\cplus}{{\mathbf{C}_+}}
\nc{\cminus}{{\mathbf{C}_-}}
\nc{\cthree}{{\mathbf{C}_\bullet}}
\nc{\Qbar}{{\bar{Q}}}
\nc\Eis{\on{Eis}}
\nc\Eisb{\ol\Eis{}}
\nc\Eisr{\on{Eis}^{rat}{}}
\nc\wh{\widehat}
\nc{\Def}{\on{Def_{\check{\fb}}(E)}}
\nc{\barZ}{\overline{Z}{}}
\nc{\barbarZ}{\overline{\barZ}{}}
\nc{\barpi}{\overline\pi}
\nc{\barbarpi}{\overline\barpi}
\nc{\barpip}{\overline\pi{}^+}
\nc{\barpim}{\overline\pi{}^-}
\nc{\fq}{\mathfrak q}
\nc{\fqb}{\ol{\sfq}{}}
\nc{\fpb}{\ol{\sfp}{}}
\nc{\fpr}{{\sfp^{rat}}{}}
\nc{\fqr}{{\sfq^{rat}}{}}
\nc{\hattimes}{\wh\otimes}
\nc{\bh}{{\bar{h}}}
\nc{\bOmega}{{\overline{\Omega(\check \fn)}}}
\nc{\seq}[1]{\stackrel{#1}{\sim}}
\nc{\cT}{{\check{T}}}
\nc{\cG}{{\check{G}}}
\nc{\cM}{{\check{M}}}
\nc{\cB}{{\check{B}}}
\nc{\ct}{{\check{\mathfrak t}}}
\nc{\cg}{{\check{\fg}}}
\nc{\cb}{{\check{\fb}}}
\nc{\cn}{{\check{\fn}}}
\nc{\cLambda}{{\check\Lambda}}
\nc{\cla}{{\check\lambda}}
\nc{\cmu}{{\check\mu}}
\nc{\cnu}{{\check\nu}}
\nc{\ceta}{{\check\eta}}
\nc{\DefbE}{{\on{Def}_{\cB}(E_\cT)}}
\nc{\imathb}{{\ol{\imath}}}
\nc{\rlr}{\overset{\longrightarrow}{\underset{\longrightarrow}\longleftarrow}}
\nc{\oBun}{\overset{\circ}\Bun}
\nc{\LocSys}{\on{LocSys}}
\nc{\BunBbb}{\ol{\ol{Bun}}_B}
\nc{\BunBr}{\Bun_B^{rat}}
\nc{\BunBrsg}{\Bun_B^{rat,\on{s.g.}}}
\nc{\BunBrp}{\Bun_B^{rat,polar}}
\nc{\BunBrpbg}{\Bun_B^{rat,polar,\on{b.g.}}}
\nc{\BunBrpsg}{\Bun_B^{rat,polar,\on{s.g.}}}
\nc{\BunTrp}{\Bun_T^{rat,polar}}
\nc{\BunTrpbg}{\Bun_T^{rat,polar,\on{b.g.}}}
\nc{\BunTrpsg}{\Bun_T^{rat,polar,\on{s.g.}}}
\nc{\BunNr}{\Bun_N^{rat}}
\nc{\BunNre}{\Bun_N^{enh,rat}}
\nc{\BunTr}{\Bun_T^{rat}}
\nc{\Vect}{\on{Vect}}
\nc{\Whit}{\on{Whit}}
\nc{\CTb}{\ol{\on{CT}}}
\nc{\Ran}{\on{Ran}}
\nc{\CTr}{\on{CT}^{rat}{}}
\nc\jmathr{\jmath^{rat}{}}
\nc{\ux}{\underline{x}}
\nc{\clambda}{{\check\lambda}}
\nc{\calpha}{{\check\alpha}}
\nc{\ind}{{\mathbf{ind}}}
\nc{\oblv}{{\mathbf{oblv}}}
\nc{\ox}{{\overline{x}}}
\nc{\cLa}{\check{\Lambda}}
\nc{\StinftyCat}{\on{DGCat}}
\nc{\inftyCat}{\infty\on{-Cat}}
\nc{\inftygroup}{\infty\on{-Grpd}}
\nc{\Dmod}{\on{D-mod}}
\nc{\CMaps}{{\mathcal Maps}}
\nc{\Maps}{\on{Maps}}
\nc{\affSch}{\on{Sch}^{\on{aff}}}
\nc{\dr}{{\on{dR}}}
\nc{\oCY}{\overset{\circ}\CY}
\nc{\leqG}{\underset{G}\leq}
\nc{\leqM}{\underset{M}\leq}
\nc{\leqGad}{\underset{G_{ad}}\leq}
\nc{\leqMad}{\underset{M_{ad}}\leq}
\nc{\Tr}{\on{Tr}}
\nc{\Frob}{\on{Frob}}
\nc{\DGCat}{\on{DGCat}}
\nc{\tDGCat}{2\on{-DGCat}}
\nc{\ev}{\on{ev}}
\nc{\mmod}{\on{-}\mathbf{mod}}
\nc{\sotimes}{\overset{!}\otimes}
\nc{\Sht}{{\on{Sht}}}
\nc{\Res}{{\on{Res}}}
\nc{\Av}{{\on{Av}}}
\nc{\Ind}{{\on{Ind}}}
\nc{\coInd}{{\on{coInd}}}
\nc{\ul}{\underline}
\nc{\triv}{\mathbf{triv}}
\title[On the Casselman-Jacquet functor]{On the Casselman-Jacquet functor}
\author{T.-H.~Chen, D.~Gaitsgory and A.~Yom Din}
\begin{document}

\date{\today}

\dedicatory{Dedicated to J.~Bernstein} 

\begin{abstract}
We study the Casselman-Jacquet functor $J$, viewed as a functor from the (derived) category of $(\fg,K)$-modules
to the (derived) category of $(\fg,N^-)$-modules, $N^-$ is the negative maximal unipotent. We give
a functorial definition of $J$ as a certain right adjoint functor, and identify it as a composition of two averaging functors
$\Av^{N^-}_!\circ \Av^N_*$. We show that it is also isomorphic to the composition $\Av^{N^-}_*\circ \Av^N_!$. Our key tool
is the \emph{pseudo-identity} functor that acts on the (derived) category of (twisted) $D$-modules on an algebraic stack. 
\end{abstract}  

\maketitle

\tableofcontents

\section*{Introduction}

\ssec{The Casselman-Jacquet functor}

\sssec{}

Let $G$ be a real reductive algebraic group, $\fg$ the complexification of $\text{Lie}(G)$, $K$ the complexification of a maximal compact subgroup in $G(\mathbb{R})$, and $\fn , \fn^- \subset \fg$ the complexifications of the Lie algebras of the unipotent radicals of opposite minimal parabolics in $G$. 
Let $(\fg,K)\mod$ denote the corresponding category of $(\fg,K)$-modules. Recall that a Harish-Chandra module is a $(\fg,K)$-module that
is of finite length (equivalently, finitely generated and acted on locally finitely by the center of $U(\fg)$). 

\medskip

In his work on representations of real reductive groups, W.A.~Casselman introduced a remarkable functor on the category of
Harish-Chandra modules: it is defined by the formula
\begin{equation} \label{e:CJ}
\CM\mapsto \wh{J}(\CM):=\underset{k}{\underset{\longleftarrow}{\on{lim}}}\, \CM/\fn^k\cdot M,
\end{equation} 
where $\fn$ is the unipotent radical of a minimal parabolic.

\medskip

A key property of the functor $\wh{J}$ is that it is \emph{exact} and \emph{conservative}; this provided a new tool for the study of 
the category of Harish-Chandra modules, leading to an array of powerful results. 

\sssec{}

The functor \eqref{e:CJ} has an algebraic cousin, denoted $J$, and defined as follows. 

\medskip

Pick a cocharacter $\BG_m \to G$ that is dominant and regular in the split Cartan,  
and let $J(\CM)$ be the subset of $\wh{J}(\CM)$
on which $\BA^1=\on{Lie}(\BG_m)$ acts locally finitely, i.e., the direct sum of generalized eigenspaces with respect to the generator
$t\in \BA^1$:
$$J(\CM)\simeq \underset{\lambda}\oplus\, J(\CM)_\lambda:=\underset{\lambda}\oplus\, \wh{J}(\CM)_\lambda.$$

One shows that the entire $\wh{J}(\CM)$ can be recovered as 
$$\underset{\lambda}\Pi\, J(\CM)_\lambda,$$
so the information contained in $J$ is more or less equivalent to that possessed by $\wh{J}$. In particular,
the functor $J$ is also exact. 

\medskip

We will refer to $J$ as the \emph{Casselman-Jacquet} functor. 

\sssec{}  \label{sss:n- finite}

An important feature of the functor $J$, and one relevant to this paper, is that it can be extracted from $\wh{J}$
using the Lie algebra $\fn^-$ (the unipotent radical of the opposite parabolic) rather than the split Cartan.

\medskip

Namely, an elementary argument shows that $J(\CM)$ can be identified with the
subset of vectors in $\wh{J}(\CM)$ on which $\fn^-$ acts locally nilpotently.

\medskip

Thus, we can think of $J$ as a functor
$$(\fg,K)\mod_\chi \to (\fg,M_K\cdot N^-)\mod_\chi,$$
where our notations are as follows:

\begin{itemize}

\item $(\fg,K)\mod$ denotes the abelian category of $(\fg,K)$-modules ($K$ is the algebraic group corresponding to the maximal compact);

\item $(\fg,M_K\cdot N^-)\mod$ denotes the abelian category of $(\fg,M_K\cdot N^-)$-modules 
($N^-$ and $M_K$ are the algebraic groups corresponding to the
(opposite) maximal unipotent and compact part of the Levi, respectively);

\item The subscript $\chi$ indicates that we are considering categories of modules with a fixed central character $\chi$.

\end{itemize} 

\sssec{Our goals}

The primary goals of the present paper are as follows:

\medskip

\noindent--Extend the definitions of the functors $\wh{J}$ and $J$ from the abelian categories to the corresponding derived categories
$\fg\mod^K_\chi$, $\fg\mod^{M_K\cdot N^-}_\chi$, etc., (and in particular, explain their functorial meaning);

\medskip

\noindent--Express the functor $J$ as a double-averaging functor, and thus reprove the corresponding result from the paper \cite{CY}, where it
was obtained by interpreting $J$ via nearby cycles using \cite{ENV};

\medskip

\noindent--Record a conjecture that states that the functor $J$ is (up to some twist) the \emph{right} adjoint of the functor of \emph{averaging}
with respect to $K$:
$$\on{Av}^{K/M_K}_*:\fg\mod^{M_K\cdot N^-}_\chi\to \fg\mod^K_\chi,$$
and explain that this is analogous to Bernstein's ``2nd adjointness theorem" for $\fp$-adic groups. 

\sssec{}

In the course of realizing these goals we will encounter another operation of interest: Drinfeld's pseudo-identity functor 
on the category of $M_K\cdot N$-equivariant (twisted) D-modules on the flag variety $X$. 

\medskip

This functor will be used in the
proofs of the main results, and as such, may seem to be not more than a trick. However, in the sequel to this paper 
it will be explained that this functor plays a conceptual role at the categorical level. 

\ssec{Functorial interpretation of the Casselman-Jacquet functor}

\sssec{}

We first give a functorial interpretation of the (derived version of the) functor $\wh{J}$. 

\medskip

Namely, in Sects. \ref{ss:completion} and \ref{ss:ident}, we show that (the derived) 
version of this functor identifies with the composition
$$\fg\mod_\chi \overset{\Av^N_*}\longrightarrow \fg\mod_\chi^N \overset{(\Av^N_*)^R}\longrightarrow \fg\mod_\chi.$$

Here $\Av^N_*$ is the functor of *-averaging with respect to $N$, i.e., the right adjoint to the forgetful functor
\begin{equation} \label{e:intro oblv}
\oblv_N:\fg\mod_\chi^N\to \fg\mod_\chi,
\end{equation} 
and $(\Av^N_*)^R$ is the (a priori, discontinuous) \emph{right} adjoint of $\Av^N_*$. 

\medskip

In fact, we show this in a rather general situation when instead of $\fg\mod_\chi$ we consider the category $A\mod$,
where $A$ is an associative algebra, equipped with a \emph{Harish-Chandra structure} with respect to $N$ (see
\secref{sss:H-Ch algebra} for what this means). 

\sssec{}

Next, we consider the functor $J$, in the general setting of a category $\CC$ equipped with an action of $G$
(for example, for $\CC=A\mod$ for an associative algebra $A$, equipped with a Harish-Chandra structure
with respect to all of $G$), see \secref{sss:action} for what this means. 

\medskip

We define the (derived version of the) functor $J$ as the composite
$$J:=\Av^{N^-}_*\circ (\Av^N_*)^R\circ \Av^N_*: \CC^{M_K}\to \CC^{M_K\cdot N^-}.$$

Assume that the following property is satisfied (which is the case for $\CC=\fg\mod_\chi$ or $\CC$ being the category $\Dmod_\lambda(X)$
of twisted D-modules on the flag variety):

\medskip

\noindent(*) {\it The ``long intertwining functor" 
\begin{equation} \label{e:intro Ups}
\Upsilon:=\Av^N_*\circ \oblv_{N^-}:\CC^{M_K\cdot N^-}\to \CC^{M_K\cdot N},
\end{equation} 
given by forgetting $N^-$-equivariance and then averaging with respect to $N$, is an equivalence.}

\medskip

In this case we show (see \propref{p:main} and its variant in \secref{sss:modify}) that we have a canonical isomorphism of functors
\begin{equation} \label{e:J !*}
J\simeq \Av^{N^-}_!\circ \Av^N_*.
\end{equation} 

In the above formula, $\Av^{N^-}_!$ is the !-averaging functor with respect to $N^-$, i.e., the \emph{left} adjoint to
\eqref{e:intro oblv} (with $N$ replaced by $N^-$). 

\medskip

The isomorphism \eqref{e:J !*} had been initially obtained in \cite{CY}; we will comment on that in \secref{sss:CY} below. 

\sssec{}

Finally, in the particular case of $\CC=\fg\mod_\chi$ we will show (see \thmref{t:J self-dual on g}
and its variant in \secref{sss:modify}) that $J$ is canonically isomorphic to
its Verdier dual functor
\begin{equation} \label{e:J *!}
J\simeq \Av^{N^-}_*\circ \Av^N_!,
\end{equation} 
when applied to objects in $\fg\mod^{M_K}_\chi$ whose cohomologies are finitely generated over $\fn$ (or are direct limits of such).

\medskip

We will deduce \eqref{e:J *!} from a similar statement for $\CC=\Dmod_\lambda(X)$  (see \thmref{t:J self-dual on X} and its variant in \secref{sss:modify}),
where the isomorphism in question holds for objects on $\Dmod_\lambda(X)^{M_K}$ that are ULA with respect to the projection $X\to N\backslash X$ 
(see \secref{sss:ULA} for what this means). 

\medskip

The isomorphism \eqref{e:J *!} implies that the functor $J$ is t-exact when applied to $\fg\mod_\chi^K$. In particular, it shows that the procedure 
in \secref{sss:n- finite} \emph{does not omit higher cohomologies} (in principle, the functor of taking $\fn^-$-locally nilpotent vectors 
should be derived). 

\sssec{}  \label{sss:CY}

The isomorphism \eqref{e:J *!}, applied to objects from $\fg\mod_\chi^K$, 
had been obtained in \cite{CY} as a combination of the following two results:

\medskip

\noindent--One is the main theorem of \cite{ENV} that shows that under the localization equivalence (for this one assumes that $\chi$ is regular),
the functor $J$ (defined as in \secref{sss:n- finite}) corresponds to a certain nearby cycles functor 
$$\Psi:\Dmod_\lambda(X)^K\to \Dmod_\lambda(X)^{M_K\cdot N^-}.$$
This was done by explicitly analyzing the V-filtration on the corresponding D-module. 

\medskip

\noindent--The other is the key result from the paper \cite{CY} itself, which establishes an isomorphism
$$\Psi\simeq \Av^{N^-}_*\circ \Av^N_!,$$
again on $\Dmod_\lambda(X)^K$;

\medskip

The isomorphism \eqref{e:J !*} was then deduced from \eqref{e:J *!} using the Verdier self-duality property of the nearby cycles functor 
$\Psi$, which implies that
$$\Av^{N^-}_*\circ \Av^N_!\simeq \Psi\simeq \Av^{N^-}_!\circ \Av^N_*.$$

\medskip

So, the present paper gives another, in a sense more direct proof of \eqref{e:J !*} and \eqref{e:J *!}, which does not appeal to the
nearby cycles functor (however, we do \emph{not} to imply that the latter is irrelevant: see \secref{sss:rel to BK}). 

\ssec{The pseudo-identity functor and the ULA condition}

\sssec{}

The pseudo-identity functor is a certain canonical endofunctor of the category of (twisted) D-modules on any algebraic stack, denoted
$$\on{Ps-Id}_\CY:\Dmod_\lambda(\CY)\to \Dmod_\lambda(\CY),$$
see \secref{ss:Psi}. Its definition was suggested by V.~Drinfeld and was recorded in \cite{Ga1}. 

\medskip

This functor is \emph{uninteresting} (equals to the identity functor up to a shift) when $\CY$ is a smooth separated scheme,
but has some very interesting properties on certain algebraic stacks that appear in geometric representation theory, see, e.g., 
\cite{Ga3}. 

\sssec{}

In this paper we apply this functor to the stack $\CY$ equal to $H\backslash X$, where $X$ a proper scheme acted on
by an algebraic group $H$ (in our applications, $X$ will be the flag variety of $G$ and $H=M_K\cdot N$). 

\medskip

We prove the following result (see \thmref{t:pseudo-id and av}):

\begin{thm}  \label{t:intertw}
Let $f$ denote the projection $X\to H\backslash X$. Then the functors
$$f_![2\dim(H)] \text{ and } \on{Ps-Id}_{H\backslash X}\circ f_*[2\dim(X)]$$
are canonically isomorphic when evaluated on objects of $\Dmod_\lambda(X)$ that are ULA with respect to $f$.
\end{thm}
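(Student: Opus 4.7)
The plan is to unwind $\on{Ps-Id}_{H\backslash X}$ via its definition as a kernel functor associated to the diagonal of $\CY := H\backslash X$, transport the resulting expression down to the scheme $X$ through the Cartesian square
\[
\begin{CD}
H\times X @>\on{act}>> X \\
@V\on{pr}VV @VfVV \\
X @>f>> \CY,
\end{CD}
\]
and then invoke the ULA hypothesis on $\CF$ to interchange the relevant $!$- and $*$-pushforwards along the group direction.

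Concretely, using the presentation $\on{Ps-Id}_\CY \simeq (p_2)_*\circ(\Delta_\CY)_!$ (associated to the kernel $(\Delta_\CY)_!\omega_\CY$ on $\CY\times\CY$), the right-hand side of the theorem becomes $(p_2)_*(\Delta_\CY)_!(f_*\CF)[2\dim X]$. Base-changing $(\Delta_\CY)_!$ along $(f\times f)\colon X\times X \to \CY\times\CY$ identifies the fiber product $\CY\times_{\CY\times\CY}(X\times X)$ with the action groupoid $H\times X$, and standard base-change then rewrites $\on{Ps-Id}_\CY\circ f_*$ as an explicit composition built from $\on{act}_{?}$, $\on{pr}^{?}$ and $f_*$ living on $X$ and $H\times X$.

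The decisive step is the application of the ULA property. For $\CF$ ULA with respect to $f$, the canonical arrow relating $\on{act}_!$ and $\on{act}_*$ (equivalently $\on{pr}^*$ and $\on{pr}^!$) becomes an isomorphism up to the expected shift by $[2\dim H]$ on the classes of objects arising from $\CF$; this is essentially a reformulation of the ULA condition along the $H$-fibers of $f$. Substituting this isomorphism collapses the expression into $f_!\CF$, and tracking the cohomological shifts gives precisely the discrepancy $[2\dim X]-[2\dim H]$ between the two sides of the theorem.

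The main technical obstacle is bookkeeping: one must verify that the smooth morphism $f$ (of relative dimension $\dim H$) contributes exactly the shift $[2\dim H]$ through its relative dualizing complex, and that the pseudo-identity on $\CY$ differs from $\on{Id}$ in the way dictated by $[2\dim X]$. A secondary subtlety is the non-properness of $f$, which prevents one from naively identifying $f_!$ with $f_*$; this is controlled by the ULA hypothesis, which precisely forces the relevant $!$- and $*$-pushforwards along the $H$-fibers to agree. Smooth descent along $f$ reduces the base-change steps to ordinary scheme-theoretic base change for the square above, completing the argument.
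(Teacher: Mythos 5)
There is a genuine gap: your argument nowhere uses the properness of $X$, which is a standing hypothesis and without which the theorem is false. You claim that the ULA hypothesis ``forces the relevant $!$- and $*$-pushforwards along the $H$-fibers to agree,'' but ULA gives no such identification. Concretely, take $X=\BA^1$, $H=\BG_a$ acting by translation, so $\CY=\on{pt}$ and $\on{Ps-Id}_{\on{pt}}=\on{Id}$; the constant sheaf $k_{\BA^1}$ is ULA with respect to $f:\BA^1\to\on{pt}$ (the vertical differential operators are then all of $\on{D}$), yet $f_!(k_{\BA^1})[2]\not\simeq f_*(k_{\BA^1})[2]$. The mechanism you describe would therefore break.

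The actual argument has a different shape, and the ULA input enters at a different place. Both sides of the asserted isomorphism are rewritten as pushforwards along $\wt p_2:X\times H\backslash X\to H\backslash X$ of tensor products with the kernel $(\wt\Delta_{H\backslash X})_!(k_X)$, by base changing the diagonal kernel along $f\times\on{id}$ --- not along $f\times f$ to the action groupoid $H\times X$, as you suggest; that choice would require a further smooth-descent step to land back on $\CY$. Then \emph{properness of $X$} makes $\wt p_2$ proper, giving $(\wt p_2)_!\simeq(\wt p_2)_*$, and smoothness of $\wt p_1$ relates $\wt p_1^*$ to $\wt p_1^!$ by a shift. Only after all this does the ULA hypothesis appear, and it does so not to compare pushforwards but to upgrade the canonical map
$$(\wt\Delta_{H\backslash X})_!(k_X)\overset{*}\otimes\wt p_1^!(\CF)\longrightarrow (\wt\Delta_{H\backslash X})_!(k_X)\sotimes\wt p_1^!(\CF)[2(2\dim(X)-\dim(H))]$$
to an isomorphism: since the kernel is pulled back from $H\backslash X\times H\backslash X$ along the smooth map $f\times\on{id}$, and $\wt p_1^!(\CF)$ is ULA over $f\times\on{id}$, the $*$- and $!$-tensor products agree by the lemma of \cite[Sect. 2.3]{Ga1}. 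Finally, your parenthetical ``equivalently $\on{pr}^*$ and $\on{pr}^!$'' is not quite right: $\on{pr}$ is smooth, so $\on{pr}^*$ and $\on{pr}^!$ always differ by a shift, with no ULA input required.
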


In other words, this theorem says that the functor $\on{Ps-Id}_{H\backslash X}$ intertwines the !- and *- direct images along $f$.

\sssec{}

From \thmref{t:intertw} we deduce:

\begin{cor} \label{c:intertw} For $X$ being the flag variety of $G$,  
the functor $\on{Ps-Id}_{M_K\cdot N\backslash X}$ induces a self-equivalence of $\Dmod_\lambda(M_K\cdot N\backslash X)$; moreover, this
self-equivalence is canonically isomorphic (up to a shift) to the composition
$$\Dmod_\lambda(M_K\cdot N\backslash X)\overset{\Upsilon^{-1}}\longrightarrow 
\Dmod_\lambda(M_K\cdot N^-\backslash X) \overset{(\Upsilon^{-})^{-1}}\longrightarrow \Dmod_\lambda(M_K\cdot N\backslash X).$$
where $\Upsilon$ is the long intertwining functor of \eqref{e:intro Ups}, and $\Upsilon^-$ is the analogous functor where the roles of $N$ and $N^-$ are swapped.
\end{cor}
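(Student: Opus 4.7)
The plan is to deduce the corollary from \thmref{t:intertw} applied to the two opposite projections $f\colon X\to M_K\cdot N\backslash X$ and $f^-\colon X\to M_K\cdot N^-\backslash X$ from the flag variety. Both maps are smooth torsors of the same relative dimension $\dim H=\dim H^-$ (where $H=M_K\cdot N$, $H^-=M_K\cdot N^-$), so any object pulled back via $f^!$ or $(f^-)^!$ is automatically ULA with respect to the corresponding projection. Since $\Upsilon$ and $\Upsilon^-$ are equivalences for the flag variety (by hypothesis $(\ast)$), it will suffice to produce the canonical isomorphism; the self-equivalence statement then follows formally.

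\thmref{t:intertw} applied to $f$ gives, on $f$-ULA objects $\CF$, the identity
$$\on{Ps-Id}_{H\backslash X}\bigl(f_*(\CF)\bigr)\simeq f_!(\CF)\bigl[2\dim H-2\dim X\bigr].$$
The strategy is to choose, for each $\CG\in\Dmod_\lambda(H\backslash X)$, an $f$-ULA object $\CF_\CG$ such that $f_*(\CF_\CG)\simeq\CG$ and $f_!(\CF_\CG)\simeq(\Upsilon^-)^{-1}\bigl(\Upsilon^{-1}(\CG)\bigr)$, each up to a matching shift. Substitution then delivers the corollary.

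The natural candidate is
$$\CF_\CG:=(f^-)^!\bigl(\Upsilon^{-1}(\CG)\bigr),$$
which is $f^-$-ULA since $f^-$ is a smooth torsor, and is also $f$-ULA by the transversality of the $N$- and $N^-$-orbit structures on $X=G/B$. Unwinding $\Upsilon=\Av^N_*\circ\oblv_{N^-}$ via the factorization $X\xrightarrow{g}[X/M_K]\xrightarrow{p}[X/H]$ and its analogue $f^-=p^-\circ g$, and using the dictionary $\oblv_N\leftrightarrow p^!$, $\Av^N_*\leftrightarrow p_*$, $\oblv_{N^-}\leftrightarrow(p^-)^!$, $\Av^{N^-}_*\leftrightarrow p^-_*$ (all up to a common shift), one computes $f_*(\CF_\CG)\simeq\Upsilon(\Upsilon^{-1}(\CG))\simeq\CG$ up to shift, while $f_!(\CF_\CG)$ is identified, via the base-change passage between $*$-pushforward and $!$-pushforward along $f^-$ combined with the inverse long intertwining functor, with $(\Upsilon^-)^{-1}(\Upsilon^{-1}(\CG))$.

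The main obstacle is the verification of the two identifications in the preceding paragraph, and in particular the claim that $\CF_\CG$ is genuinely $f$-ULA. This uses the fact specific to the flag variety that the $H$- and $H^-$-orbits are transverse, so pullback from one of the two quotient stacks produces an object smooth along the fibers of the other projection. A secondary bookkeeping difficulty is tracking the accumulated cohomological shifts contributed by $\dim H$, $\dim X$, the shifts in $\oblv_N\leftrightarrow p^![\cdot]$ and $\oblv_{N^-}\leftrightarrow(p^-)^![\cdot]$, and the Verdier-type passage between $*$- and $!$-functors on the torsor $f^-$, so that they collapse to a single overall shift as claimed in the statement.
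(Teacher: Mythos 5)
Your overall strategy matches the paper's: use the transversality of the $H$- and $H^-$-orbit structures on $X$ to get ULA, then plug an object of the form "$(f^-)^!$ of something" into \thmref{t:intertw} and simplify via the inverse long intertwining functors. This is exactly how the paper proves the $M_K=\{1\}$ version, \thmref{t:serre}. However, there is a genuine gap for the statement as posed, which concerns nontrivial $M_K$.

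The problem is the leg through $X$ itself. Writing $H=M_K\cdot N$, $H^-=M_K\cdot N^-$, and factoring $f=p\circ g$, $f^-=p^-\circ g$ with $g\colon X\to M_K\backslash X$ and $p\colon M_K\backslash X\to H\backslash X$, your candidate gives
$$f_*(\CF_\CG)\;=\;p_*\,g_*\,g^!\,(p^-)^!\bigl(\Upsilon^{-1}(\CG)\bigr).$$
The middle composition $g_*g^!$ is (up to a cohomological shift) the co-monad $\Av^{M_K}_*\circ\oblv_{M_K}$, and as recalled in \secref{sss:averaging}ff., for a reductive group this is tensoring with $\on{C}^*_{\dr}(M_K)$, which is an exterior algebra and \emph{not} a shift of $k$. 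So $f_*(\CF_\CG)\not\simeq\CG$; you get $\CG\otimes\on{C}^*_{\dr}(M_K)$ (up to shift). The analogous problem occurs in $f_!(\CF_\CG)$, where $g_!\,g^!$ contributes $\on{C}_*^{\dr}(M_K)$, which is the linear dual but lives in different degrees. These extra tensor factors do not cancel against each other, and the ``common shift'' accounting cannot absorb them. Note that they are also exactly what the $-\dim(M_K)$ term in the shift of the paper's \thmref{t:serre K} is tracking, so the shift in your claimed identity would also be wrong.

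The fix is to stay on the stack $M_K\backslash X$ instead of descending to $X$. Apply the \emph{relative} version \thmref{t:pseudo-id and av rel} (with $H'=M_K$, $H=M_K\cdot N$) to the projection $p\colon M_K\backslash X\to M_K\cdot N\backslash X$, and take
$$\CF'_\CG\;:=\;(p^-)^!\bigl(\Upsilon^{-1}(\CG)\bigr)\;=\;\oblv_{M_K\cdot N^-/M_K}\bigl(\Upsilon^{-1}(\CG)\bigr)\ \in\ \Dmod_\lambda(M_K\backslash X).$$
Then $p_*(\CF'_\CG)=\Av^{M_K\cdot N/M_K}_*\circ\oblv_{M_K\cdot N^-/M_K}\bigl(\Upsilon^{-1}(\CG)\bigr)=\Upsilon\Upsilon^{-1}(\CG)\simeq\CG$ as desired, with no spurious $\on{C}^*_{\dr}(M_K)$ factor, and similarly $\Av^{M_K\cdot N^-/M_K}_!\circ\oblv_{M_K\cdot N/M_K}$ recovers $(\Upsilon^-)^{-1}$. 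For the ULA input you do need to upgrade from \propref{p:equiv ULA} to its relative analog, i.e.\ that $\oblv_{M_K\cdot N^-/M_K}$ of a compact object is ULA with respect to $M_K\backslash X\to M_K\cdot N\backslash X$; this follows from \lemref{l:parab orbits}(b) and \propref{p:trans} together with the smooth-locality of the ULA condition on the source (pull back along $g$ and use transversality on $X$). This is the route the paper takes, via \thmref{t:serre K}. Your argument is essentially correct in the Borel case ($M_K$ trivial), but does not cover the symmetric-pair case of the corollary as stated without passing to the relative version.
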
 

\sssec{}

The application of the functor $\on{Ps-Id}_{M_K\cdot N\backslash X}$ in this paper is the following one: 

\medskip

Combining \corref{c:intertw} and \thmref{t:intertw} we obtain an isomorphism of functors
\begin{equation} \label{e:self-dual intro X}
\Av^{N^-}_*\circ \Av^N_! \text{ and } \Av^{N^-}_!\circ \Av^N_*:
\Dmod_\lambda(M_K\backslash X)\rightrightarrows \Dmod_\lambda(M_K\cdot N^-\backslash X),
\end{equation} 
on the subcategory objects of $\Dmod_\lambda(M_K\backslash X)$ that are ULA with respect to the
projection
$M_K\backslash X\to M_K\cdot N\backslash X$ (or are direct limits of such).

\sssec{}

Finally, let us comment on the relationship between the isomorphism of functors \eqref{e:self-dual intro X} and the isomorphism
\begin{equation} \label{e:self-dual intro g}
\Av^{N^-}_*\circ \Av^N_! \text{ and } \Av^{N^-}_!\circ \Av^N_*:\fg\mod_\chi^{M_K}\rightrightarrows \fg\mod_\chi^{M_K\cdot N^-},
\end{equation} 
on the subcategory consisting of modules which are finitely generated over $\fn$ (or are direct limits of such).

\medskip

The point is that the isomorphisms \eqref{e:self-dual intro X} and \eqref{e:self-dual intro g} are logically equivalent, 
using the following observation (\propref{p:Loc and ULA}): 

\medskip

\noindent The functors
$$\on{Loc}: \fg\mod_\chi\rightleftarrows \Dmod_\lambda(X):\Gamma(X,-)$$
map the corresponding subcategories to one-another. 

\ssec{The ``2nd adjointness" conjecture}

\sssec{}

Let us consider the ``principal series" functor in the context of $(\fg,K)$-modules. We stipulate this to be the functor
$$\fg\mod_\chi^{M_K\cdot N^-}\overset{\oblv_{N^-}}\longrightarrow \fg\mod_\chi^{M_K} 
\overset{\Av^{K/M_K}_*}\longrightarrow \fg\mod_\chi^K.$$

\medskip

Tautologically, this functor is the \emph{right} adjoint of the functor
$$\fg\mod_\chi^K \overset{\oblv^{K/M_K}_*}\longrightarrow \fg\mod_\chi^{M_K}  \overset{\Av^{N^-}_!}\longrightarrow \fg\mod_\chi^{M_K\cdot N^-}.$$

A priori, it is not clear clear that $\Av^{K/M_K}_*\circ \oblv_{N^-}$ itself should admit a \emph{right} adjoint given by a nice formula.
However, based on the analogy with Bernstein's 2nd adjointness theorem (see \secref{ss:2nd adj}) we propose the following
conjecture:

\begin{conj} \label{c:intro 2nd adj}
The functor $J\circ \oblv_{K/M_K}$ (up to a shift) provides a right adjoint to the principal series functor
$\Av^{K/M_K}_*\circ \oblv_{N^-}$. 
\end{conj}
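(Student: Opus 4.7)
The plan is to reduce \conjref{c:intro 2nd adj} to a geometric statement on the flag variety $X = G/B$ via Beilinson-Bernstein localization, and then exploit the self-duality of $J$ established in this paper together with the pseudo-identity machinery of \secref{ss:Psi}. By general adjunction the left adjoint of $\Av^{K/M_K}_*\circ \oblv_{N^-}$ is tautologically $\Av^{N^-}_!\circ \oblv_{K/M_K}$, so the conjecture asserts that the corresponding \emph{right} adjoint exists and, via the identification $J \simeq \Av^{N^-}_!\circ \Av^N_*$ of \eqref{e:J !*}, differs from the left adjoint by the insertion of the $*$-averaging $\Av^N_*$.

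First I would set up the conjectural unit/counit. Using the form $J \simeq \Av^{N^-}_*\circ \Av^N_!$ of \eqref{e:J *!} on the appropriate subcategory (objects satisfying the finiteness/ULA conditions), combined with the $(\oblv_{N^-}, \Av^{N^-}_*)$- and $(\oblv_{K/M_K}, \Av^{K/M_K}_*)$-adjunctions, for $M \in \CC^{M_K\cdot N^-}$ and $N \in \CC^K$ one obtains
\begin{align*}
\Hom(\Av^{K/M_K}_*\circ \oblv_{N^-}(M), N) &\simeq \Hom_{\CC^{M_K}}(\oblv_{N^-}(M), \oblv_{K/M_K}(N)), \\
\Hom(M, J\circ \oblv_{K/M_K}(N)) &\simeq \Hom_{\CC^{M_K}}(\oblv_{N^-}(M), \Av^N_!\circ \oblv_{K/M_K}(N)),
\end{align*}
where the second right-hand side silently uses the forgetful from $\CC^{M_K\cdot N}$ to $\CC^{M_K}$. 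The conjecture is thus equivalent to producing a canonical isomorphism between these two $\Hom$ spaces, up to the expected dimension shift.

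Via Beilinson-Bernstein localization $\fg\mod_\chi \simeq \Dmod_\lambda(X)$ for $\chi$ regular (with a Koszul/Soergel-type extension to singular $\chi$), this reduces to a geometric statement: for a $K$-equivariant twisted D-module $\CF$ on $X$, the canonical morphism $\CF \to \oblv_N \circ \Av^N_!(\CF)[d]$ becomes an isomorphism after pairing with any $M_K\cdot N^-$-equivariant object. Heuristically this is the categorical shadow of the Iwasawa decomposition $G = KAN$: the open $K$-orbit meets the big $N$-cell in a dense open, so that the $K$-equivariance of $\CF$ effectively supplies the $N$-averaging automatically, up to the shift $d = \dim(K/M_K)$.

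The main obstacle is to make this transversality precise at the D-module level. The natural tool is the pseudo-identity functor of \secref{ss:Psi} applied now to the stack $K\backslash X$: in direct analogy with \thmref{t:intertw}, one expects $\on{Ps-Id}_{K\backslash X}$ to intertwine $!$- and $*$-pushforwards along $X \to K\backslash X$ on an appropriate ULA subcategory (ULA against the $K$-orbit stratification, which by Matsuki duality is ``transverse" to the $M_K\cdot N$-stratification). Composing this with \thmref{t:intertw} for $M_K\cdot N\backslash X$ via Matsuki duality should produce the desired natural transformation of functors and simultaneously identify the correct shift. Carrying out this composition and isolating the precise ULA/finiteness hypotheses under which it genuinely yields an equivalence of $\Hom$ spaces (rather than just a natural transformation) is the nontrivial content left open by the conjecture; this is also the categorical counterpart of the most delicate step in Bernstein's proof of 2nd adjointness for $p$-adic groups.
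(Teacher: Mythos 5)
This statement is an open \emph{conjecture}, not a theorem of the paper: the authors state explicitly, a few lines below where the conjecture is introduced, that ``it is not clear to the authors how to write down either the unit or the counit for the conjectural adjunction,'' and they merely suggest (without carrying out) an adaptation of the ideas of \cite{BK}. So there is no ``paper's proof'' to compare your proposal against; the most the paper does is reformulate the conjecture in several equivalent ways (Conjectures \ref{c:main}, \ref{c:funct eq}, \ref{c:Jacquet and J}, \ref{c:2nd adj}) and record a supporting identity (\corref{c:Av K}). Your concluding sentence rightly acknowledges that the crucial step is left open, so your text reads as a plausible plan of attack rather than a proof, which is appropriate for a conjecture.

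That said, there is a concrete error in the reduction you perform before that point. Your first displayed isomorphism,
\begin{equation*}
\Hom\bigl(\Av^{K/M_K}_*\circ \oblv_{N^-}(M),\,N\bigr)\;\simeq\;\Hom_{\CC^{M_K}}\bigl(\oblv_{N^-}(M),\,\oblv_{K/M_K}(N)\bigr),
\end{equation*}
is not an instance of the $(\oblv_{K/M_K},\Av^{K/M_K}_*)$-adjunction: in the paper's conventions $\oblv_{K/M_K}$ is the \emph{left} adjoint and $\Av^{K/M_K}_*$ the \emph{right} adjoint, so the adjunction only simplifies $\Hom$ \emph{into} $\Av^{K/M_K}_*$, not out of it. Since $K/M_K$ is not unipotent, there is no shift identity making $\Av^{K/M_K}_*$ simultaneously a left adjoint; the displayed isomorphism is precisely the nontrivial content of the second-adjointness conjecture, so invoking it as a ``general adjunction'' makes the argument circular. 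The second Hom identity you write is fine (modulo a silent $\oblv_N$), but without the first, the two sides have not been reduced to anything comparable. The subsequent heuristics (Iwasawa decomposition, Matsuki transversality, a conjectural version of \thmref{t:intertw} for $K\backslash X$) are in the spirit of what the paper itself suggests, but they would have to produce the missing unit/counit, which is exactly what the authors say they do not know how to do.
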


In the sequel to this paper further evidence towards the validity of 
\conjref{c:intro 2nd adj} will be provided, and the logical equivalence 
between \conjref{c:intro 2nd adj} and \cite[Conjectures 9.1.4, 9.1.6]{Yo} 
will be explained. 

\medskip

In addition to \conjref{c:intro 2nd adj}, we make a similar conjecture when the category $\fg\mod_\chi$ is replaced by $\Dmod_\lambda(X)$.

\sssec{}  \label{sss:rel to BK} 

At the moment, it is not clear to the authors how to write down either the unit or the counit for the conjectural adjunction between 
$J\circ \oblv_{K/M_K}$ or $\Av^{K/M_K}_*\circ \oblv_{N^-}$, either in the context of $\fg\mod_\chi$ or in that of $\Dmod_\lambda(X)$.

\medskip 

The following, however, seems very tempting: 

\medskip

In the paper \cite{BK} it is explained that the in the context of $\fp$-adic groups, Bernstein's 2nd adjointness can be obtained by
analyzing the \emph{wonderful degeneration} of $G$, i.e., the geometry of the \emph{wonderful compactification} near 
the stratum of the boundary corresponding to the given parabolic. 

\medskip

Now, as was mentioned above, one of the main results of \cite{CY} says that the functor $J$ for $\Dmod_\lambda(X)$
is isomorphic to the nearby cycles functor along the same wonderful degeneration. 

\medskip

So it would be very nice to adapt the ideas 
of \cite{BK} to prove \conjref{c:intro 2nd adj}. However, so far, we do not know how to carry this out. 

\ssec{Organization of the paper}

\sssec{}

The main body of the paper starts with \secref{s:recall} where we recall (but also reprove and supply proofs that we could not find
in the literature) the following topics:

\medskip

\noindent--The notion of action of an algebraic group on a (DG) category; the associated notions of equivariance and the *- and !-averaging 
functors;

\medskip

\noindent--The Beilinson-Bernstein localization theory;

\medskip

\noindent--Translation functors;

\medskip

\noindent--The long intertwining functor between $N$- and $N^-$-equivariant categories (either for $\fg$-modules, or D-modules on the
flag variety). 

\medskip

The reader may consider skipping this section on the first pass, and return to it when necessary.

\sssec{}

In \secref{s:J} we initiate the study of the Casselman-Jacquet functor. However, in order to simplify the exposition, in this section
instead of working with a real reductive group (or the corresponding symmetric pair), we work in a completely algebraic situation.

\medskip

I.e., in this section we take $N$ to be a maximal unipotent subgroup in a reductive group $G$, and consider the Casselman-Jacquet
functor $J$ as a functor
$$\fg\mod_\chi\to \fg\mod_\chi^N.$$ 

\noindent (Analogous results in the case of symmetric pairs require very minor modifications, which will be explained in 
\secref{sss:modify}). 

\medskip

\noindent--We define the functor $J$ (in the context of a category $\CC$ acted on by $G$) as the composition
$$\Av^{N^-}_*\circ (\Av^N_*)^R\circ \Av^N_*: \CC \to \CC^{N^-}.$$

\medskip

\noindent--We show that for $\CC=A\mod$ (for an associative algebra $A$ equipped with a Harish--Chandra structure with respect to $G$),
the functor 
$$(\Av^N_*)^R\circ \Av^N_*:A\mod\to A\mod$$
is given by $\fn$-adic completion.

\medskip

\noindent--We show that if the functor 
$$\Upsilon:=\Av^N_*\circ \oblv_{N^-}:\CC^{N^-}\to \CC^N$$
is an equivalence, then $J$ identifies canonically with
$$\Av^{N^-}_!\circ \Av^N_*.$$

\medskip

\noindent--We state that for $\CC=\Dmod_\lambda(X)$, the functor $J$ is canonically isomorphic to $\Av^{N^-}_*\circ \Av^N_!$, when evaluated on
objects that are ULA with respect to $X\to N\backslash X$. 

\medskip

\noindent--From here we deduce the corresponding isomorphism for $\fg\mod_\chi$
(on objects that are finitely generated with respect to $\fn$).

\medskip

\noindent--Finally, we show the equivalence between the ULA and $\fn$-f.g. conditions under the localization functor 
$$\on{Loc}:\fg\mod_\chi\to \Dmod_\lambda(X).$$

\sssec{}

In \secref{s:Psi}, our ostensible goal is to prove the isomorphism
\begin{equation} \label{e:self-duality again}
\Av^{N^-}_!\circ \Av^N_*\simeq \Av^{N^-}_*\circ \Av^N_!
\end{equation}
on objects of $\Dmod_\lambda(X)$ that are ULA with respect to $X\to N\backslash X$. 

\medskip

In order to do this we introduce the pseudo-identity functor $\on{Ps-Id}_\CY$, which is an endofunctor on the category
of twisted D-modules on an algebraic stack $\CY$.

\medskip

We deduce \eqref{e:self-duality again} from a key geometric result, \thmref{t:pseudo-id and av}. 

\sssec{}

In \secref{s:gK} we adapt the results of the preceding sections to the context of a symmetric pair, and thereby
deduce the results announced earlier in the introduction. 

\medskip

Finally, we state our version of the 2nd adjointness conjecture and explain the analogy with the corresponding
assertion (which is a theorem of J.~Bernstein) in the case of $\fp$-adic groups. 

\ssec{Conventions and notation}

\sssec{}

Throughout the paper we will be working over an algebraically closed field $k$ of characteristic $0$, and we let $G$
be a connected reductive group over $k$. Throughout the paper, $X$ will denote the flag variety of $G$. 

\medskip

In Sects. \ref{s:recall}-\ref{s:Psi} we let $N$ be the unipotent radical of a Borel subgroup of $G$, and by $N^-$ 
the unipotent radical of an opposite Borel.

\medskip

In \secref{s:gK} we will change the context, and assume that $G$ is endowed with involution $\theta$;
we let $K:=G^\theta$. Let $P$ be a minimal parabolic compatible with $\theta$; in particular $P^-:=\theta(P)$
is an opposite parabolic. For the duration of \secref{s:gK}, we let $N$ be the unipotent radical
of $P$ and $N^-$ the unipotent radical of $P^-$. 

\sssec{}

This paper will make a (mild) use of higher algebra, in that we will be working with DG categories rather than with
triangulated categories (the reluctant reader can avoid this, see \secref{ss:avoid}). See \cite[Sect. 0.6]{DrGa1}
for a concise summary of the theory of DG categories. 

\medskip

Unless specified otherwise, our DG categories will be assumed \emph{cocomplete}, i.e., contain infinite
direct sums. Similarly, unless specified otherwise, functors between DG categories will be assumed
\emph{continuous}, i.e., preserving infinite direct sums.

\medskip

We denote by $\Vect$ the DG category of chain complexes of vector spaces.

\medskip

For a DG category $\CC$ and $\bc_0,\bc_1\in \CC$ we will denote by $\CHom_\CC(\bc_0,\bc_1)\in \Vect$
the Hom complex between them. 

\medskip

For a DG category $\CC$ we will denote by $\CC^c$ the full (but not cocomplete) subcategory consisting of compact objects. 

\sssec{}

If $\CC$ is endowed with a t-structure, we will denote by $\CC^{\leq 0}$ (resp., $\CC^{\geq 0}$) the subcategory
of connective (resp., coconnective) objects, and by $\CC^\heartsuit=\CC^{\leq 0}\cap \CC^{\geq 0}$ its heart.

\medskip

We will say that a functor between DG categories $\CC_1$ and $\CC_2$, each endowed with a t-structure, is
left t-exact (resp., right t-exact, t-exact) if it sends $\CC_1^{\geq 0}$ to $\CC_2^{\geq 0}$ (resp., $\CC_1^{\leq 0}$ to $\CC_2^{\leq 0}$,
both of the above). 

\sssec{}

For an associative algebra $A$ we will denote by $A\mod$ the corresponding DG category of $A$-modules
(and \emph{not} the abelian category). The same applies to $\fg\mod$ for a Lie algebra $\fg$.

\medskip

For a smooth scheme $Y$, equipped with a twisting $\lambda$, we let $\Dmod_\lambda(Y)$ denote
the DG category of twisted D-modules on $Y$. 

\medskip

For an algebraic group $H$, we denote by $\Rep(H)$ the DG category of $H$-representations. 

\ssec{How to get rid of DG categories?}  \label{ss:avoid}

\sssec{}

Unlike its sequel, in this paper we can make do by working with triangulated categories, rather than derived ones. 

\medskip

In general, the necessity to use DG categories arises for two reasons:

\medskip

\noindent--We perform operations on DG categories (e.g., tensor two DG categories over a monoidal DG category acting on them). 

\medskip

\noindent--We take limits/colimits in a given DG category.

\medskip

Both operations are actually present in this paper, but the general procedures can be replaced by \emph{ad hoc} constructions.

\sssec{}

We will be working with the notion of DG category acted on by an algebraic group $H$; if $\CC$ is such a category, we will be considering
the corresponding category $\CC^H$ of $H$-equivariant objects, equipped with the forgetful functor $\oblv_H:\CC^H\to \CC$.
The passage 
$$\CC\rightsquigarrow (\CC^H,\oblv_H)$$
cannot be intrinsically defined within the world of triangulated categories, and that is why we need DG categories. 

\medskip

However, in our particular situation, $H$ will be unipotent, and $\CC^H$ can be defined as the \emph{full subcategory} of
$\CC$, consisting of $H$-invariant objects. This does make sense at the triangulated level, where we regard $\CC$ as 
a triangulated category equipped with the action of the monoidal triangulated category $\Dmod(H)$. 

\sssec{}

In \secref{s:gK} the DG categories $\CC$ that we consider will themselves arise in the form $\CC=\CC_0^H$ for a
\emph{non-unipotent} $H$. However, this will only occur in the following examples:

\medskip

\noindent{(a)} $\CC_0$ is the category of (twisted) D-modules on a scheme $Y$ acted on by $H$;

\medskip

\noindent{(b)} $\CC_0$ is the category $\fg\mod_\chi$, where $\fg$ is a Lie algebra and $\chi$ is its central
character, and $(\fg,H)$ is a Harish-Chandra pair.

\medskip

In both these examples, there are several ways to define the corresponding category $\CC=\CC_0^H$ 
``by hand". 

\medskip

Note, however, that, typically, in neither of these cases will $\CC$ be the derived category of
the heart of its natural t-structure. 

\sssec{}

Finally, the only limits and colimits procedures that we consider will be indexed by filtered sets (in fact, by $\BN$), 
and they will consist of objects inside the heart of a t-structure. So the limit/colimit objects will stay in the heart.  

\ssec{Acknowledgements}

The second and the third authors would like to thank their teacher J.~Bernstein for many illuminating discussions related
to representations of real reductive groups and Harish-Chandra modules; the current paper is essentially an outcome of these
conversations. The third author would like to thank Sam Raskin for very useful conversations on higher categories.
The first author would like to thank the Max Planck Institute for Mathematics for support, hospitality, and a nice research environment.

\medskip

The research of D.G. has been supported by NSF grant DMS-1063470. The research of T.H.C. was partially supported by NSF grant DMS-1702337.

\section{Recollections}  \label{s:recall}

In this section we recall some facts and constructions pertaining to the notion of action of a group on a DG category,
to the Beilinson-Bernstein localization theory, translations functors, and the long intertwining functor. 

\ssec{Groups acting on categories: a reminder}  

\sssec{}  \label{sss:action}

In this paper we will extensively use the notion of (strong) action of an algebraic group $H$ on a DG category $\CC$;
for the definition see 
\cite[Sect. 10.2]{Ga2} (in the terminology of {\it loc.cit.}, these are categories acted on by $H_{\on{dR}}$). 

\medskip

One of the possible definitions is that such a data is equivalent to that of co-action of the co-monoidal DG category $\Dmod(H)$ on $\CC$,
where the co-monoidal structure on $\Dmod(H)$ is given by !-pullback with respect to the product operation on $H$:
\begin{equation} \label{e:coaction}
\CC\overset{\on{co-act}}\longrightarrow \Dmod(H)\otimes \CC.
\end{equation} 

\medskip

We can regard $\Dmod(H)$ also as a \emph{monoidal} category, with respect to the operation of \emph{convolution}, i.e.,
*-pushforward with respect to the product operation on $H$. If $H$ acts on $\CC$, we obtain also a monoidal action of
$\Dmod(H)$ on $\CC$ by the formula
$$\Dmod(H)\otimes \CC \overset{\on{Id}\boxtimes \on{co-act}}\longrightarrow 
\Dmod(H)\otimes \Dmod(H)\otimes \CC \overset{\sotimes \boxtimes \on{Id}}\longrightarrow  
\Dmod(H)\otimes \CC \overset{p_*\otimes \on{Id}} \longrightarrow \Vect\otimes \CC\simeq \CC, $$
where $p_*$ denotes the pushforward functor $\Dmod(H)\to \Dmod(\on{pt})=\Vect$. 

\medskip

We denote the corresponding monoidal operation by 
$$\CF\in \Dmod(H),\,\bc\in \CC\mapsto \CF\star \bc.$$

\sssec{}  \label{sss:H-Ch algebra}

Here are some examples of groups acting on categories that we will use:

\medskip

\noindent{(i)} Let $H$ act on a scheme/algebraic stack $Y$. Then $H$ acts on $\Dmod(Y)$. 

\medskip

\noindent{(i')} Suppose that $Y$ is equipped with a twisting $\lambda$ (see \cite[Sect. 6]{GR} for what this means) that is $H$-equivariant
(the latter means that the twisting descends to one on the quotient stack $H\backslash Y$). 
Then $H$ acts on the category $\Dmod_\lambda(Y)$.

\medskip

\noindent{(ii)} $H$ acts on the category $\fh\mod$ of modules over its own Lie algebra.

\medskip

\noindent{(ii')} Let $\chi$ be the character of the center $Z(\fh)=U(\fh)^{\on{Ad}_H}\subset Z(U(\fh))$. Then $H$ acts on the category $\fh\mod_\chi$,
the latter being the category of $\fh$-modules on which $Z(\fh)$ acts via $\chi$.

\medskip

\noindent{(iii)} Let $A$ be a (classical) associative algebra, equipped with a Harish-Chandra structure with respect to $H$. I.e., we are given 
an action of $H$ on $A$ by automorphisms, and a map of Lie algebras $\phi:\fh\to A$ such that

\begin{itemize}

\item $\phi$ is $H$-equivariant;

\item The adjoint action of $\fh$ on $A$ (coming from $\phi$) equals the derivative of the given $H$-action on $A$.

\end{itemize}

Then $A\mod$ is acted on by $H$. This example contains examples (ii) and (ii') (and also (i) and (i') for $Y$ affine) as particular cases. 

\medskip

An example of this situation is when $A=U(\fg)$, where $(\fg,H)$ is a Harish-Chandra pair, or $A$ is the quotient of $U(\fg)$
by a central character. 

\sssec{} \label{sss:averaging}

If $\CC$ is acted on by $H$, there is a well-defined category $\CC^H$ of $H$-equivariant objects in $\CC$, equipped with a pair
of adjoint functors
$$\oblv_H:\CC^H\rightleftarrows \CC:\Av^H_*.$$

One way to define $\CC^H$ is as the totalization of the co-Bar co-simplicial category
$$\CC\rightrightarrows \Dmod(H)\otimes \CC...$$
associated with the co-action of $\Dmod(H)$ on $\CC$. Under this identification, $\oblv_H$ is given by evaluation on $0$-simplices.

\medskip

Equivalently, we can define $\CC^H$ as the category of co-modules over the co-monad 
$$\oblv_H\circ \Av^H_*:=k_H\,\star -$$
acting on $\CC$, where $k_H\in \Dmod(H)$ is the \emph{constant sheaf} D-module.

\sssec{}

Note that the functor $\oblv_H$ is not necessarily fully faithful. In fact, the composition
$$\Av^H_*\circ \oblv_H:\CC^H\to \CC^H$$
is given by tensor product with $\on{C}^*_{\on{dR}}(H)$ (de Rham cochains on $H$),
where the unit of the adjunction $\on{Id}\to \Av^H_*\circ \oblv_H$
corresponds to the canonical map $k\to \on{C}^*_{\on{dR}}(H)$.

\medskip

The above implies, among the rest, that $\oblv_H$ \emph{is} fully faithful if $H$ is unipotent.

\sssec{}

The functor $\oblv_H$ does not necessarily admit a left adjoint. Its partially defined left adjoint\footnote{For the terminology of partially defined left adjoints etc. see, for example, appendix A of \cite{DrGa2}.} will be denoted by $\Av^H_!$. Concretely, the means that $\Av^H_!$ is defined on the full subcategory
of $\CC$ consisting of objects $c$ for which the functor
$$\CC^H\to \Vect, \quad c'\mapsto \CHom_\CC(c,\oblv_H(c'))$$
is co-representable. 

\sssec{}

Given two subgroups $H_1\subset H_2$ we will denote by
$$\oblv_{H_2/H_1}:\CC^{H_2}\rightleftarrows \CC^{H_1}:\Av_*^{H_2/H_1}$$
the corresponding adjoint pair of functors. 

\medskip

Similarly, will denote by $\Av_!^{H_2/H_1}$ the partially defined left adjoint to $\oblv_{H_2/H_1}$. 

\sssec{}

When $\CC=\Dmod_\lambda(Y)$ we have a canonical identification 
$$\CC^H=\Dmod_\lambda(H\backslash Y),$$
where $\Av^H_*$ is given by the *-pushforward functor 
$$f_*:\Dmod_\lambda(Y)\to \Dmod_\lambda(H\backslash Y),$$
and hence $\oblv_H$ is given by the *-pullback functor $f^*$. Note that the functor $f^*$ 
is well-defined on all D-modules (and not just holonomic ones) because the morphism $f$ is smooth.

\medskip

Let us be in Example (iii) above with $A=U(\fg)$, where $(\fg,H)$ is a Harish-Chandra pair. Then the corresponding category
$\fg\mod^H$ is by definition the (derived) category of $(\fg,H)$-modules. For $\fg=\fh$ we have
$$\fg\mod^H=\Rep(H),$$
the category of $H$-representations. 

\sssec{}  \label{sss:t-structures}

Suppose that $\CC$ is equipped with a t-structure, so that the co-action functor \eqref{e:coaction}
is t-exact, where $\Dmod(H)$ is taken with respect to the \emph{left} D-module t-structure.

\medskip

Then the co-monad 
$$\bc\mapsto k_H\star \bc$$
is left t-exact. This implies that the category $\CC^H$ carries a t-structure, uniquely characterized by the property that
the forgetful functor $\oblv_H$ is t-exact. 

\medskip

In this case, the functor $\Av^H_*$, being the right adjoint of a t-exact functor, is left t-exact. 

\medskip

We will need the following technical assertion:

\begin{lem}  \label{l:Av and t}
Assume that the t-structure on $\CC$ is left-complete,
i.e., for an object $\bc\in \CC$, the map 
$$\bc\to \underset{n}{\on{lim}}\, \tau^{\geq -n}(\bc)$$
is an isomorphism. Then:

\smallskip

\noindent{\em(a)} The t-structure on $\CC^H$ is also left-complete and for $\bc\in \bC$, the natural map
$$\Av^H_*(\bc)\to \underset{n}{\on{lim}}\, \Av^H_*(\tau^{\geq -n}(\bc))$$
is an isomorphism. 

\smallskip

\noindent{\em(b)} If for an object $\bc\in \bC$, the partially defined functor $\Av_!^H$ 
is defined on every $\tau^{\geq -n}(\bc)$, then it is defined on $\bc$ itself, and the natural map
$$\Av^H_!(\bc)\to \underset{n}{\on{lim}}\, \Av^H_!(\tau^{\geq -n}(\bc))$$
is an isomorphism. 

\smallskip

\noindent{\em(b')} If the t-structure on $\CC$ is compatible with filtered colimits, and 
for an object $\bc\in \bC$, the partially defined functor $\Av_!^H$ 
is defined on every $H^n(\bc)$, then it is defined on $\bc$ itself.

\end{lem}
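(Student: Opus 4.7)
Claim (a)(ii) will be immediate: since $\Av^H_*$ is a right adjoint to $\oblv_H$ it preserves all limits, and left-completeness of $\CC$ gives $\bc \simeq \lim_n \tau^{\geq -n}\bc$, so $\Av^H_*(\bc)\simeq \lim_n \Av^H_*(\tau^{\geq -n}\bc)$.  For claim (a)(i), my plan is to invoke the conservativity of $\oblv_H$, which holds by general principles for forgetful functors out of comodules over a comonad (cf.~\secref{sss:averaging}).  Given $\bc'\in\CC^H$, the tautological map $\bc'\to\lim_n\tau^{\geq -n}\bc'$ becomes, after $\oblv_H$, the composite of two maps in $\CC$: the comparison $\oblv_H(\lim_n\tau^{\geq -n}\bc')\to \lim_n\oblv_H(\tau^{\geq -n}\bc')$ (not automatic, since $\oblv_H$ is only a left adjoint), and the canonical map $\oblv_H\bc'\to \lim_n\tau^{\geq -n}\oblv_H\bc'$ (an iso by left-completeness of $\CC$ and t-exactness of $\oblv_H$).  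The comparison is also an iso because it induces an iso on each $H^k$ --- the tower is eventually constant in each cohomological degree --- and another appeal to left-completeness of $\CC$ upgrades a cohomological iso to an iso.

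For part (b), I would set $L:=\lim_n\Av^H_!(\tau^{\geq -n}\bc)$ in $\CC^H$ and show that $L$ corepresents the functor $\bc'\mapsto \CHom_\CC(\bc,\oblv_H\bc')$.  Using part (a) to write $\oblv_H\bc'\simeq \lim_m \oblv_H\tau^{\geq -m}\bc'$, one obtains
$$\CHom_\CC(\bc,\oblv_H\bc')\simeq \lim_m\CHom_\CC(\tau^{\geq -m}\bc,\oblv_H\tau^{\geq -m}\bc')\simeq \lim_m\CHom_{\CC^H}(\Av^H_!(\tau^{\geq -m}\bc),\tau^{\geq -m}\bc'),$$
using commutation of $\CHom$ with limits in the second slot, the vanishing of $\CHom(\tau^{\leq -m-1}\bc,-)$ on $\CC^{\geq -m}$, and the defining adjunction of the partial functor $\Av^H_!$ on $\tau^{\geq -m}\bc$.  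The last step --- identifying this with $\CHom_{\CC^H}(L,\bc')$ --- is the main technical obstacle: one exploits the Postnikov-type structure of the tower $\{\Av^H_!(\tau^{\geq -n}\bc)\}$, whose successive fibers $\Av^H_!(H^{-n-1}(\bc)[n+1])$ sit in $(\CC^H)^{\leq -n-1}$ by right t-exactness of $\Av^H_!$, combined with left-completeness of $\CC^H$ from (a) via a Milnor-type argument controlling cohomological depths.

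Part (b') then reduces to (b) by an induction.  The hypothesis that the t-structure on $\CC$ is compatible with filtered colimits gives $\tau^{\geq -n}\bc\simeq \on{colim}_m \tau^{[-n,m]}\bc$, a filtered colimit of bounded objects, each built from finitely many shifts of $H^k(\bc)$ via cofiber sequences.  Since $\Av^H_!$, being a partial left adjoint, commutes with cofibers and filtered colimits wherever it is defined, the hypothesis that it is defined on every $H^n(\bc)$ propagates first to each bounded $\tau^{[-n,m]}\bc$, then to every $\tau^{\geq -n}\bc$, and finally, via part (b), to $\bc$ itself.
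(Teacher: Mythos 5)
The paper itself states Lemma~\ref{l:Av and t} as a ``technical assertion'' and gives no proof, so I am assessing your plan on its own terms rather than against a reference argument.

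Your proposal for (a) is essentially correct, but your worry about the comparison map $\oblv_H(\lim_n\tau^{\geq-n}\bc')\to\lim_n\oblv_H(\tau^{\geq-n}\bc')$ is unfounded and the route you take around it is shaky. As recalled in \secref{sss:averaging}, $\CC^H$ is the totalization of a cosimplicial diagram and $\oblv_H$ is evaluation at $0$-simplices; since limits commute with limits, $\oblv_H$ preserves \emph{all} limits, and (a)(i) follows directly from conservativity and t-exactness of $\oblv_H$ together with left-completeness of $\CC$. Your alternative ``it induces an iso on each $H^k$'' argument is circular as stated: identifying the cohomology of $\lim_n\tau^{\geq-n}\bc'$ with $H^k(\bc')$ is essentially the statement you are trying to prove.

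The genuine gap is in part (b). You invoke ``the vanishing of $\CHom(\tau^{\leq -m-1}\bc,-)$ on $\CC^{\geq -m}$'', but this is false at the level of Hom \emph{complexes}: for $x\in\CC^{\leq -m-1}$ and $y\in\CC^{\geq -m}$ one only gets $H^p\,\CHom_\CC(x,y)=0$ for $p\leq 0$, i.e.\ $\CHom_\CC(x,y)\in\Vect^{\geq 1}$; it is not zero. Because you truncate $\bc$ and $\bc'$ to the \emph{same} level $m$, the cohomological amplitude of the error term $\CHom(\tau^{\leq-m-1}\bc,\oblv_H\tau^{\geq -m}\bc')$ stays pinned to degrees $\geq 1$ for every $m$, and hence does not disappear in $\lim_m$. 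So the displayed chain of ``isomorphisms'' does not hold. The standard fix is to decouple the two truncations: first fix $\bc'\in(\CC^H)^{\geq -m_0}$ (bounded below) and truncate only $\bc$. Then the error $\CHom(\tau^{\leq -m-1}\bc,\oblv_H\bc')$ lies in degrees $\geq m+1-m_0$, which does go to $+\infty$, so $\CHom_\CC(\bc,\oblv_H\bc')\simeq\lim_m\CHom_\CC(\tau^{\geq -m}\bc,\oblv_H\bc')\simeq\lim_m\CHom_{\CC^H}(\Av^H_!(\tau^{\geq -m}\bc),\bc')$. A symmetric cohomological-amplitude estimate, using right t-exactness of $\Av^H_!$ to show $\on{fib}(L\to\Av^H_!(\tau^{\geq-n}\bc))\in(\CC^H)^{\leq -n-1}$, identifies this with $\CHom_{\CC^H}(L,\bc')$ for $\bc'$ bounded below. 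One then removes the boundedness hypothesis on $\bc'$ using left-completeness of $\CC^H$ from part (a) and the fact that $\CHom$ in the second slot and $\oblv_H$ both preserve limits. Your sketch gestures at the Postnikov/Milnor idea but misplaces where the cohomological estimates are actually needed. For (b'), note also that $\tau^{\geq -n}\bc\simeq\colim_m\tau^{[-n,m]}\bc$ is a right-completeness statement, which ``compatible with filtered colimits'' alone does not give; you need in addition that $\bigcap_m\CC^{\geq m}=0$ (true in the situations where the lemma is applied, but worth flagging as a hypothesis).
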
 

\ssec{Localization theory}

\sssec{}  \label{sss:localization general}

Let $G$ be an algebraic group acting on a smooth variety $X$, equipped with a $G$-equivariant twisting $\lambda$.
Let $A$ be an associative algebra equipped with a Harish-Chandra structure with respect to $G$, and let us be given
a map 
\begin{equation} \label{e:map to Sect}
A\to \Gamma(X,\on{D}_\lambda),
\end{equation}
as associative algebras equipped with Harish-Chandra structures with respect to $G$. 

\medskip

Then the functor
$$\Gamma(X,-):\Dmod_\lambda(X)\to \Vect$$
naturally factors as 
$$\Dmod_\lambda(X)\to \Gamma(X,\on{D}_\lambda)\mod\to A\mod \to \Vect.$$

By a slight abuse of notation, we will denote the resulting functor 
$$\Dmod_\lambda(X)\to A\mod$$
by the same symbol $\Gamma$. It admits a left adjoint, denoted $\on{Loc}$. Both these functors 
are compatible with the $G$-actions.  

\medskip

Note that the functor $\on{Loc}$ is fully faithful if and only if the map \eqref{e:map to Sect}
is an isomorphism. 

\sssec{}  \label{sss:BB}

The example of this situation of interest for us is, of course, when $G$ is reductive and $X$ is the flag variety of $G$.
In this case $G$-equivariant twistings on $X$ are in bijection with elements of $\ft^*$ (the dual vector space of the
abstract Cartan $\ft$), where we take the $\rho$-shift into account. 

\medskip

Given $\lambda\in \ft^*$, we take
$$A=U(\fg)_\chi:=U(\fg)\underset{Z(\fg)}\otimes k,$$
where $Z(\fg)\to k$ is the homomorphism $\chi$ corresponding to $\lambda$ via the Harish-Chandra map $Z(\fg)\to \Sym(\ft)$. 

\medskip

It is a theorem of Kostant that in this case the corresponding map
$$U(\fg)_\chi\to \Gamma(X,\on{D}_\lambda)$$
is an isomorphism, i.e., the functor $\on{Loc}$ is fully faithful.

\medskip

The following is the Localization Theorem of \cite{BB1} (amplified by \cite{BB2}):

\begin{thm} \label{t:localization} Consider $\Dmod_\lambda(X)$ as equipped with the \emph{left} D-module t-structure.

\smallskip

\noindent{\em(a)}
Let $\lambda$ be such that $\check\alpha(\lambda)\neq 0$ for any coroot $\check\alpha$ of $G$
(we call such $\lambda$ ``regular"). Then the functors
$\Gamma$ and $\on{Loc}$ are mutually inverse equivalences. 

\smallskip

\noindent{\em(b)}
Let $\lambda$ be such that $\check\alpha(\lambda)\notin \BZ^{<0}$ for any coroot $\check\alpha$ of $G$
(we call such $\lambda$ ``dominant"). Then
The functor $\Gamma$ is t-exact.

\smallskip

\noindent{\em(c)} 
Let $\lambda$ be regular and dominant. Then
the functors $\Gamma$ and $\on{Loc}$ are mutually inverse equivalences, compatible with the t-structures.

\end{thm}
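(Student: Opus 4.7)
The three parts interlock, and I would prove them in the order (b), (c), (a). Note that (c) follows formally once both (a) and (b) are in hand: a t-exact equivalence is automatically compatible with t-structures on the nose, and its inverse is also t-exact.

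For part (b), the goal is to establish $H^i(X,\CM)=0$ for $i>0$ and every $\CM \in \Dmod_\lambda(X)^\heartsuit$. I would begin with the case $\CM = \on{D}_\lambda$ itself by exploiting the symbol filtration, whose associated graded identifies with $\bigoplus_n \Sym^n(T_X)\otimes \L_\lambda$ (with the $\rho$-shift absorbed into $\L_\lambda$). For $\lambda$ dominant in the sense of the statement, Borel--Weil--Bott (Kempf vanishing) yields $H^i(X, \Sym^n(T_X)\otimes \L_\lambda)=0$ for $i>0$ and every $n$, and a spectral sequence (convergence uses that the symbol filtration is exhaustive with bounded graded pieces on the level of sheaf cohomology) propagates this to $H^{>0}(X,\on{D}_\lambda)=0$, recovering Kostant's computation at the level of higher cohomology. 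From here, any $\on{D}_\lambda$-coherent object admits a good filtration whose graded pieces are $\O_X$-coherent twists lying in the same dominant chamber, so the same vanishing holds. Finally, \lemref{l:Av and t} (applied to filtered colimits) extends the vanishing from $\on{D}_\lambda$-coherent modules to all of $\Dmod_\lambda(X)^\heartsuit$.

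For the regular dominant case inside (a) and (c), I would combine (b) with a conservativity statement. Kostant's theorem already gives $\on{Loc}$ fully faithful (as noted in \secref{sss:BB}). For regular dominant $\lambda$, $\Gamma$ is t-exact by (b), so to promote the adjunction to an equivalence it suffices to check conservativity of $\Gamma$ on the heart; by t-exactness and compatibility of $\Gamma$ with filtered colimits this automatically lifts to the full DG category. Conservativity on the heart follows from the Beilinson--Bernstein classification of simples: every irreducible object of $\Dmod_\lambda(X)^\heartsuit$ is the IC-extension of an irreducible local system on a Bruhat cell, and the regularity assumption $\check\alpha(\lambda)\ne 0$ ensures that the global sections functor does not kill any of these irreducibles (this can be checked stratum by stratum via Bruhat induction from Verma modules of the Levi).

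For part (a) in the general regular case (not necessarily dominant), I would reduce to the regular dominant case using translation functors. Any regular $\lambda$ lies in the Weyl orbit of a (unique) regular dominant weight, and the translation functors intertwining $\Dmod_\lambda(X)$ and $\Dmod_{w\cdot\lambda}(X)$ for $w\in W$ intertwine $\Gamma$ and $\on{Loc}$ up to equivalence; compatibility of these translations with the global sections functor bootstraps the equivalence from the regular dominant case to every regular $\lambda$. The main obstacle, I expect, will be the spectral sequence convergence in part (b): making the passage from vanishing on each $\Sym^n(T_X)\otimes \L_\lambda$ to vanishing on $\on{D}_\lambda$, and then from coherent modules to arbitrary objects of the heart, requires careful bookkeeping of the good filtrations and a precise use of \lemref{l:Av and t}.
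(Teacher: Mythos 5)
This theorem is not proved in the paper; it is recalled as the Localization Theorem of Beilinson--Bernstein, with references to \cite{BB1} and \cite{BB2}. So there is no internal proof to compare against, and the question is only whether your blind attempt would stand on its own. It would not, for three concrete reasons.

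First, your proof of (b) breaks down at the passage from $\on{D}_\lambda$ to a general coherent $\CM$. A good filtration on $\CM$ makes $\gr\CM$ a coherent $\Sym(T_X)$-module, but the graded pieces $F_n\CM/F_{n-1}\CM$ are \emph{arbitrary} coherent $\O_X$-modules; they are not ``twists lying in the same dominant chamber'', so Borel--Weil--Bott gives nothing. (Also note that $\gr\on{D}_\lambda=\Sym(T_X)$ as a filtered $\O_X$-module; the $\L_\lambda$ factor you inserted is not there -- the twist deforms the ring structure, not the symbol sheaf.) The actual Beilinson--Bernstein argument for (b) is of a different nature: one tensors $\CM$ with a finite-dimensional $G$-module $V$, uses the $G$-equivariant filtration of $\O_X\otimes V$ with line-bundle subquotients $\L_\nu$ for $\nu$ a weight of $V$, takes $\mu$ sufficiently ample so that the top piece $\CM\otimes\L_\mu$ has no higher cohomology, and exploits the dominance condition on $\lambda$ to control how the $Z(\fg)$-action spreads across the filtration. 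This tensoring trick is the irreducible core of the theorem and your proposal has no substitute for it.

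Second, your conservativity argument for (a)/(c) invokes ``the Beilinson--Bernstein classification of simples'' as IC-extensions on Bruhat cells. That classification applies only to the $N$-equivariant (or $B$-monodromic) subcategory, i.e.\ to Category $\CO$. The full heart $\Dmod_\lambda(X)^\heartsuit$ contains vastly more irreducibles (take any irreducible holonomic or non-holonomic $\on{D}_\lambda$-module), and there is no such classification. Moreover, the classification is usually \emph{derived from} the localization theorem, so the argument risks circularity. The standard conservativity proof again runs through the tensoring trick: for nonzero $\CM$ one finds $\mu$ ample with $\Gamma(X,\CM\otimes\L_\mu)\ne 0$ and then descends to $\Gamma(X,\CM)\ne 0$ using translation functors and regularity of $\lambda$.

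Third, your reduction of (a) to the regular dominant case via ``translation functors intertwining $\Dmod_\lambda(X)$ and $\Dmod_{w\cdot\lambda}(X)$'' is not valid: translation functors (as in \secref{ss:trans} of this paper) exist only when the shift $\mu=\lambda'-\lambda$ is an \emph{integral} weight, whereas $w\cdot\lambda-\lambda$ is generically non-integral. The functors that do connect $\Dmod_\lambda(X)$ with $\Dmod_{w\cdot\lambda}(X)$ are the intertwining functors (Radon transforms); the functor $\Upsilon$ of \secref{ss:longest intertwiner} is the longest such. This is a different family of equivalences and the reduction needs to be rewritten around them.
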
 

\sssec{A warning}

When $\chi$ is \emph{irregular}, the algebra $U(\fg)_\chi$ has an infinite cohomological dimension. This implies, in particular,
that the natural t-structure on $\fg\mod_\chi$ does \emph{not} descend to one on $\fg\mod_\chi^c$. 

\medskip

Let $\fg\mod_\chi^{\on{f.g.}}\subset \fg\mod_\chi$ be the full subcategory consisting of objects with finitely
many non-vanishing cohomology groups, and with each cohomology finitely generated as a $U(\fg)$-module.
We have
$$\fg\mod_\chi^c\subset \fg\mod_\chi^{\on{f.g.}},$$
but this inclusion is \emph{not} an equality (the latter would be equivalent to $U(\fg)_\chi$ having a 
finite cohomological dimension). 

\medskip

The functor $\Gamma(X,-)$ sends compact objects in $\Dmod_\lambda(X)$ to $\fg\mod_\chi^{\on{f.g.}}$,
but not necessarily to $\fg\mod_\chi^c$.

\medskip

A related fact is that in this case, the functor $\on{Loc}$ has an unbounded cohomological amplitude on the left
(it is right t-exact, being the left adjoint of a left t-exact functor $\Gamma(X,-)$). 

\sssec{}  

In what follows we will need the following observation:

\begin{prop} \label{p:good Av!}
Let $H_1\subset H_2$ be a pair of subgroups of $G$, and let $\CM$ be an object of $\fg\mod_\chi^{H_1}$.
Assume that the functor $\Av^{H_2/H_1}_!$ is defined on $\Loc(\CM)$. Then $\Av^{H_2/H_1}_!$ is defined 
on $\CM$ and we have
$$\Loc(\Av^{H_2/H_1}_!(\CM))\simeq \Av^{H_2/H_1}_!(\Loc(\CM)) \text{ and }
\Av^{H_2/H_1}_!(\CM)\simeq \Gamma\circ \Av^{H_2/H_1}_!\circ \Loc(\CM).$$
\end{prop}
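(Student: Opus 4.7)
The plan is to take
$$\CN := \Gamma\bigl(\Av^{H_2/H_1}_!(\Loc(\CM))\bigr)\in\fg\mod_\chi^{H_2}$$
(well-defined by hypothesis) as the candidate for $\Av^{H_2/H_1}_!(\CM)$ and verify directly that it corepresents the functor $\CM'\mapsto\CHom_{\fg\mod_\chi^{H_1}}(\CM,\,\oblv_{H_2/H_1}(\CM'))$ on $\fg\mod_\chi^{H_2}$. This simultaneously produces the existence of $\Av^{H_2/H_1}_!(\CM)$ and the identity $\Av^{H_2/H_1}_!(\CM)\simeq\Gamma\circ\Av^{H_2/H_1}_!\circ\Loc(\CM)$. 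The remaining commutation $\Loc(\Av^{H_2/H_1}_!(\CM))\simeq\Av^{H_2/H_1}_!(\Loc(\CM))$ then follows by a Yoneda argument: once both partial left adjoints are defined, chaining $\Av_!\dashv\oblv_{H_2/H_1}$, $\Loc\dashv\Gamma$, and the $G$-equivariance identity $\oblv_{H_2/H_1}\circ\Gamma\simeq\Gamma\circ\oblv_{H_2/H_1}$ gives the iso after testing against any object of $\Dmod_\lambda(X)^{H_2}$.

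The corepresentability of $\CN$ is verified by the following chain of natural isomorphisms. Here the first and last steps use Kostant's full faithfulness of $\Loc$ (the unit $\on{Id}\to\Gamma\circ\Loc$ is an iso, \secref{sss:BB}); the third uses the D-module-side adjunction $\Av^{H_2/H_1}_!\dashv\oblv_{H_2/H_1}$ (applicable since by hypothesis $\Av^{H_2/H_1}_!$ is defined on $\Loc(\CM)$); the fourth uses the strong $G$-equivariance of $\Loc$, which gives the commutation $\oblv_{H_2/H_1}\circ\Loc\simeq\Loc\circ\oblv_{H_2/H_1}$ on equivariant categories:
\begin{align*}
\CHom_{\fg\mod_\chi^{H_2}}(\CN,\CM')
&\simeq \CHom_{\Dmod_\lambda(X)^{H_2}}\bigl(\Loc\CN,\,\Loc\CM'\bigr)\\
&\simeq \CHom_{\Dmod_\lambda(X)^{H_2}}\bigl(\Av^{H_2/H_1}_!\Loc\CM,\,\Loc\CM'\bigr)\\
&\simeq \CHom_{\Dmod_\lambda(X)^{H_1}}\bigl(\Loc\CM,\,\oblv_{H_2/H_1}\Loc\CM'\bigr)\\
&\simeq \CHom_{\Dmod_\lambda(X)^{H_1}}\bigl(\Loc\CM,\,\Loc\,\oblv_{H_2/H_1}\CM'\bigr)\\
&\simeq \CHom_{\fg\mod_\chi^{H_1}}\bigl(\CM,\,\oblv_{H_2/H_1}\CM'\bigr).
\end{align*}

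The single step with real content is the second: one must verify that precomposition with the counit $\Loc\Gamma\bigl(\Av^{H_2/H_1}_!\Loc\CM\bigr)\to\Av^{H_2/H_1}_!\Loc\CM$ induces an isomorphism on $\CHom(-,\Loc\CM')$. The cofiber $\CQ$ of this counit satisfies $\Gamma(\CQ)=0$, since $\Gamma\circ\Loc\simeq\on{Id}$ by full faithfulness; the $\Loc\dashv\Gamma$ adjunction then gives $\CHom(\CQ,\Loc\CM')\simeq\CHom(\Gamma(\CQ),\CM')=0$, which is the required vanishing. This is the only nonformal ingredient in the argument — the standard fact that a fully faithful left adjoint identifies its source with the image of $\Loc\Gamma$ after testing against objects in that image — and the rest is bookkeeping with adjunctions. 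I expect no serious obstacle beyond being careful that Kostant's full faithfulness of $\Loc$ is being used at the DG (derived) level, which is the usual consequence of the vanishing of higher cohomologies of $D_\lambda$ on the flag variety.
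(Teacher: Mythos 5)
There is a genuine gap in your Step 2, in the vanishing argument for $\CQ$, the cofiber of the counit $\Loc\,\Gamma(\Av^{H_2/H_1}_!\Loc\CM)\to\Av^{H_2/H_1}_!\Loc\CM$. You write that the $\Loc\dashv\Gamma$ adjunction gives $\CHom(\CQ,\Loc\CM')\simeq\CHom(\Gamma(\CQ),\CM')$. That is the adjunction applied in the wrong direction: $\Loc\dashv\Gamma$ yields $\CHom(\Loc A,B)\simeq\CHom(A,\Gamma B)$, so the correct consequence of $\Gamma(\CQ)=0$ is $\CHom(\Loc\CM',\CQ)=0$ (the image of $\Loc$ is \emph{left}-orthogonal to $\ker\Gamma$), not $\CHom(\CQ,\Loc\CM')=0$. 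The identity $\CHom(A,\Loc B)\simeq\CHom(\Gamma A,B)$ would require $\Gamma$ to be a left adjoint of $\Loc$, which it is not; in general $\ker\Gamma$ is not left-orthogonal to the image of $\Loc$.

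The required vanishing is nevertheless true, but for a different reason: one shows that $\CQ$ itself is zero. This is in fact the route taken by the paper's proof (formulated there as an abstract lemma about a square of adjoints). Concretely: one first shows $\CHom(\Av^{H_2/H_1}_!\Loc\CM,\,\CQ)=0$ whenever $\Gamma(\CQ)=0$, by chaining the $\Av_!\dashv\oblv$ adjunction on the D-module side, then $\Loc\dashv\Gamma$, and then the commutation $\Gamma\circ\oblv\simeq\oblv\circ\Gamma$; this lands you in $\CHom(\CM,\oblv\,\Gamma\CQ)=0$. This shows the map $\Av^{H_2/H_1}_!\Loc\CM\to\CQ$ is zero, so $\CQ$ is a retract of $\Loc\,\Gamma(\Av^{H_2/H_1}_!\Loc\CM)[1]$ and hence lies in the essential image of $\Loc$; then $\Gamma(\CQ)=0$ together with $\Gamma\circ\Loc\simeq\on{Id}$ forces $\CQ=0$. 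Once you know $\CQ=0$, your Step 2 and the remaining chain of isomorphisms are fine, and the whole argument reduces to essentially the same computation as the paper's. So the strategy is correct and matches the paper; what needs repair is the specific justification in Step 2, which cannot be gotten from the $\Loc\dashv\Gamma$ adjunction alone in the direction you used it.
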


We will prove the following abstract version of \propref{p:good Av!}. Let 
$$
\CD
\CC'_2 @ >{i_2}>> \CC_2 \\
@V{G'}VV  @VV{G}V  \\
\CC'_1 @ >{i_1}>> \CC_1
\endCD
$$
be a commutative diagram of categories, and assume that the diagram
$$
\CD
\CC'_2 @ >{i_2}>> \CC_2 \\
@A{F'}AA  @AA{F}A  \\
\CC'_1 @ >{i_1}>> \CC_1, 
\endCD
$$
where $F$ (resp., $F'$) is the left adjoint of $G$ (resp., $G'$), also commutes. 

\medskip

Let $\bc_1\in \CC_1$ be an object, and assume that the partially defined left adjoint $(i_2)^L$ is defined on $F(\bc_1)$.

\begin{prop}
Assume that the functors $F$ and $F'$ are fully faithful. Then the partially defined left adjoint $(i_1)^L$ is defined on $\bc_1$
and $F'((i_1)^L(\bc_1))\simeq (i_2)^L(F(\bc_1))$.
\end{prop}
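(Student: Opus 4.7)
The plan is to construct an explicit candidate for $(i_1)^L(\bc_1)$, namely the object $\bd'_1 := G'((i_2)^L(F(\bc_1))) \in \CC'_1$, and to verify that it corepresents the functor $\bc'_1 \mapsto \CHom_{\CC_1}(\bc_1, i_1(\bc'_1))$, with the identification $F'(\bd'_1) \simeq (i_2)^L(F(\bc_1))$ falling out as a byproduct. The heart of the argument is the lemma that $(i_2)^L(F(\bc_1))$ lies in the essential image of $F'$.

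To prove that lemma, I would compute, for arbitrary $\be \in \CC'_2$:
\begin{align*}
\CHom_{\CC'_2}((i_2)^L F(\bc_1), \be)
&\simeq \CHom_{\CC_2}(F(\bc_1), i_2(\be)) \\
&\simeq \CHom_{\CC_1}(\bc_1, G(i_2(\be))) \\
&\simeq \CHom_{\CC_1}(\bc_1, i_1(G'(\be))),
\end{align*}
invoking in turn the adjunction $(i_2)^L \dashv i_2$, the adjunction $F \dashv G$, and the commutation $G \circ i_2 \simeq i_1 \circ G'$. The payoff is that $\CHom_{\CC'_2}((i_2)^L F(\bc_1), -)$ factors through $G'$, so it inverts every morphism $\phi$ in $\CC'_2$ with $G'(\phi)$ invertible. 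Since $F'$ is fully faithful left adjoint to $G'$, the essential image of $F'$ is a reflective subcategory of $\CC'_2$, and a standard characterization identifies it with the full subcategory of objects that are local for the class of morphisms inverted by $G'$; this places $(i_2)^L(F(\bc_1))$ inside the essential image of $F'$, and hence the counit provides an isomorphism $F'(\bd'_1) \simeq (i_2)^L(F(\bc_1))$.

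With the lemma in hand, the universal property of $\bd'_1$ follows by a direct chain of adjunctions: for $\bc'_1 \in \CC'_1$,
\begin{align*}
\CHom_{\CC_1}(\bc_1, i_1(\bc'_1))
&\simeq \CHom_{\CC_2}(F(\bc_1), F(i_1(\bc'_1))) \\
&\simeq \CHom_{\CC_2}(F(\bc_1), i_2(F'(\bc'_1))) \\
&\simeq \CHom_{\CC'_2}((i_2)^L F(\bc_1), F'(\bc'_1)) \\
&\simeq \CHom_{\CC'_2}(F'(\bd'_1), F'(\bc'_1)) \\
&\simeq \CHom_{\CC'_1}(\bd'_1, \bc'_1),
\end{align*}
using (respectively) $F$ fully faithful, the commutation $F \circ i_1 \simeq i_2 \circ F'$, the adjunction $(i_2)^L \dashv i_2$, the lemma, and $F'$ fully faithful. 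This shows simultaneously that $(i_1)^L$ is defined on $\bc_1$, that its value is $\bd'_1$, and that $F'((i_1)^L(\bc_1)) \simeq (i_2)^L(F(\bc_1))$. The only subtle point is the identification of $\on{ess.im}(F')$ with the local objects; this rests on the triangle identities for $F' \dashv G'$ together with $F'$ being fully faithful, and is a standard fact about reflective localizations that remains valid in the DG/$\infty$-categorical setting in which we work.
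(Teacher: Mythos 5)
Your proof is correct and follows essentially the same route as the paper: both produce the candidate $G'\bigl((i_2)^L F(\bc_1)\bigr)$ and reduce everything to showing that $(i_2)^L F(\bc_1)$ lies in the essential image of $F'$, via the identical chain of adjunctions $\CHom_{\CC'_2}\bigl((i_2)^L F(\bc_1), -\bigr) \simeq \CHom_{\CC_1}\bigl(\bc_1, i_1 G'(-)\bigr)$; the paper finishes this key lemma by a direct cofiber argument (the cone of the counit $F'G'(\bc'_2)\to\bc'_2$ is killed by $G'$, hence by $\CHom(\bc'_2,-)$ and $\CHom(F'G'(\bc'_2),-)$, hence vanishes), while you invoke the abstract characterization of the essential image of a fully faithful left adjoint. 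One small terminological slip: since $F'$ is a fully faithful \emph{left} adjoint, its essential image is a \emph{coreflective} (not reflective) subcategory of $\CC'_2$, and the correct characterization is the colocal one---precisely the condition you actually use, namely that $\CHom(X,-)$ inverts $G'$-equivalences---so the mathematics is right even though the word ``reflective'' is off.
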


\begin{proof}

Denote $\bc_2:=F(\bc_1)$ and 
$\bc'_2:=(i_2)^L(\bc_2)$. We claim that the map
\begin{equation} \label{e:bad map}
F'\circ G'(\bc'_2)\to \bc'_2
\end{equation}
is an isomorphism. (If this is this case, then  it is easy to see that the object $\bc'_1:=G(\bc'_2)$ is the value of $(i_1)^L$ on $\bc_1$). 

\medskip

Let $\wt\bc'_2$ denote the cone of \eqref{e:bad map}. Then $G'(\wt\bc'_2)=0$. We claim that this implies that $\wt\bc'_2=0$,
which is equivalent to the map $\bc'_2\to \wt\bc'_2$ being zero. 

\medskip

Indeed, for \emph{any}  $\wt\bc'_2\in \CC'_2$, we have
$$\CHom_{\CC'_2}(\bc'_2,\wt\bc'_2)\simeq 
\CHom_{\CC_2}(\bc_2,i_2(\wt\bc'_2))\simeq \CHom_{\CC_1}(\bc_1,G\circ i_2(\wt\bc'_2))\simeq
\CHom_{\CC_1}(\bc_1,i_1\circ G'(\wt\bc'_2)).$$
 
Hence, if $G'(\wt\bc'_2)=0$, then $\CHom_{\CC'_2}(\bc'_2,\wt\bc'_2)=0$, as desired.

\end{proof} 

\ssec{Translation functors} \label{ss:trans}

In this subsection we take $G$ to be a reductive group and $X$ its flag variety. 

\sssec{}

Let $\lambda$ be a dominant weight and let $\mu$ be a dominant integral weight. Set $\lambda'=\lambda+\mu$, and let
$\chi$ and $\chi'$ be the corresponding characters of $Z(\fg)$. 

\medskip

Let $V^\mu$ be the irreducible $G$-module with highest
weight $\lambda$.

\sssec{}  \label{sss:bimodule}

Let us view $U(\fg)_{\chi'}\otimes  (V^\mu)^*$ as a $(U(\fg),U(\fg)_{\chi'})$-bimodule, where $U(\fg)_{\chi'}$
acts on the $U(\fg)_{\chi'}$-factor in $U(\fg)_{\chi'}\otimes  (V^\mu)^*$ by right multiplication and $U(\fg)$
acts diagonally, with the action on the $U(\fg)_{\chi'}$-factor being that by left multiplication. 

\medskip

It is easy to show that when viewed as a $U(\fg)$-module, $U(\fg)_{\chi'}\otimes  (V^\mu)^*$ splits as a direct sum 
$$(U(\fg)_{\chi'}\otimes (V^\mu)^*)_{\chi} \oplus (U(\fg)_{\chi'}\otimes (V^\mu)^*)_{\neq \chi},$$
where the set-theoretic support of $(U(\fg)_{\chi'}\otimes (V^\mu)^*)_{\chi}$ over $\Spec(Z(\fg))$ is $\{\chi\}$ and
the set-theoretic support of $(U(\fg)_{\chi'}\otimes (V^\mu)^*)_{\neq \chi}$ over $\Spec(Z(\fg))$ is finite and disjoint from $\chi$. 

\medskip

This decomposition automatically respects the right $U(\fg)_{\chi'}$-action. 

\medskip

The key observation is that the assumptions on $\lambda$ and $\lambda'$ imply that $(U(\fg)_{\chi'}\otimes (V^\mu)^*)_{\chi}$
is \emph{scheme-theoretically} supported at $\chi\in \Spec(Z(\fg))$, i.e., $(U(\fg)_{\chi'}\otimes (V^\mu)^*)_{\chi}$
is well-defined as an object of $(\fg\mod_\chi)^\heartsuit$. 

\sssec{}

We define the functor 
$$T_{\chi'\to \chi}:\fg\mod_{\chi'}\to\fg\mod_\chi$$
to be given by
$$\CM\mapsto (U(\fg)_{\chi'}\otimes (V^\mu)^*)_{\chi} \underset{U(\fg)_{\chi'}}\otimes \CM.$$

\medskip

This functor is t-exact, since $(U(\fg)_{\chi'}\otimes (V^\mu)^*)_{\chi}$ is projective
as a right $U(\fg)_{\chi'}$-module (being a direct summand of such). 
At the level of abelian categories it is described as follows: for $\CM\in \fg\mod_{\chi'}$, the tensor
product $\CM\otimes (V^\mu)^*$ splits as a direct sum
$$(\CM\otimes (V^\mu)^*)_{\chi}\oplus (\CM\otimes (V^\mu)^*)_{\neq \chi},$$
where $(\CM\otimes (V^\mu)^*)_{\chi}\in \fg\mod_\chi$ and the 
set-theoretic support of $(\CM\otimes (V^\mu)^*)_{\neq \chi}$ over $\Spec(Z(\fg))$ is finite and disjoint from $\chi$. Then
\begin{equation} \label{e:naive translation}
T_{\chi'\to \chi}(\CM)=(\CM\otimes (V^\mu)^*)_{\chi}.
\end{equation}

\begin{rem}
At the level of derived categories,  we had to define the functor $T_{\chi'\to \chi}$ using the bimodule
$(U(\fg)_{\chi'}\otimes (V^\mu)^*)_{\chi}$, rather than by the formula \eqref{e:naive translation}, because
for an object of $\fg\mod$, to belong to $\fg\mod_\chi$ is not a property, but extra structure.
\end{rem} 

\sssec{}

The basic property of the translation functor $T_{\chi\to \chi'}$ is that it makes the following diagram commute:
$$
\CD
\Dmod_{\lambda'}(X)  @>{-\otimes \CO(-\mu)}>>  \Dmod_{\lambda}(X)  \\
@V{\Gamma}VV  @VV{\Gamma}V  \\
\fg\mod_{\chi'}   @>{T_{\chi'\to \chi}}>> \fg\mod_\chi,
\endCD
$$
where the $G$-equivariant line bundle $\CO(\mu)$ on $X$ is normalized so that it is ample
and $\Gamma(X,\CO(\mu))\simeq V^\mu$.

\sssec{}  \label{sss:bad trans}

We let  
$$T_{\chi\to \chi'}:\fg\mod_{\chi}\to\fg\mod_{\chi'}$$
be the left adjoint functor of $T_{\chi'\to \chi}$. 

\medskip

Tautologically, it makes the following diagram commute:
$$
\CD
\Dmod_{\lambda}(X)  @>{-\otimes \CO(\mu)}>>  \Dmod_{\lambda'}(X)  \\
@A{\on{Loc}}AA  @AA{\on{Loc}}A  \\
\fg\mod_{\chi}   @>{T_{\chi\to \chi'}}>> \fg\mod_{\chi'}.
\endCD
$$

\medskip

Consider the object
$$T_{\chi\to \chi'}(U(\fg)_\chi)\in \fg\mod_{\chi'}.$$

\medskip

The functor $T_{\chi\to \chi'}$ is given by
$$\CM\mapsto T_{\chi\to \chi'}(U(\fg)_\chi)\underset{U(\fg)_\chi}\otimes \CM,$$
where we regard $T_{\chi\to \chi'}(U(\fg)_\chi)$ as a $(U(\fg)_{\chi'},U(\fg)_\chi)$-bimodule. 

\medskip

For what follows, we will need to describe the above object $T_{\chi\to \chi'}(U(\fg)_\chi)$ more explicitly. 

\sssec{}  \label{sss:trans g}

Consider the tensor product
$$U(\fg)_\chi\otimes V^\mu$$
as a $(U(\fg),U(\fg)_\chi)$-bimodule as in \secref{sss:bimodule}, and consider the corresponding decomposition
$$(U(\fg)_\chi\otimes V^\mu)_{\chi'} \oplus (U(\fg)_\chi\otimes V^\mu)_{\neq \chi'}$$
according to set-theoretic support over $\Spec(Z(\fg))$.

\medskip

It is no longer true that $(U(\fg)_\chi\otimes V^\mu)_{\chi'}$ is scheme-theoretically supported at $\chi'\in \Spec(Z(\fg))$.
Let $(U(\fg)_\chi\otimes V^\mu)'_{\chi'}$ be the maximal quotient of $(U(\fg)_\chi\otimes V^\mu)_{\chi'}$ with scheme-theoretic
support at $\chi'$, i.e., the maximal quotient on which $U(\fg)$ acts via $U(\fg)_{\chi'}$. 

\begin{prop}
We have a canonical isomorphism of $(U(\fg)_{\chi'},U(\fg)_\chi)$-bimodules
$$T_{\chi\to \chi'}(U(\fg)_\chi)\simeq (U(\fg)_\chi\otimes V^\mu)'_{\chi'};$$
in particular, $T_{\chi\to \chi'}(U(\fg)_\chi)$ lies in $\fg\mod_{\chi'}^\heartsuit$. 
\end{prop}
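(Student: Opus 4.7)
The plan is to identify both sides of the claimed isomorphism as representing objects for the same functor $\fg\mod_{\chi'}\to \Vect$, and then deduce that the identification respects the bimodule structure. Concretely, set $B_L:=T_{\chi\to \chi'}(U(\fg)_\chi)$ and $B_R:=(U(\fg)_\chi\otimes V^\mu)'_{\chi'}$. I will show that for every $\CN\in \fg\mod_{\chi'}$, both $\CHom_{\fg\mod_{\chi'}}(B_L,\CN)$ and $\CHom_{\fg\mod_{\chi'}}(B_R,\CN)$ are canonically identified with $T_{\chi'\to \chi}(\CN)$; this gives the isomorphism $B_L\simeq B_R$ by Yoneda.

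First I would compute the functor corepresented by $B_L$. By the adjunction defining $T_{\chi\to\chi'}$ we have
\begin{equation*}
\CHom_{\fg\mod_{\chi'}}(B_L,\CN)\simeq \CHom_{\fg\mod_\chi}(U(\fg)_\chi, T_{\chi'\to\chi}(\CN))\simeq T_{\chi'\to\chi}(\CN),
\end{equation*}
the last equality because $U(\fg)_\chi$ is the free $U(\fg)_\chi$-module on one generator. Incidentally, this already places $B_L$ in the heart: since $T_{\chi'\to \chi}$ is t-exact and $U(\fg)_\chi$ is projective over $U(\fg)_\chi$, the derived $\CHom$ above is concentrated in degree $0$ whenever $\CN\in \fg\mod_{\chi'}^\heartsuit$, which forces $B_L\in \fg\mod_{\chi'}^\heartsuit$ (by both right and left $\Ext$-vanishing).

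Next I would compute the functor corepresented by $B_R$, in four elementary steps. (i) Since $\CN$ has exact central character $\chi'$, every $\fg$-linear map from $U(\fg)_\chi\otimes V^\mu$ to $\CN$ factors through the generalized $\chi'$-summand $(U(\fg)_\chi\otimes V^\mu)_{\chi'}$, and then through its maximal quotient on which $Z(\fg)$ acts via $\chi'$, namely $B_R$. (ii) Frobenius reciprocity for the finite-dimensional $G$-module $V^\mu$ gives
\begin{equation*}
\CHom_\fg(U(\fg)_\chi\otimes V^\mu,\CN)\simeq \CHom_\fg(U(\fg)_\chi, \CN\otimes (V^\mu)^*).
\end{equation*}
(iii) Mapping out of $U(\fg)_\chi$ picks out the submodule of vectors annihilated by $\ker(\chi)$, i.e.\ the exact $\chi$-submodule of $\CN\otimes (V^\mu)^*$. (iv) Under the hypotheses of \S\ref{sss:bimodule}, $(\CN\otimes (V^\mu)^*)_\chi$ is scheme-theoretically supported at $\chi$, so this submodule coincides with $(\CN\otimes (V^\mu)^*)_\chi=T_{\chi'\to\chi}(\CN)$. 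Chaining (i)--(iv) gives the desired identification.

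Combining the two computations and invoking Yoneda yields a canonical isomorphism $B_L\simeq B_R$ in $\fg\mod_{\chi'}^\heartsuit$. The last step is to check that this isomorphism is equivariant for the right $U(\fg)_\chi$-action. On both sides the right action comes from right multiplication on the $U(\fg)_\chi$ tensor factor: on $B_L$ via the functoriality of $T_{\chi\to\chi'}$ applied to $r_a:U(\fg)_\chi\to U(\fg)_\chi$, and on $B_R$ tautologically (right multiplication commutes with $Z(\fg)$ so descends to the quotient). In both Yoneda identifications it induces the same operation on $T_{\chi'\to \chi}(\CN)$, namely the right $U(\fg)_\chi$-action coming from the bimodule $(U(\fg)_{\chi'}\otimes (V^\mu)^*)_\chi$. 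I expect the main technical nuisance to be this bookkeeping of the right $U(\fg)_\chi$-action through the Frobenius-reciprocity chain --- not hard, but the step that needs to be written carefully in order for the Yoneda conclusion to upgrade from an isomorphism of objects to an isomorphism of $(U(\fg)_{\chi'},U(\fg)_\chi)$-bimodules.
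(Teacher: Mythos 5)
Your argument is correct and takes essentially the same route as the paper's: both establish that $T_{\chi\to\chi'}(U(\fg)_\chi)$ lies in the heart via the adjunction $\CHom(T_{\chi\to\chi'}(U(\fg)_\chi),-)\simeq T_{\chi'\to\chi}(-)$ together with t-exactness of $T_{\chi'\to\chi}$ and projectivity of $U(\fg)_\chi$, and then match the universal property with $(U(\fg)_\chi\otimes V^\mu)'_{\chi'}$ via the classical description of the abelian-level left adjoint to $T_{\chi'\to\chi}$. Your four-step Frobenius-reciprocity chain identifying $\Hom(B_R,\CN)$ is precisely an unpacking of the paper's ``it is easy to see that the above left adjoint is given by $\CM\mapsto(\CM\otimes V^\mu)'_{\chi'}$.''
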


\begin{proof} 

Since $T_{\chi\to \chi'}$ is the left adjoint of a t-exact functor and since $U(\fg)_\chi\in \fg\mod_\chi^\heartsuit$ 
is projective, we obtain that $T_{\chi\to \chi'}(U(\fg)_\chi)$ is a projective object in 
$\fg\mod_{\chi'}^\heartsuit$. 

\medskip

The functor
$$\CM\mapsto H^0\left(T_{\chi\to \chi'}(U(\fg)_\chi)\underset{U(\fg)_\chi)}\otimes \CM\right), \quad 
\fg\mod_\chi^\heartsuit\to \fg\mod_{\chi'}^\heartsuit$$
provides a left adjoint to the functor
$$T_{\chi'\to \chi}: \fg\mod_{\chi'}^\heartsuit\to \fg\mod_\chi^\heartsuit.$$

However, it is easy to see that the above left adjoint is given by
$$\CM\mapsto (\CM\otimes V^\mu)'_{\chi'},$$
where:

\begin{itemize}

\item $\CM\otimes V^\mu\simeq (\CM\otimes V^\mu)_{\chi'}\oplus (\CM\otimes V^\mu)_{\neq \chi'}$
is the decomposition of  $\CM\otimes V^\mu$ according to set-theoretic support over $\Spec(Z(\fg))$;

\item $(\CM\otimes V^\mu)'_{\chi'}$ is the maximal quotient of  $(\CM\otimes V^\mu)_{\chi'}$ 
with scheme-theoretic support at $\chi'\in \Spec(Z(\fg))$.

\end{itemize}

Hence, we obtain
$$T_{\chi\to \chi'}(U(\fg)_\chi)\simeq H^0\left(T_{\chi\to \chi'}(U(\fg)_\chi)\right)\simeq (U(\fg)_\chi\otimes V^\mu)'_{\chi'},$$
as desired.

\end{proof}

From here, and using Noetherianness, we obtain:

\begin{cor}  \label{c:bad trans}
The object $T_{\chi\to \chi'}(U(\fg)_\chi)$, viewed as a $(U(\fg),U(\fg)_\chi)$-bimodule admits a
resolution with terms of the form $U(\fg)_\chi \otimes V$, where $V$ is a finite-dimensional representation
of $G$.
\end{cor}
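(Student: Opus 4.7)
The plan is to build the resolution inductively, using the preceding proposition as a seed. That proposition identifies $T_{\chi\to\chi'}(U(\fg)_\chi)$ with the quotient $(U(\fg)_\chi\otimes V^\mu)'_{\chi'}$ of the bimodule $U(\fg)_\chi\otimes V^\mu$, which already supplies the surjection $U(\fg)_\chi\otimes V^\mu \twoheadrightarrow T_{\chi\to\chi'}(U(\fg)_\chi)$ as the zeroth step of the resolution; the problem therefore reduces to iteratively resolving the kernel by bimodules of the same shape.

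Two structural ingredients enter. First, every bimodule appearing in the construction is a Harish-Chandra bimodule, i.e.\ locally finite for the adjoint $\fg$-action $\on{ad}(x)(m)=x\cdot m-m\cdot x$: this is immediate for $U(\fg)_\chi\otimes V^\mu$ itself, since $U(\fg)_\chi$ is ad-locally finite by PBW and $V^\mu$ is finite-dimensional, and the property passes to all subobjects and quotients. Second, there is a natural adjunction
\[
\on{Hom}_{\text{bimod}}\bigl(U(\fg)_\chi\otimes V,\,M\bigr)\;\simeq\;\on{Hom}_{\fg}\bigl(V,\,M^{\on{ad}}\bigr),\qquad \phi\mapsto\bigl(v\mapsto\phi(1\otimes v)\bigr),
\]
for any $G$-representation $V$ and any $(U(\fg),U(\fg)_\chi)$-bimodule $M$; this is a direct check, using that $1\otimes V$ generates $U(\fg)_\chi\otimes V$ freely as a right $U(\fg)_\chi$-module. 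Consequently, given any finitely generated Harish-Chandra bimodule $M$, one picks a finite set of bimodule generators, replaces them by the finite-dimensional $G$-submodule $V$ they span under the adjoint action, and the adjunction then produces the desired surjection $U(\fg)_\chi\otimes V\twoheadrightarrow M$.

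To iterate, one needs that the kernel at each stage remains finitely generated as a bimodule; this is exactly where Noetherianness enters. Since $U(\fg)_\chi$ is Noetherian on both sides (being a quotient of $U(\fg)$), each $U(\fg)_\chi\otimes V$ is Noetherian as a $(U(\fg),U(\fg)_\chi)$-bimodule, hence every sub-bimodule is finitely generated. Starting from $M_0=T_{\chi\to\chi'}(U(\fg)_\chi)$, which is finitely generated as a quotient of $U(\fg)_\chi\otimes V^\mu$, the procedure runs indefinitely and yields the sought-after resolution. The main obstacle I expect is a clean verification of the adjunction, together with the observation that ad-local finiteness plus finite bimodule generation actually produces a $G$-stable finite-dimensional generating subspace; once these are in place, the Noetherian induction is purely mechanical.
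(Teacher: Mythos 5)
Your argument is correct and is exactly what the paper leaves implicit behind the phrase ``from here, and using Noetherianness'': surject $U(\fg)_\chi\otimes V^\mu$ onto $T_{\chi\to\chi'}(U(\fg)_\chi)$ via the preceding proposition, then iterate on the kernel using the free-module adjunction for Harish--Chandra bimodules, with Noetherianity of $U(\fg)_\chi$ guaranteeing finite generation at each stage. The integrability worry you raise is harmless: $U(\fg)_\chi\otimes V^\mu$ carries an honest $G$-action (adjoint on the first tensor factor, the given action on the second) whose derivative is the adjoint $\fg$-action, and this $G$-structure descends to $T_{\chi\to\chi'}(U(\fg)_\chi)$ and restricts to every kernel in the recursion, so the finite-dimensional ad-stable generating subspace you pick is automatically a $G$-submodule.
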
 

\ssec{The long intertwining operator}  \label{ss:longest intertwiner}

\sssec{}

In this subsection we take $G$ to be a reductive group and $X$ its flag variety. For a given $\lambda\in \ft^*$, we consider the
corresponding categories $\Dmod_\lambda(X)^N$ and $\Dmod_\lambda(X)^{N^-}$.

\medskip

The goal of this subsection is to supply a (possibly new) proof of the following well-known statement (see also \cite[Theorem 5.2]{CY} for a similar statement in a more general setting of the Matsuki correspondence):

\begin{prop} \label{p:Ups X} 
The partially defined functor $\Av^{N^-}_!$ \emph{is} defined on the essential image of $\oblv_N$, and the composition $\Av^{N^-}_!\circ \oblv_N$
provides an inverse to the functor
$$\Upsilon:=\Av^N_*\circ \oblv_{N^-}, \quad \Dmod_\lambda(X)^{N^-}\to \Dmod_\lambda(X)^N.$$
\end{prop}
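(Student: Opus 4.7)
The plan is to reduce the statement to an explicit computation on compact generators, exploiting the Bruhat stratification of $X$.

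First, I would observe that since $N^-$ is unipotent, $\oblv_{N^-}$ is fully faithful, and that if $\Av^{N^-}_!$ is defined on $\oblv_N(\CG)$ for every $\CG \in \Dmod_\lambda(X)^N$, then the composite $\Av^{N^-}_! \circ \oblv_N$ automatically serves as a left adjoint to $\Upsilon$: the chain of isomorphisms
$$\CHom(\Av^{N^-}_!\oblv_N \CG,\CF) \simeq \CHom(\oblv_N \CG,\oblv_{N^-}\CF) \simeq \CHom(\CG,\Av^N_* \oblv_{N^-}\CF) = \CHom(\CG,\Upsilon(\CF))$$
is tautological. The problem then splits into (a) showing this partial left adjoint is defined on the essential image of $\oblv_N$, and (b) showing the resulting adjunction is an equivalence.

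Next I would stratify $X = \bigsqcup_{w \in W} X^N_w$ by $N$-orbits and $X = \bigsqcup_{w \in W} X^{N^-}_w$ by $N^-$-orbits; each cell is an affine space, and the two stratifications meet transversely at the single flag $wB/B$, with $\dim X^N_w + \dim X^{N^-}_w = \dim X$. The category $\Dmod_\lambda(X)^N$ is compactly generated by the standard objects $\Delta^N_w := (\jmath^N_w)_!(\omega_{X^N_w})$, and similarly $\Dmod_\lambda(X)^{N^-}$ is compactly generated by the costandards $\nabla^{N^-}_w := (\jmath^{N^-}_w)_*(\omega_{X^{N^-}_w})$. By \lemref{l:Av and t} and a standard dévissage, it is enough to verify the existence and invertibility claim on each $\Delta^N_w$.

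The heart of the proof is the following cell computation: for each $w$, the transversality $X^N_w \cap X^{N^-}_w = \{wB/B\}$, combined with proper base change on the projective closure $\overline{X^N_w}$, yields a canonical isomorphism $\Av^{N^-}_!(\oblv_N(\Delta^N_w)) \simeq \nabla^{N^-}_w$, up to a cohomological shift accounting for the difference between $\dim N^-$ and $\dim X^{N^-}_w$. Symmetrically, $\Upsilon(\nabla^{N^-}_w) \simeq \Delta^N_w$ by the same transverse analysis run in reverse. The bijective matching of compact generators then shows $\Upsilon$ is an equivalence with quasi-inverse $\Av^{N^-}_! \circ \oblv_N$.

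The main obstacle will be the convergence of $\Av^{N^-}_!$ on $\oblv_N(\Delta^N_w)$: since $N^-$ is non-proper, such partial left adjoints can fail to exist in general, so finiteness here relies on the specific geometry. The key observation is that the $N^-$-orbits meeting the projective variety $\overline{X^N_w}$ have dimensions controlled by the Bruhat order, which forces the underlying colimit computing $\Av^{N^-}_!$ to be finite. Once this geometric finiteness is secured, the identification of the result with $\nabla^{N^-}_w$ and the combinatorial matching $w \leftrightarrow w$ of standards with costandards become direct bookkeeping with shifts and dimensions.
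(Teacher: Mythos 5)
Your overall strategy---reduce to compact generators via the Bruhat stratification, then do an explicit cell computation---is a well-known alternative to the paper's argument, which instead invokes Braden's theorem on hyperbolic restriction. The paper chooses a regular dominant cocharacter $\BG_m\to T$ whose attracting and repelling loci are the unions of $N$- and $N^-$-orbits, respectively, and deduces both the existence of $\Av^{N^-}_!$ and the invertibility of the unit map in one stroke from the contraction principle and the natural transformation $\Phi:\sH^+\to\sH^-$. Your route, by contrast, needs you to actually identify what the averaging functors do to each standard object. These are genuinely different proofs, and the stratification route is legitimate in principle.

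However, the specific cell computation you assert is incorrect. You claim $\Av^{N^-}_!(\oblv_N(\Delta^N_w))\simeq \nabla^{N^-}_w$, i.e. that $!$-averaging sends standards to \emph{costandards}. Already for $G=SL_2$ and $w=e$ this fails. Take $X=\BP^1$ with $x_0=eB/B$ (the $N$-fixed point) and $x_\infty=sB/B$ (the $N^-$-fixed point). Then $\Delta^N_e=\delta_{x_0}$, and $\oblv_{N^-}\circ\Av^{N^-}_!$ is computed by $\CF\mapsto \on{act}_!(\omega_{N^-}\boxtimes\CF)$ (up to shift), where $\on{act}:N^-\times X\to X$ is the action map. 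Since $\omega_{N^-}\boxtimes\delta_{x_0}$ lives on $N^-\times\{x_0\}$ and the orbit map $N^-\to N^-\cdot x_0$ is an isomorphism onto the open $N^-$-cell, the $!$-pushforward to $X$ is the $!$-extension of $\omega$ from the open cell, i.e. $\Delta^{N^-}_e$ (not $\nabla^{N^-}_e$). Equivalently, by adjunction, $\CHom(\Av^{N^-}_!(\delta_{x_0}),\delta_{x_\infty})=0$ since the $!$-costalk of any $N^-$-equivariant object at $x_0$ vanishes when that object is supported at $x_\infty$; this is the hallmark of a standard, whereas $\nabla^{N^-}_e$ has a nonzero map to a shift of $\delta_{x_\infty}$. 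So $!$-averaging sends standards to standards (up to shift), not to costandards, and the companion assertion $\Upsilon(\nabla^{N^-}_w)\simeq\Delta^N_w$ is wrong for the same reason.

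Beyond that, the computation of $\Av^{N^-}_!(\Delta^N_w)$ for general $w$ is not the routine proper base-change argument you indicate: the relevant map has complicated fibers, and the transversality $X^N_w\cap X^{N^-}_w=\{wB/B\}$ gives information at one point, not enough to identify the full answer. You would need an actual argument---e.g. a clean sheaf, factorization through the contraction, or a convolution/Radon transform formula---and you would still have to check that the \emph{unit} of the adjunction $\on{Id}\to\Upsilon\circ(\Av^{N^-}_!\circ\oblv_N)$ is the isomorphism, not merely that the source and target are abstractly isomorphic on generators. The paper's Braden-theoretic argument handles both issues simultaneously by identifying the unit after applying $\sH^+\circ\oblv_N$ with the canonical map $\Phi:\sH^+\to\sH^-$, which Braden's theorem makes an isomorphism on monodromic objects. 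If you want to pursue the compact-generator route, the cell computation needs to be corrected and supplied with a genuine proof, and the identification of the unit map needs to be made explicit.
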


As a consequence we will now deduce:

\begin{prop} \label{p:Ups g} 
Consider the action of $G$ on the category $\fg\mod_\chi$ for a given central character $\chi$. Then
the partially defined functor $\Av^{N^-}_!$ \emph{is} defined on the essential image of 
$$\oblv_N:\fg\mod_\chi^N\to \fg\mod_\chi,$$
and the composition $\Av^{N^-}_!\circ \oblv_N$ provides an inverse to the functor
$$\Upsilon:=\Av^N_*\circ \oblv_{N^-}, \quad \fg\mod_\chi^{N^-}\to \fg\mod_\chi^N.$$
\end{prop}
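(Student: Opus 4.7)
The plan is to reduce the statement to \propref{p:Ups X} via Beilinson--Bernstein localization. Let $\lambda \in \ft^*$ be the parameter corresponding to $\chi$. By Kostant's theorem (\secref{sss:BB}), the functor $\Loc: \fg\mod_\chi \to \Dmod_\lambda(X)$ is fully faithful, and by \secref{sss:localization general} it is $G$-equivariant; consequently it commutes with $\oblv_H$ and $\Av^H_*$ for any subgroup $H \subset G$, and it intertwines partially defined !-averaging functors in the precise sense of \propref{p:good Av!}.

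For $\CM \in \fg\mod_\chi^N$, the object $\Loc(\oblv_N(\CM)) \simeq \oblv_N(\Loc(\CM))$ lies in the essential image of $\oblv_N: \Dmod_\lambda(X)^N \to \Dmod_\lambda(X)$, on which $\Av^{N^-}_!$ is defined by \propref{p:Ups X}. Applying \propref{p:good Av!} with $H_1 = \{1\}$ and $H_2 = N^-$, we conclude that $\Av^{N^-}_!$ is defined on $\oblv_N(\CM)$, and we obtain a canonical isomorphism
$$\Loc \circ \Av^{N^-}_! \circ \oblv_N \simeq \Av^{N^-}_! \circ \oblv_N \circ \Loc$$
of functors $\fg\mod_\chi^N \to \Dmod_\lambda(X)^{N^-}$.

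Next, setting $F := \Av^{N^-}_! \circ \oblv_N$, I would assemble the units of the adjunctions $\oblv_N \dashv \Av^N_*$ and $\Av^{N^-}_! \dashv \oblv_{N^-}$ into a canonical natural transformation $\on{id}_{\fg\mod_\chi^N} \to \Upsilon \circ F$, and analogously produce $F \circ \Upsilon \to \on{id}_{\fg\mod_\chi^{N^-}}$ from the corresponding counits. By $G$-equivariance of $\Loc$ together with the identification of the previous paragraph, applying $\Loc$ to these natural transformations yields the analogous transformations in $\Dmod_\lambda(X)$, which are isomorphisms by \propref{p:Ups X}. Since $\Loc$ is fully faithful it reflects isomorphisms, so both natural transformations are already isomorphisms in the $\fg$-module categories.

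The main technical point requiring care is the coherence of these comparisons at the level of natural transformations, not merely on individual objects: one must verify that the units and counits built via the partially defined adjunction $\Av^{N^-}_! \dashv \oblv_{N^-}$ commute with $\Loc$ in a fashion compatible with the intertwining isomorphism above. This is ultimately a formal consequence of the $G$-equivariance of $\Loc$ and $\Gamma$ together with the structure of the partial adjunction encoded in \propref{p:good Av!}, but it is the step that would require the most bookkeeping to set up precisely.
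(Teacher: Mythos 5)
Your proposal follows the same approach as the paper's: both reduce the statement to Proposition~\ref{p:Ups X} via Beilinson--Bernstein localization, using Proposition~\ref{p:good Av!} to transfer definedness of $\Av^{N^-}_!$ and full faithfulness of $\Loc$ (Kostant's theorem) to conclude. The coherence bookkeeping you flag is sidestepped in the paper's proof by rewriting each of the two composites $\Upsilon\circ F$ and $F\circ\Upsilon$ directly as $\Gamma\circ\Loc\simeq\on{Id}$ through a chain of isomorphisms, commuting the \emph{right} adjoint $\Gamma$ past $\Av^N_*\circ\oblv_{N^-}$ (which it does by $G$-equivariance, while $\Loc$ need not commute with $\Av^N_*$ in general) and invoking Proposition~\ref{p:good Av!} and Proposition~\ref{p:Ups X} along the way, so that the units and counits of the partial adjunction never need to be compared explicitly.
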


\begin{proof}[Proof of \propref{p:Ups g}]

Choose a twisting $\lambda$ that gives rise to $\chi$ via the Harish-Chandra homomorphism. The fact that $\Av^{N^-}_!$
is defined on $\fg\mod_\chi^N$ follows from the corresponding fact for $\Dmod_\lambda(X)^N$ using \propref{p:good Av!}.
Moreover, for $\CM\in \fg\mod_\chi^N$, we have:
$$\on{Loc}\circ \Av^{N^-}_!\circ \oblv_N(\CM)\simeq \Av^{N^-}_!\circ \on{Loc}\circ \oblv_N(\CM)$$ 

\medskip

Further, we have
\begin{multline*} 
(\Av^N_*\circ \oblv_{N^-})\circ (\Av^{N^-}_!\circ \oblv_N) \simeq \Gamma\circ \on{Loc} \circ 
(\Av^N_*\circ \oblv_{N^-})\circ (\Av^{N^-}_!\circ \oblv_N) \simeq \\
\Gamma\circ (\Av^N_*\circ \oblv_{N^-})\circ (\on{Loc} \circ  \Av^{N^-}_! \circ \oblv_N) 
\simeq \Gamma\circ (\Av^N_*\circ \oblv_{N^-})\circ (\Av^{N^-}_!\circ \on{Loc} \circ \oblv_N) \simeq \\
\simeq \Gamma\circ (\Av^N_*\circ \oblv_{N^-})\circ (\Av^{N^-}_!\circ \oblv_N)\circ \on{Loc}
\simeq \Gamma\circ \on{Loc}\simeq \on{Id},
\end{multline*} 
where the fifth isomorphism
follows from \propref{p:Ups X}.

\medskip

Similarly,
\begin{multline*} 
(\Av^{N^-}_!\circ \oblv_N) \circ (\Av^N_*\circ \oblv_{N^-}) \simeq 
\Gamma\circ \on{Loc}\circ (\Av^{N^-}_!\circ \oblv_N)  \circ  (\Av^N_*\circ \oblv_{N^-}) \simeq \\ 
\simeq \Gamma\circ  \Av^{N^-}_! \circ \on{Loc}\circ \oblv_N  \circ  (\Av^N_*\circ \oblv_{N^-}) \simeq 
\Gamma\circ  (\Av^{N^-}_! \circ  \oblv_N)  \circ  (\Av^N_*\circ \oblv_{N^-})\circ \on{Loc} \simeq \\
\simeq \Gamma\circ \on{Loc}\simeq \on{Id}.
\end{multline*} 

\end{proof}

\sssec{}

The rest of this subsection is devoted to the proof of \propref{p:Ups X}. The key idea is to use the theorem of T.~Braden's 
(see, e.g., \cite{DrGa2}).

\medskip

Choose a regular dominant coweight $\BG_m\to T$, and consider the resulting $\BG_m$-action on $X$. Note 
that the orbits of $N$ (resp., $N^-$) are $\BG_m$-invariant. In particular, every $N$- or $N^-$-equivariant D-module on $X$
is $\BG_m$-monodromic. 

\medskip

Moreover, the disjoint union of $N$-orbits, to be denoted $X^+$ (resp., the disjoint union of $N^-$-orbits, to be denoted $X^-$)
is the attracting (resp., repelling) locus for the above action. Denote $X^0=X^{\BG_m}$ and denote by
$$i^+:X^0\rightleftarrows X^+:q^+, \quad p^+:X^+\to X$$
and 
$$i^-:X^0\rightleftarrows X^-:q^-, \quad p^-:X^-\to X$$
the corresponding maps.

\medskip

We will consider the functor of \emph{hyperbolic restriction} from the full subcategory
$$\Dmod_\lambda(X)^{\on{mon}}\subset \Dmod_\lambda(X)$$
consisting of $\BG_m$-monodromic objects to $\Dmod_\lambda(X^0)$. Braden's theorem insures that this functor
is well-defined:
$$(i^-)^!\circ (p^-)^*\simeq (q^-)_!\circ (p^-)^*=:\sH^-\underset{\sim}{\overset{\Phi}\leftarrow} \sH^+:=(q^+)_*\circ (p^+)^!\simeq (i^+)^*\circ (p^+)^!,$$
where the isomorphisms on the sides are known as the \emph{contraction principle}. 

\sssec{}

The fact that $\Av^{N^-}_!$ is defined on the essential image of $\oblv_N$ follows from the holonomicity.
Here is, however, an alternative argument:

\medskip

We claim that $\Av^{N^-}_!$ is defined on all of $\Dmod_\lambda(X)^{\on{mon}}$. Indeed, for an
object $\CF\in \Dmod_\lambda(X)$, the object $\Av^{N^-}_!(\CF)$ is defined if 
$$(p^-)^*\circ  \Av^{N^-}_!(\CF)\in \Dmod_\lambda(X^-)^{N^-}$$
is defined, and that is if and only if
$$(i^-)^!\circ \oblv_{N^-}\circ (p^-)^* \circ \Av^{N^-}_!(\CF)\in \Dmod_\lambda(X^0)$$
is defined. 

\medskip

We rewrite
\begin{multline} \label{e:hyp calc}
(i^-)^!\circ \oblv_{N^-}\circ (p^-)^*\circ \Av^{N^-}_!(\CF)\simeq 
(q^-)_!\circ \oblv_{N^-}\circ (p^-)^*\circ \Av^{N^-}_!(\CF)\simeq  \\
\simeq (q^-)_!\circ \oblv_{N^-}\circ  \Av^{N^-}_! \circ (p^-)^*(\CF)\simeq 
(q^-)_!\circ (p^-)^*(\CF)\simeq \sH^-(\CF),
\end{multline}
where the first isomorphism is given by the contraction principle, and 
the third isomorphism is due to the fact that the projection $q^-$ is $N^-$-invariant. 

\sssec{}

From \eqref{e:hyp calc} we obtain a canonical isomorphism
\begin{equation} \label{e:main calc 1}
\sH^-\circ \oblv_{N^-} \circ \Av^{N^-}_!(\CF) \simeq \sH^-(\CF), \quad \CF\in \Dmod_\lambda(X)^{\on{mon}}.
\end{equation}

\medskip

Similarly, we have:
\begin{multline*} 
(i^+)^*\circ \oblv_N \circ (p^+)^!\circ \Av^N_*(\CF)\simeq  
(q^+)_*\circ \oblv_N\circ (p^+)^!\circ \Av^N_*(\CF)\simeq  \\
\simeq (q^+)_*\circ \oblv_N\circ \Av^N_* \circ (p^+)^!(\CF) \simeq 
(q^+)_*\circ (p^+)^!(\CF)\simeq \sH^+(\CF),
\end{multline*}
i.e.,
\begin{equation} \label{e:main calc 2}
\sH^+\circ \oblv_N \circ \Av^N_*(\CF) \simeq \sH^+(\CF), \quad \CF\in \Dmod_\lambda(X)^{\on{mon}}.
\end{equation}

\sssec{}

A priori, the functor $\Av^{N^-}_!\circ \oblv_N$ is the left adjoint of the functor $\Av^N_*\circ \oblv_{N^-}$.
Let us prove that the unit of the adjunction 
\begin{equation} \label{e:unit ups}
\CF\to (\Av^N_*\circ \oblv_{N^-})\circ (\Av^{N^-}_!\circ \oblv_N)(\CF)
\end{equation}
is an isomorphism.

\medskip

To show that \eqref{e:unit ups} is an isomorphism, it is enough to show that it induces an isomorphism after applying the functor 
$\sH^+\circ \oblv_N$. The resulting map is
\begin{multline*} 
\sH^+\circ \oblv_N(\CF)\to 
(\sH^+\circ \oblv_N) \circ (\Av^N_*\circ \oblv_{N^-})\circ (\Av^{N^-}_!\circ \oblv_N)(\CF) \overset{\text{\eqref{e:main calc 2}}} \simeq \\
\simeq \sH^+\circ  \oblv_{N^-}\circ (\Av^{N^-}_!\circ \oblv_N)(\CF)\overset{\Phi} \to \sH^-\circ  \oblv_{N^-}\circ (\Av^{N^-}_!\circ \oblv_N)(\CF) 
\overset{\text{\eqref{e:main calc 1}}} \simeq \\
\simeq \sH^-\circ  \oblv_N(\CF).
\end{multline*}

By unwinding the definition of the natural transformation $\Phi:\sH^+\to \sH^-$ (see \cite[Equation (3.5)]{DrGa2}), we obtain that the above map is induced by 
the map
$$\sH^+\circ \oblv_N(\CF)\overset{\Phi}\to \sH^-\circ \oblv_N(\CF),$$
and hence is an isomorphism by Braden's theorem. 

\sssec{}

It remains to show that the functor $\Av^N_*\circ \oblv_{N^-}$ is conservative. But this follows from the fact that its composition
with $\sH^+\circ \oblv_N$ is conservative:
$$\sH^+\circ \oblv_N \circ \Av^N_*\circ \oblv_{N^-}  \overset{\text{\eqref{e:main calc 2}}} \simeq
\sH^+\circ \oblv_{N^-} \overset{\Phi}\simeq \sH^-\circ \oblv_{N^-},$$
while the latter is evidently conservative on $\Dmod_\lambda(X)^{N^-}$.

\section{Casselman-Jacquet functor as averaging} \label{s:J}

In this section we introduce the main character of this paper -- the Casselman-Jacquet functor, and state the main results. 

\medskip

Throughout this section $G$ is a reductive group, $X$ is its flag variety.  We let $N$ (resp., $N^-$) be the unipotent radical of a Borel
subgroup (resp., an opposite Borel) in $G$. 

\ssec{Casselman-Jacquet functor in the abstract setting}  \label{ss:abs J}

\sssec{}

Let $\CC$ be a category acted on by $N$. Consider the (fully faithful) forgetful functor
$$\oblv_N:\CC^N\to \CC.$$

As was mentioned in \secref{sss:averaging}, it admits a right adjoint functor $\Av^N_*$, so that
we have an adjoint pair $(\oblv_N,\Av^N_*)$.

\medskip

We will now consider the (usually, \emph{discontinuous}) right adjoint of $\Av^N_*$, denoted
$$(\Av^N_*)^R:\CC^N\to \CC.$$

We will also consider the monad $(\Av^N_*)^R\circ \Av^N_*$ acting on $\CC$.

\begin{rem}
As we shall see in \secref{ss:ident}, the monad $(\Av^N_*)^R\circ \Av^N_*$ is not
as bizarre as one could initially think: in the case when $\CC$ is the category of modules over
an associative algebra, equipped with a Harish-Chandra structure with respect to $N$,
the functor $(\Av^N_*)^R\circ \Av^N_*$ is that of $\fn$-adic completion. 
\end{rem} 

\sssec{}

Suppose now that the $N$-action on $\CC$ comes by restriction from a $G$-action.
We define the functor
$$J:\CC\to \CC^{N^-}$$ 
to be the composition
$$\Av^{N^-}_*\circ (\Av^N_*)^R\circ \Av^N_*.$$

By construction, the functor $J$ is the right adjoint of the functor
$$\oblv_N \circ \Av^N_* \circ \oblv_{N^-}:\CC^{N^-}\to \CC.$$

From here we obtain:

\begin{prop} \label{p:main}
Let $\CC$ be such that the functor 
$$\Upsilon:=\Av^N_* \circ \oblv_{N^-}:\CC^{N^-}\to \CC^N$$
is an equivalence. Then the functor $J$ identifies with
$$\on{Av}^{N^-}_!\circ \oblv_N\circ \Av^N_*.$$
\end{prop}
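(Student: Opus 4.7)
My approach is to exploit the fact that under the equivalence hypothesis on $\Upsilon$, its inverse admits two different descriptions via adjunctions, and the two candidate formulas for $J$ are manifestations of these.

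As is already recorded in the text, taking left adjoints term-by-term in the definition of $J$ exhibits $J$ as the right adjoint of the composite $\oblv_N \circ \Av^N_* \circ \oblv_{N^-} = \oblv_N \circ \Upsilon$. The first step is to observe that, $\Upsilon$ being an equivalence, its inverse $\Upsilon^{-1}$ serves simultaneously as its left and its right adjoint. Combining this with the adjunction $(\oblv_N, \Av^N_*)$ one finds that the right adjoint of $\oblv_N \circ \Upsilon$ is $\Upsilon^{-1} \circ \Av^N_*$, which gives
$$J \simeq \Upsilon^{-1} \circ \Av^N_*.$$

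The second step is to identify $\Upsilon^{-1} : \CC^N \to \CC^{N^-}$ with $\Av^{N^-}_! \circ \oblv_N$. The heuristic comes from formally taking left adjoints in $\Upsilon = \Av^N_* \circ \oblv_{N^-}$: these suggest $\Av^{N^-}_! \circ \oblv_N$ should be the partially defined left adjoint of $\Upsilon$. To make this rigorous I would check corepresentability: for $c \in \CC^N$, the adjunction $(\oblv_N, \Av^N_*)$ gives
$$\CHom_\CC(\oblv_N(c), \oblv_{N^-}(c')) \simeq \CHom_{\CC^N}(c, \Upsilon(c')),$$
and because $\Upsilon$ is an equivalence, the right-hand side is corepresented on $\CC^{N^-}$ by $\Upsilon^{-1}(c)$. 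This simultaneously shows that $\Av^{N^-}_!$ is defined on $\oblv_N(c)$ and that its value there is $\Upsilon^{-1}(c)$.

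The main (and essentially only) subtlety is the well-definedness of the partial left adjoint $\Av^{N^-}_!$ on the essential image of $\oblv_N$ — this is exactly what the equivalence hypothesis on $\Upsilon$ buys us. Assembling the two steps then produces the desired identification $J \simeq \Av^{N^-}_! \circ \oblv_N \circ \Av^N_*$.
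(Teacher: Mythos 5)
Your argument is correct and takes essentially the same route as the paper: both proofs reduce to the observation that, $\Upsilon$ being an equivalence, its inverse serves as both its left and right adjoint, so that $\Upsilon^{-1}\simeq \Av^{N^-}_!\circ\oblv_N$, and then identify $J$ with $\Upsilon^{-1}\circ\Av^N_*$ by adjoint-composition. The only (cosmetic) difference is in how the three factors are grouped --- the paper rewrites $\Av^{N^-}_*\circ(\Av^N_*)^R$ directly as $\Upsilon^R$, while you pass through the left adjoint of $J$ --- and you spell out the corepresentability check that the paper leaves implicit.
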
 

\begin{proof}

If $\Upsilon$ is an equivalence, then the functor 
$\on{Av}^{N^-}_!$ is defined on the essential image of $\oblv_N$, and 
$$\on{Av}^{N^-}_!\circ \oblv_N: \CC^N\to \CC^{N^-},$$
which is the (a priori partially defined) left adjoint of $\Upsilon$, is well-defined and is 
the inverse of $\Upsilon$. 

\medskip

In particular, $\on{Av}^{N^-}_!\circ \oblv_N$ is \emph{right} adjoint to $\Upsilon$, so
$$\Av^{N^-}_*\circ (\Av^N_*)^R\simeq \Upsilon^R\simeq \on{Av}^{N^-}_!\circ \oblv_N.$$

Composing, we obtain
$$J=\Av^{N^-}_*\circ (\Av^N_*)^R\circ \Av^N_*\simeq \on{Av}^{N^-}_!\circ \oblv_N \circ \Av^N_*,$$
as desired.

\end{proof}

\begin{rem}
According to \secref{ss:longest intertwiner}, the functor $\Upsilon$ is an equivalence in the 
following cases of interest:
$$\CC=\Dmod_\lambda(X) \text{ and } \CC=\fg\mod_\chi.$$
\end{rem} 

\ssec{Casselman-Jacquet functor as completion}  \label{ss:completion}

In this subsection $N$ can be any unipotent group. 

\sssec{}

In this subsection we will consider the particular case of $\CC=\fn\mod$, equipped with the natural
$N$-action. Note that in this case $\CC^N\simeq \Rep(N)$.

\medskip

We will see that the endofunctor $(\Av^N_*)^R\circ \Av^N_*$ of $\fn\mod$,
can be described explicitly as the functor of $\fn$-adic completion. 

\sssec{}

Consider the inverse family of $\fn$-bimodules indexed by natural numbers
$$i\mapsto U(\fn)^i:=U(\fn)/\fn^i\cdot U(\fn),$$
and the corresponding family of endofunctors of $\fn\mod$:
$$\CM\mapsto U(\fn)^i\underset{U(\fn)}\otimes \CM=:\CM/\fn^i\cdot \CM.$$

We claim:

\begin{prop} \label{p:completion}
The functor $(\Av^N_*)^R\circ \Av^N_*$ identifies canonically with
$$\CM\mapsto \underset{i}{\on{lim}}\, \CM/\fn^i\cdot \CM.$$
\end{prop}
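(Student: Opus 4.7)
The plan is to identify $(\Av^N_*)^R\circ \Av^N_*$ as the right adjoint of the derived $\fn$-torsion functor and then compute that right adjoint as the $\fn$-adic completion. To unpack $\Av^N_*$ explicitly: since $N$ is unipotent, $\oblv_N\colon \Rep(N)\to \fn\mod$ is fully faithful and realizes $\Rep(N)$ as the colocalization of $\fn\mod$ at the $\fn$-locally nilpotent objects, so the right adjoint $\Av^N_*$ is the derived $\fn$-torsion functor. The modules $U(\fn)^i=U(\fn)/\fn^i\cdot U(\fn)$ all lie in $\Rep(N)$ (their left $\fn$-action is globally nilpotent of order $i$), and cofinality of this family among cyclic torsion modules gives the explicit formula
\[
\oblv_N\circ \Av^N_*(\CM)\;\simeq\; \underset{i}{\on{colim}}\, \on{R}\Hom_{U(\fn)}(U(\fn)^i,\CM).
\]
Combined with the tautology $\Av^N_*\circ \oblv_N\simeq \Id$ (another consequence of the full faithfulness of $\oblv_N$ for unipotent $N$), this identifies $(\Av^N_*)^R\circ \Av^N_*$ with the right adjoint of the derived $\fn$-torsion functor on $\fn\mod$.

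Next, commuting the colimit above out of $\CHom$ reduces the claim to verifying, for each $i$, the tensor--hom swap
\[
\CHom_{\fn\mod}\bigl(\on{R}\Hom_{U(\fn)}(U(\fn)^i,V),\,\CM\bigr)\;\simeq\; \CHom_{\fn\mod}\bigl(V,\,U(\fn)^i\otimes_{U(\fn)}\CM\bigr)\;=\;\CHom(V,\CM/\fn^i\cdot\CM),
\]
i.e., that $U(\fn)^i\otimes_{U(\fn)}-$ is \emph{both} left and right adjoint to $\on{R}\Hom_{U(\fn)}(U(\fn)^i,-)$ --- a Frobenius-type two-sided adjoint pair. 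Granting this, pulling the colimit back out of $\CHom$ and pushing the resulting limit through gives
\[
\CHom\bigl(V,(\Av^N_*)^R\Av^N_*(\CM)\bigr)\;\simeq\; \underset{i}{\on{lim}}\,\CHom(V,\CM/\fn^i\cdot\CM)\;\simeq\; \CHom\bigl(V,\,\underset{i}{\on{lim}}\,\CM/\fn^i\cdot\CM\bigr),
\]
and the Yoneda lemma delivers the proposition.

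The Frobenius adjunction rests on perfectness and self-duality of the $U(\fn)$-bimodule $U(\fn)^i$. Perfectness is inductive in $i$: the base case $U(\fn)^1=k$ has the finite Chevalley--Eilenberg resolution $U(\fn)\otimes \wedge^\bullet \fn \to k$ (available because $\dim\fn<\infty$), and the step $i\to i{+}1$ follows from the short exact sequence $\fn^i\cdot U(\fn)/\fn^{i+1}\cdot U(\fn)\hookrightarrow U(\fn)^{i+1}\twoheadrightarrow U(\fn)^i$. Self-duality is a Serre-type calculation whose clean input is the standard isomorphism $\on{R}\Hom_{U(\fn)}(k,U(\fn))\simeq k[-\dim \fn]$: no twisting character appears, because the modular character of a nilpotent Lie algebra is automatically trivial (every $\on{ad}\,x$ is nilpotent), which is precisely what makes the cohomological shifts on the two sides of the bidual cancel. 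This local-duality identification is the main obstacle: while it is ultimately classical Lie-algebra cohomology, it must be executed carefully in the non-commutative bimodule setting with all shifts (and absent twists) tracked; once the self-adjointness of $U(\fn)^i$ is in hand, the remaining manipulations are formal commutations of limits and colimits with $\CHom$.
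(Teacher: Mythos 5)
Your route is genuinely different from the paper's. The paper never writes $\Av^N_*$ as an explicit colimit; it reduces the statement to two orthogonality assertions about the completion $\wh{T}\colon\CM\mapsto\underset{i}{\on{lim}}\,\CM/\fn^i\cdot\CM$: that $\wh{T}$ annihilates $\fn\mod^{\on{non-nilp}}$ (the right orthogonal of $\Rep(N)$), and that for $\CM\in\Rep(N)$ the cofiber of $\CM\to\wh{T}(\CM)$ again lies in $\fn\mod^{\on{non-nilp}}$. These are checked via \lemref{l:non-nilp} (itself Poincar\'e duality $\on{C}^\bullet(\fn,-)\simeq\on{C}_\bullet(\fn,-)$ up to shift) together with a pro-nullity analysis of $\on{coFib}(k\to\triv\otimes_{U(\fn)}U(\fn)^i)$ that passes through the co-regular representation $R_N\simeq\underset{i}{\on{colim}}\,(U(\fn)^i)^*$ and its finite free resolution. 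You instead compute the right adjoint of $\oblv_N\circ\Av^N_*=\underset{i}{\on{colim}}\,\on{R}\Hom_{U(\fn)}(U(\fn)^i,-)$ term by term. The essential input (unimodularity of $\fn$) is the same, but the formal scaffolding is different, and your version has the virtue of concentrating the work into a single adjunction statement about each $U(\fn)^i$.

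However, the key step as you phrase it is false: $U(\fn)^i$ is \emph{not} self-dual over $U(\fn)$. Already for $\fn=k^2$ (so $U(\fn)=k[x,y]$) and $i=2$, one has $\on{R}\Hom_{k[x,y]}\bigl(k[x,y]/(x,y)^2,\,k[x,y]\bigr)\simeq\bigl(k[x,y]/(x,y)^2\bigr)^*[-2]$, and the linear dual $\bigl(k[x,y]/(x,y)^2\bigr)^*$ is not cyclic, hence not isomorphic to $k[x,y]/(x,y)^2$. What is true, and what your argument actually needs, is the local-duality isomorphism $\on{R}\Hom_{U(\fn)}(B,U(\fn))\simeq B^*[-\dim\fn]$ (the \emph{linear} dual, not $B$ itself) for finite-dimensional $B$; triviality of the modular character is what makes this untwisted. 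Applying it twice produces the ambidexterity you want: with $B=U(\fn)^i$ and $d=\dim\fn$, one has $\on{R}\Hom_{U(\fn)}(B,-)\simeq B^*[-d]\otimes_{U(\fn)}-$, whose right adjoint is $\on{R}\Hom_{U(\fn)}(B^*[-d],-)\simeq\bigl((B^*)^*[-d]\otimes_{U(\fn)}-\bigr)[d]\simeq B\otimes_{U(\fn)}-$, with the shifts cancelling as you intend. So your conclusion is correct and the strategy is sound, but ``self-duality of the $U(\fn)$-bimodule $U(\fn)^i$'' must be replaced by two applications of $\on{R}\Hom_{U(\fn)}(B,U(\fn))\simeq B^*[-\dim\fn]$; taken literally, the stated self-duality would send a reader after a false lemma.
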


\sssec{Example}
Let $N=\BG_a$. We identify the category $\fn\mod$ with $k[t]\mod$, and 
$$\Rep(N)\subset \fn\mod$$
with 
$$k[t]\mod_{\{0\}}\subset k[t]\mod,$$
the full subcategory consisting of modules, on whose cohomologies $t$ acts locally nilpotently. 

\medskip

In this case, the assertion of the proposition is well-known: the functor in question is that of
$t$-adic completion.

\sssec{}

The rest of this subsection is devoted to the proof of \propref{p:completion}.

\medskip

Let $\fn\mod^{\on{non-nilp}}\subset \fn\mod$ be the full subcategory equal to the \emph{right orthogonal} of $\Rep(N)$.
The assertion of the proposition amounts to the following two statements:

\medskip

\noindent{(a)} For $\CM\in \fn\mod^{\on{non-nilp}}$, the limit $\underset{i}{\on{lim}}\, \CM/\fn^i\cdot \CM$ is zero.

\medskip

\noindent{(b)} For any $\CM\in \Rep(N)$, the cofiber of the canonical map
$$\CM\to \underset{i}{\on{lim}}\, \CM/\fn^i\cdot \CM$$ belongs to $\fn\mod^{\on{non-nilp}}$. 

\medskip

Let $\triv\in \Rep(N)\subset \fn\mod$ denote the trivial representation. First we notice:

\medskip

\begin{lem}  \label{l:non-nilp}
$\CM\in \fn\mod^{\on{non-nilp}} \,\Leftrightarrow\, \triv\underset{U(\fn)}\otimes \CM=0$.
\end{lem}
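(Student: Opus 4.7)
The plan is to reduce both sides of the equivalence to the vanishing of a single Chevalley--Eilenberg complex attached to $\CM$.

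First, I would establish that the subcategory $\Rep(N) \subset \fn\mod$ is generated, under colimits and shifts, by the trivial module $\triv$. Since $N$ is unipotent, every object of $\Rep(N)^\heartsuit$ admits an exhaustive filtration with associated graded pieces each isomorphic to $\triv$; combining this with left-completeness of the t-structure and compatibility with filtered colimits (in the spirit of \lemref{l:Av and t}) would propagate the generation statement from the abelian heart to the full DG subcategory. Because $\CHom_{\fn\mod}(-,\CM)$ carries colimits and shifts in the first variable to limits and shifts, this reduces the condition $\CM \in \fn\mod^{\on{non-nilp}}$ to the single vanishing $\CHom_{\fn\mod}(\triv,\CM) = 0$.

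Second, I would compute both the resulting $\CHom$ and $\triv \underset{U(\fn)}\otimes \CM$ using the Chevalley--Eilenberg free resolution $U(\fn) \otimes \Lambda^\bullet \fn \twoheadrightarrow \triv$. Applying $\CHom_{U(\fn)}(-,\CM)$ yields the cochain complex $\CM \otimes \Lambda^\bullet \fn^\ast$, while applying $-\underset{U(\fn)}\otimes \CM$ yields the chain complex $\Lambda^\bullet \fn \otimes \CM$, each with its standard differential. Since $N$ is unipotent, the one-dimensional $\fn$-module $\det \fn = \Lambda^{\dim \fn} \fn$ is canonically trivial (the adjoint action of $\fn$ on itself is nilpotent, so its trace vanishes). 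The perfect pairing $\Lambda^k \fn \otimes \Lambda^{\dim \fn - k} \fn \to \det \fn \simeq k$ then upgrades to an isomorphism of $\fn$-modules $\Lambda^k \fn^\ast \simeq \Lambda^{\dim \fn - k} \fn$, and thence to an isomorphism of complexes identifying the two constructions up to a cohomological shift by $\dim \fn$. In particular, one vanishes if and only if the other does.

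The main obstacle I anticipate is the first step: extending the obvious generation of the abelian heart $\Rep(N)^\heartsuit$ by $\triv$ to a genuine DG-categorical generation of $\Rep(N)$, which is what allows the right-orthogonality condition to collapse to a single test object. Once this is granted, the remaining Chevalley--Eilenberg comparison is formal, and the equivalence of the lemma follows at once.
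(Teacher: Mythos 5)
Your proof takes essentially the same route as the paper's: reduce the right-orthogonality condition to testing against $\triv$ via generation of $\Rep(N)$ by $\triv$, identify $\CHom_{\fn\mod}(\triv,-)$ and $\triv\underset{U(\fn)}\otimes-$ with Lie algebra cohomology and homology respectively, and use the shift-isomorphism between them coming from the triviality of $\det\fn$ for nilpotent $\fn$. One small remark: left-completeness runs in the wrong direction for the generation step (it is a statement about limits, not colimits); what one actually wants is that every $N$-representation is a filtered colimit of finite-dimensional subrepresentations, each an iterated extension of $\triv$, so that $\triv$ compactly generates $\Rep(N)$, which is the fact the paper invokes.
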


\begin{proof} 
The category $\Rep(N)$ is compactly generated by $\triv$. Hence,
$$\CM\in \fn\mod^{\on{non-nilp}} \,\Leftrightarrow\,\CHom_{\fn\mod}(\triv,\CM)=0 \,\Leftrightarrow\, \on{C}^\bullet(\fn,\CM)=0.$$

Since, $\on{C}^\bullet(\fn,-)$ is isomorphic to, up to a cohomological shift, to $\on{C}_\bullet(\fn,-)$, we obtain that
$$\CM\in \fn\mod^{\on{non-nilp}} \,\Leftrightarrow\, \on{C}_\bullet(\fn,\CM)=0 \,\Leftrightarrow\,
\triv\underset{U(\fn)}\otimes \CM=0.$$
\end{proof}

\begin{proof}[Proof of \em{(a)}]
Since each $U(\fn)^i$ admits a filtration with subquotients isomorphic to $\triv$, from the Lemma, we obtain that if 
$\CM\in \fn\mod^{\on{non-nilp}}$, then all the terms in 
$\underset{i}{\on{lim}}\, \CM/\fn^i\cdot \CM$ are zero.

\end{proof} 

\begin{proof}[Proof of \em{(b)}]

We need to show that for any $\CM\in \Rep(N)$, the map
$$\triv\underset{U(\fn)}\otimes \CM \to \triv\underset{U(\fn)}\otimes \left(\underset{i}{\on{lim}}\, U(\fn)^i \underset{U(\fn)}\otimes \CM\right)$$
is an isomorphism. We know that the functor 
$$\triv\underset{U(\fn)}\otimes-\simeq \on{C}_\bullet(\fn,-)$$ 
commutes with limits (again, because it is isomorphic up to a shift to $\on{C}^\bullet(\fn,-)$), so we need to show that the map
$$\triv\underset{U(\fn)}\otimes \CM \to \underset{i}{\on{lim}}\, \left(\triv\underset{U(\fn)}\otimes U(\fn)^i \underset{U(\fn)}\otimes \CM\right)$$
is an isomorphism. In other words, we need to show that 
$$\underset{i}{\on{lim}}\, \left(\on{coFib}(\triv\to \triv\underset{U(\fn)}\otimes U(\fn)^i) \underset{U(\fn)}\otimes \CM\right)$$
is zero. 

\medskip

Since $\Rep(N)$ is \emph{co-generated} by $R_N$, it suffices to show that 
$$\underset{i}{\on{lim}}\, \left(\on{coFib}(\triv\to \triv\underset{U(\fn)}\otimes U(\fn)^i) \underset{U(\fn)}\otimes R_N\right)$$
is zero. 

\medskip

Now, since $R_N$ admits a \emph{finite} resolution with terms of the form $U(\fn)\otimes V$ (where $V\in \Vect$), it suffices to
show that 
$$\underset{i}{\on{lim}}\, \left(\on{coFib}(\triv\to \triv\underset{U(\fn)}\otimes U(\fn)^i) \underset{U(\fn)}\otimes U(\fn)\otimes V\right)
\simeq \underset{i}{\on{lim}}\, \left(\on{coFib}(k\to \triv\underset{U(\fn)}\otimes U(\fn)^i)\otimes V\right)$$
is zero. 

\medskip

We will show that the inverse system (in $\Vect$)
$$i\mapsto \on{coFib}(k\to \triv\underset{U(\fn)}\otimes U(\fn)^i)$$ is \emph{null}, i.e., for every $i$ there exists $i'>i$,
such that the map
$$\on{coFib}(k\to \triv\underset{U(\fn)}\otimes U(\fn)^{i'})\to \on{coFib}(k\to \triv\underset{U(\fn)}\otimes U(\fn)^i)$$
is zero. 

\medskip

The objects of $\Vect$ involved are compact (i.e., have finitely many cohomologies and each cohomology is finite-dimensional).
Hence, we can dualize, and the assertion becomes equivalent to the fact that the direct system
$$i\mapsto \on{Fib}\left(\on{C}^\bullet(\fn,(U(\fn)^i)^*)\to k\right)$$
is null. Again, by compactness, the latter is equivalent to the fact that 
$$\underset{i}{\on{colim}}\, \on{Fib}\left(\on{C}^\bullet(\fn,(U(\fn)^i)^*)\to k\right)=0,$$
i.e., that the map
$$\underset{i}{\on{colim}}\, \on{C}^\bullet(\fn,(U(\fn)^i)^*)\to k$$
is an isomorphism. Since $\on{C}^\bullet(\fn,-)$ commutes with colimits (being isomorphic up to a shift to $\on{C}_\bullet(\fn,-)$ ), 
this is equivalent to the map
\begin{equation} \label{e:ev}
\on{C}^\bullet\left(\fn, (\underset{i}{\on{colim}}\,( U(\fn)^i)^*\right)\to k
\end{equation}
being an isomorphism. 

\medskip

We now notice that the pairing
$$f,u \mapsto u(f)(1), \quad f\in R_N,\,\,u\in U(\fn)$$
defines an isomorphism 
$$R_N\simeq \underset{i}{\on{colim}}\, (U(\fn)^i)^*$$
as $\fn$-modules. 

\medskip

Under this isomorphism, the above map \eqref{e:ev} identifies with
$$\on{C}^\bullet(\fn, R_N)\to R_N \overset{f\mapsto f(1)}\longrightarrow k,$$
which is evidently an isomorphism.

\end{proof}

\qed[\propref{p:completion}]

\ssec{The Casselman-Jacquet functor for $A$-modules}  \label{ss:ident}

\sssec{}

Let $A$ be an associative algebra equipped with a Harish-Chandra structure with respect to $N$ (see \secref{sss:H-Ch algebra}
for what this means). In particular,
$A\mod$ is acted on by $N$, and we have the restriction functor 
$$A\mod\to \fn\mod,$$
and its left adjoint $\fn\mod\to A\mod$, both compatible with $N$-actions.

\medskip

We have commutative diagrams
$$
\CD
A\mod^N @>{\oblv_N}>> A\mod @>{\Av^N_*}>> A\mod^N  \\
@VVV   @VVV   @VVV  \\
\Rep(N) @>{\oblv_N}>> \fn\mod @>{\Av^N_*}>> \Rep(N)
\endCD
$$
and
$$
\CD
A\mod^N @>{\oblv_N}>> A\mod @>{\Av^N_*}>> A\mod^N  \\
@AAA   @AAA   @AAA  \\
\Rep(N) @>{\oblv_N}>> \fn\mod @>{\Av^N_*}>> \Rep(N).
\endCD
$$

\medskip

By passing to right adjoints in the second diagram, we obtain that the following diagram commutes as well:
\begin{equation} \label{e:other diag com}
\CD
A\mod @>{\Av^N_*}>> A\mod^N @>{(\Av^N_*)^R}>> A\mod  \\
@VVV  @VVV  @VVV  \\
\fn\mod @>{\Av^N_*}>> \Rep(N) @>{(\Av^N_*)^R}>> \fn \mod.
\endCD
\end{equation}

\sssec{}

From the commutation of \eqref{e:other diag com} and \propref{p:completion} we obtain:

\begin{cor}
The endofunctor $(\Av^N_*)^R\circ \Av^N_*$ on $A\mod$ is right t-exact.
\end{cor}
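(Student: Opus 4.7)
The plan is to use the commutative diagram \eqref{e:other diag com} together with \propref{p:completion} to reduce the claim to a statement on $\fn\mod$, and then verify that statement by means of the Mittag--Leffler condition. In detail, the forgetful functor $\on{res}\colon A\mod\to\fn\mod$ is t-exact and conservative (both t-structures are obtained by pullback from $\Vect$), hence reflects the property of being in degrees $\leq 0$. The bottom row of \eqref{e:other diag com} intertwines the two incarnations of $(\Av^N_*)^R\circ \Av^N_*$ along $\on{res}$, and by \propref{p:completion} the $\fn\mod$-incarnation is the $\fn$-adic completion $C\colon \CN\mapsto \underset{i}{\on{lim}}\,U(\fn)^i\underset{U(\fn)}\otimes \CN$. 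Thus it suffices to show that $C$ is right t-exact on $\fn\mod$.

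Right t-exactness of $C$ breaks into two parts. For the first, each term $\CN\mapsto U(\fn)^i\underset{U(\fn)}\otimes \CN$ is right t-exact. Since $\fn\cdot U(\fn)=U(\fn)\cdot\fn$, the right ideal $\fn^iU(\fn)$ is in fact two-sided, and $U(\fn)^i$ admits, as a right $U(\fn)$-module, a finite filtration $0\subset \fn^{i-1}U(\fn)/\fn^iU(\fn)\subset\cdots\subset U(\fn)^i$ whose successive quotients $\fn^jU(\fn)/\fn^{j+1}U(\fn)$ carry the trivial right $\fn$-action, i.e.\ are built from copies of $\triv$. The functor $\triv\underset{U(\fn)}\otimes - \simeq \on{C}_\bullet(\fn,-)$ has cohomological amplitude $[-\dim\fn,0]$, hence is right t-exact, and the same conclusion then propagates through the finite extension.

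The second and subtler part is that $\underset{i}{\on{lim}}$ itself preserves connectivity in this specific situation, despite limits being only left t-exact in general. The Milnor fiber sequence in the stable setting yields
\[
0\to R^1\underset{i}{\on{lim}}\,H^{n-1}(\CN/\fn^i\CN)\to H^n(C(\CN))\to \underset{i}{\on{lim}}\,H^n(\CN/\fn^i\CN)\to 0.
\]
For $n\geq 2$, both outer terms vanish by the first part. For $n=1$, the right term vanishes, and the left one vanishes by Mittag--Leffler: the transition map $H^0(\CN/\fn^{i+1}\CN)\to H^0(\CN/\fn^i\CN)$ identifies (using that the $H^0$ of a right t-exact functor is the underived one applied to $H^0$) with the evident surjection $H^0(\CN)/\fn^{i+1}H^0(\CN)\epi H^0(\CN)/\fn^iH^0(\CN)$. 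This last Mittag--Leffler verification is the most delicate step: preservation of connectivity under the inverse limit is not automatic, and hinges on the explicit surjectivity of the transition maps.
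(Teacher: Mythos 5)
Your proof is correct and follows the same reduction as the paper: the commutative square \eqref{e:other diag com}, together with \propref{p:completion}, reduces the claim to right t-exactness of the $\fn$-adic completion functor on $\fn\mod$ (using that the forgetful functor $A\mod\to\fn\mod$ is t-exact and conservative). The paper treats that last step as evident and supplies no further argument; you fill the gap honestly, via the Milnor sequence and a Mittag--Leffler check on the transition maps $H^0(\CN)/\fn^{i+1}H^0(\CN)\to H^0(\CN)/\fn^iH^0(\CN)$, which is exactly where the subtlety lies (an inverse limit of right t-exact functors is only left t-exact in general, so the $R^1\lim$ term genuinely has to be killed). One small simplification: you don't need the filtration of $U(\fn)^i$ by trivial subquotients to see that each term $U(\fn)^i\otimes^L_{U(\fn)}-$ is right t-exact --- any left derived tensor with a module concentrated in degree $0$ is right t-exact --- though the filtration you exhibit is what makes the $H^0$ identification and the surjectivity of the transition maps transparent, so it is not wasted.
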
 

\begin{rem}
By unwinding the constructions one can show that for $\CM\in A\mod^\heartsuit$, the action of $A$
on 
$$H^0((\Av^N_*)^R\circ \Av^N_*(\CM))\simeq \underset{k}{\on{lim}}\, H^0(\CM/\fn^k\cdot \CM)$$
is given by the following formula:

\medskip

For an element $a\in A$, let $k$ be an integer so that 
$$\on{ad}_{\xi_1}\circ ...\circ \on{ad}_{\xi_k}(a)=0 \quad \forall \xi_1,...,\xi_k\in \fn.$$

Then the action of $a$ is well-defined as a map
$$H^0(\CM/\fn^{k'+k}\cdot \CM)\to H^0(\CM/\fn^{k'}\cdot \CM),$$
thereby giving a map on the inverse limit.
\end{rem}

\sssec{}  \label{sss:n fin}

Let $A$ be left-Noetherian. Let 
\begin{equation} \label{e:n fg}
A\mod^{\fn\on{-f.g.}}\subset A\mod
\end{equation}
be the full subcategory consisting of modules that map to compact (i.e., finitely generated) objects
under the forgetful functor $A\mod \to \fn\mod$. Note that $A\mod^{\fn\on{-f.g.}}$ is \emph{not necessarily}
contained in $A\mod^c$ (unless $A$ has a finite cohomological dimension). 

\medskip 

Let 
$$\Ind(A\mod^{\fn\on{-f.g.}})$$
be the ind-completion of $A\mod^{\fn\on{-f.g.}}$. Ind-extending the tautological embedding
$$A\mod^{\fn\on{-f.g.}}\hookrightarrow A\mod,$$
we obtain a functor 
\begin{equation} \label{e:Ind n fg}
\Ind(A\mod^{\fn\on{-f.g.}}) \to A\mod.
\end{equation}

Ind-extending the t-structure on $A\mod^{\fn\on{-f.g.}}$, we obtain one on $\Ind(A\mod^{\fn\on{-f.g.}})$.
The functor \eqref{e:Ind n fg} is t-exact, but not in general fully faithful. However, as in \cite[Proposition 2.3.3]{Ga4},
one shows that the functors
\begin{equation} \label{e:Ind n fg trunc}
\Ind(A\mod^{\fn\on{-f.g.}})^{\geq -n}\to A\mod^{\geq -n}
\end{equation}
are fully faithful for every $n$.

\medskip

Let $\Ind^\wedge(A\mod^{\fn\on{-f.g.}})$ denote the left-completion of $\Ind(A\mod^{\fn\on{-f.g.}})$
in its t-structure. Since $A\mod$ is left-complete in its t-structure, the functor \eqref{e:Ind n fg} 
extends to a functor
\begin{equation} \label{e:Ind n fg compl}
\Ind^\wedge(A\mod^{\fn\on{-f.g.}})\to A\mod.
\end{equation}

The functor \eqref{e:Ind n fg compl} is fully faithful, since the functors \eqref{e:Ind n fg trunc} have this
property. Its essential image consists of objects of $A\mod$, whose cohomologies are filtered colimits of
objects from
$$A\mod^{\heartsuit,\fn\on{-f.g.}}=A\mod^\heartsuit \cap A\mod^{\fn\on{-f.g.}}.$$

\begin{rem} \label{r:ind n fin}
Suppose that $A$ has a finite cohomological dimension, in which case the functor \eqref{e:Ind n fg}
is fully faithful, and hence so is the functor
$$\Ind(A\mod^{\fn\on{-f.g.}})\to \Ind^\wedge(A\mod^{\fn\on{-f.g.}}).$$
However, it is not clear to the authors whether the latter is an equivalence.
\end{rem} 

\sssec{}

Assume now that $A$ has a Harish-Chandra structure with respect to $G$. We claim:

\begin{prop} \label{p:J left t exact} \hfill

\smallskip

\noindent{\em(a)} The functor $J:A\mod\to A\mod^{N^-}$
is left t-exact when restricted to $A\mod^{\fn\on{-f.g.}}$. 

\smallskip

\noindent{\em(b)} \,Assume that the corresponding functor $\Upsilon$ is an equivalence. 
Then the functor $J$ is left t-exact when restricted to $\Ind^\wedge(A\mod^{\fn\on{-f.g.}})$. 
\end{prop}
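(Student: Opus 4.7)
The plan is to prove both parts by decomposing $J = \Av^{N^-}_* \circ (\Av^N_*)^R \circ \Av^N_*$ as in the definition. The outer factors $\Av^{N^-}_*$ and $\Av^N_*$ are left t-exact as right adjoints to the t-exact forgetful functors, so the crux in both parts is the left t-exactness of the endofunctor $(\Av^N_*)^R \circ \Av^N_*$, which by \propref{p:completion} and the commutative diagram \eqref{e:other diag com} is the derived $\fn$-adic completion $\CM \mapsto \underset{j}{\on{lim}}\, U(\fn)^j \otimes^L_{U(\fn)} \CM$.

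For part (a), I would take $\CM \in A\mod^{\fn\on{-f.g.}} \cap A\mod^{\geq 0}$; it has finitely many nonzero cohomologies, all in degree $\geq 0$ and finitely generated over $U(\fn)$. A devissage via the finite Postnikov filtration $\tau^{\leq n}(\CM)$ reduces the problem to the case $\CM \in A\mod^{\heartsuit}$ with $\CM$ finitely generated over $U(\fn)$. For such $\CM$, I would use a finite free resolution $P^\bullet \to \CM$ by finitely generated free $U(\fn)$-modules, available since $\fn$ is nilpotent so $U(\fn)$ has finite global dimension $\dim \fn$. Then $U(\fn)^j \otimes^L_{U(\fn)} \CM$ is computed termwise as $U(\fn)^j \otimes_{U(\fn)} P^\bullet$, in the heart. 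Since the surjective transition maps $U(\fn)^{j+1} \to U(\fn)^j$ satisfy Mittag-Leffler, $R^1\!\lim$ vanishes and the limit computes $\hat U(\fn) \otimes^L_{U(\fn)} \CM$. Flatness of the completion $\hat U(\fn)$ over $U(\fn)$ (which follows from the Artin-Rees lemma for the Noetherian algebra $U(\fn)$, valid because $\gr U(\fn) = \Sym(\fn)$ is commutative Noetherian) then identifies this with the classical $\fn$-adic completion $\hat \CM$, which lies in $A\mod^{\heartsuit}$.

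For part (b), I would invoke \propref{p:main}: since $\Upsilon$ is an equivalence, $J \simeq \Av^{N^-}_! \circ \oblv_N \circ \Av^N_*$, a composition of continuous functors (for unipotent $N$, $\oblv_N$ is fully faithful with colimit-closed essential image, so $\Av^N_*$ is continuous, and $\Av^{N^-}_!$ is continuous where defined). Given $\CM \in \Ind^\wedge(A\mod^{\fn\on{-f.g.}}) \cap A\mod^{\geq 0}$, each cohomology $H^i(\CM)$ for $i \geq 0$ is a filtered colimit of objects in $A\mod^{\heartsuit, \fn\on{-f.g.}}$. Continuity of $J$ and part (a) give $J(H^i(\CM)) \in A\mod^{N^-, \geq 0}$, whence $J(H^i(\CM)[-i]) \in A\mod^{N^-, \geq i}$. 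Induction on $n$ using the triangulated structure and the Postnikov tower yields $J(\tau^{\leq n}(\CM)) \in A\mod^{N^-, \geq 0}$ for every $n$. Finally, right-completeness of the t-structure on $A\mod$ (which follows from compatibility with filtered colimits) gives $\CM \simeq \underset{n}{\on{colim}}\, \tau^{\leq n}(\CM)$, and continuity of $J$ transports the bound to $J(\CM)$.

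The main obstacle is in part (a): ensuring that derived $\fn$-adic completion agrees with classical completion on f.g.\ $U(\fn)$-modules in the heart. This rests on the flatness of $\hat U(\fn)$ over $U(\fn)$, a standard but non-trivial consequence of Artin-Rees in the non-commutative Noetherian setting.
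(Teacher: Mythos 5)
Your proof is correct and follows essentially the same route as the paper. In part (a), after the identical reduction — peel off the left t-exact $\Av^{N^-}_*$, transfer $(\Av^N_*)^R\circ \Av^N_*$ to $\fn\mod$ via \eqref{e:other diag com}, identify it with derived $\fn$-adic completion via \propref{p:completion}, and dévisse to objects of $\fn\mod^{\on{f.g.},\heartsuit}$ — the paper simply cites ``Casselman's generalization of the Artin-Rees lemma'' for the fact that $\lim_k \CM/\fn^k\CM$ lands in the heart. You instead unpack that citation: finite projective resolution (finite global dimension of $U(\fn)$), surjectivity of the transition maps giving $R^1\!\lim = 0$ termwise so that derived limit agrees with termwise limit, and then flatness of $\hat U(\fn)$ plus $\hat U(\fn)\otimes_{U(\fn)}\CM \simeq \hat\CM$ for $\CM$ f.g. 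This is a valid derivation, but it is worth flagging that the last two steps (flatness of $\hat U(\fn)$ over the noncommutative Noetherian ring $U(\fn)$, and the identification with the classical completion for finitely generated modules) are themselves exactly the content of the Artin-Rees input; so your argument repackages rather than circumvents the paper's cited lemma. For (b), the paper's one-line reduction is ``$J$ commutes with colimits by \propref{p:main}, then use \lemref{l:Av and t}''; your version — filtered colimits in each cohomological degree, the finite Postnikov tower, and right-completeness (a consequence of compatibility with filtered colimits for $A\mod$) — is a correct explicit instance of that machinery. One small imprecision: the phrase ``computed termwise as $U(\fn)^j\otimes_{U(\fn)} P^\bullet$, in the heart'' should say that the \emph{terms} are in the heart, not the complex; but this does not affect the argument.
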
 

\begin{proof} 

First off, point (b) reduces to point (a) as follows: if $\Upsilon$ is an equivalence, then
by \propref{p:main}, the functor $J$ is commutes with colimits. Then we use \lemref{l:Av and t}. 

\medskip

We now prove point (a).

\medskip

The functor $\Av^{N^-}_*$, being the right adjoint of the t-exact functor
$\oblv_{N^-}$, is left t-exact (on all of $A\mod$). Hence, it suffices to show
that the functor $(\Av^N_*)^R\circ \Av^N_*$ is left t-exact when restricted to
$A\mod^{\fn\on{-f.g.}}$. We will show that it is in fact t-exact.

\medskip

Using the commutation of \eqref{e:other diag com}, it suffices to show that the functor
$(\Av^N_*)^R\circ \Av^N_*$ is t-exact when restricted to $\fn\mod^{\on{f.g.}}$. Using 
\propref{p:completion}, it suffices to show that the functor
$$\CM\mapsto \underset{k}{\on{lim}}\, \CM/\fn^k\cdot \CM$$
sends $\CM\in \fn\mod^{\on{f.g.},\heartsuit}$ to an object in $\Vect^\heartsuit$. However, the latter
is known: this is Casselman's generalization of the Artin-Rees lemma.

\end{proof}

\sssec{}

Note that the functor
$$\CM\mapsto H^0(\Av^{N^-}_*(\CM)), \quad A\mod^\heartsuit\to  (A\mod^{N^-})^\heartsuit$$
is that of sending $\CM$ to its submodule consisting of elements that are locally nilpotent with
respect to the action of $\fn^-$. 

\medskip

Hence, from \propref{p:J left t exact} we obtain:

\begin{cor} \hfill

\smallskip

\noindent{\em(a)} Under the assumptions of \propref{p:J left t exact}(a), the functor
$$\CM\mapsto H^0(J(\CM)), \quad (A\mod^{\fn\on{-f.g.}})^\heartsuit\to  (A\mod^{N^-})^\heartsuit$$
sends $\CM$ to the submodule of 
$\underset{k}{\on{lim}}\, \CM/\fn^k\cdot \CM$
consisting of elements that are locally nilpotent with
respect to the action of $\fn^-$. 

\smallskip

\noindent{\em(b)} Under the assumptions of \propref{p:J left t exact}(b), ditto for
the category $(\Ind^\wedge(A\mod^{\fn\on{-f.g.}}))^\heartsuit$.

\end{cor}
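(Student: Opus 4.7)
The plan is to combine the left t-exactness of $J$ from \propref{p:J left t exact} with the explicit heart-level descriptions of each factor in the composition $J=\Av^{N^-}_*\circ (\Av^N_*)^R\circ \Av^N_*$.

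For part (a), take $\CM\in (A\mod^{\fn\on{-f.g.}})^\heartsuit$. The proof of \propref{p:J left t exact}(a) already establishes, via Casselman's form of the Artin--Rees lemma, that $(\Av^N_*)^R\circ \Av^N_*$ is \emph{t-exact} (not merely left t-exact) when restricted to $A\mod^{\fn\on{-f.g.}}$. Hence by \propref{p:completion}, the object $(\Av^N_*)^R\circ \Av^N_*(\CM)$ lies in $A\mod^\heartsuit$ and is canonically isomorphic to the classical inverse limit $\underset{k}{\on{lim}}\, \CM/\fn^k\cdot \CM$. Now apply $\Av^{N^-}_*$, which is left t-exact as the right adjoint of the t-exact functor $\oblv_{N^-}$. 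Taking $H^0$ and using the description recalled immediately before the statement---namely, $H^0(\Av^{N^-}_*(-))$ applied to a heart object returns the submodule of $\fn^-$-locally nilpotent vectors---yields precisely the asserted identification.

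For part (b), the additional hypothesis that $\Upsilon$ is an equivalence lets us invoke \propref{p:main} to rewrite $J\simeq \Av^{N^-}_!\circ \oblv_N\circ \Av^N_*$, a composition of continuous functors. So $J$ commutes with filtered colimits. Any object $\CM\in(\Ind^\wedge(A\mod^{\fn\on{-f.g.}}))^\heartsuit$ is, by the description of the essential image of \eqref{e:Ind n fg compl}, a filtered colimit $\CM\simeq \underset{i}{\on{colim}}\, \CM_i$ of objects $\CM_i\in (A\mod^{\fn\on{-f.g.}})^\heartsuit$. Since filtered colimits in $(A\mod^{N^-})^\heartsuit$ are exact, we get $H^0(J(\CM))\simeq \underset{i}{\on{colim}}\, H^0(J(\CM_i))$, and part (a) identifies each term on the right as the $\fn^-$-locally nilpotent submodule of $\underset{k}{\on{lim}}\, \CM_i/\fn^k\cdot \CM_i$.

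The main obstacle, and the step where care is required, is reconciling this colimit of submodules of level-wise inverse limits with the $\fn^-$-locally nilpotent submodule of the single inverse limit $\underset{k}{\on{lim}}\, \CM/\fn^k\cdot \CM$ appearing in the statement. Inverse limits do not commute with filtered colimits in general, so the two modules $\underset{k}{\on{lim}}\,\CM/\fn^k\cdot \CM$ and $\underset{i}{\on{colim}}\,\underset{k}{\on{lim}}\,\CM_i/\fn^k\cdot\CM_i$ need not coincide. The resolution is that we only need to match their \emph{$\fn^-$-locally nilpotent parts}: a vector in $\underset{k}{\on{lim}}\,\CM/\fn^k\cdot \CM$ on which $\fn^-$ acts locally nilpotently generates a finite-dimensional $\fn^-$-subspace, hence lies (at each level $k$) in the image of some $\CM_i/\fn^k\cdot\CM_i$, which suffices to produce the required element of the colimit. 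This compatibility check, essentially a finiteness argument against Mittag--Leffler-type defects, is the only nontrivial point beyond routine bookkeeping.
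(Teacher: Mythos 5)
Your part (a) is exactly the paper's intended argument: you extract from the proof of \propref{p:J left t exact}(a) that $(\Av^N_*)^R\circ\Av^N_*$ is t-exact on $A\mod^{\fn\on{-f.g.}}$, identify it on heart objects with the classical completion via \propref{p:completion}, and then apply the left-t-exactness of $\Av^{N^-}_*$ together with the observation preceding the statement. That is correct and is essentially what the paper's one-line ``Hence, from \propref{p:J left t exact} we obtain'' is relying on.

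For part (b), you have correctly isolated the genuinely nontrivial point, which the paper does not spell out: after using the continuity of $J\simeq\Av^{N^-}_!\circ\oblv_N\circ\Av^N_*$ to write $H^0(J(\CM))\simeq\on{colim}_i H^0(J(\CM_i))$, one must identify $\on{colim}_i\bigl(\fn^-\text{-nilp.\ part of }\lim_k\CM_i/\fn^k\CM_i\bigr)$ with the $\fn^-$-nilpotent part of the single completion $\lim_k\CM/\fn^k\CM$. However, your proposed resolution does not close the gap. The finiteness argument as stated produces, for each level $k$, an index $i(k)$ such that the $k$-th component of a nilpotent vector factors through $\CM_{i(k)}/\fn^k\CM_{i(k)}$; but to produce an element of $\on{colim}_i\lim_k\CM_i/\fn^k\CM_i$ one needs a \emph{single} index $i$ (uniform over all $k$) together with a \emph{compatible} system of lifts $\tilde v_k\in\CM_i/\fn^k\CM_i$, and nothing in the local-finiteness of the $\fn^-$-action ensures such uniformity. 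Moreover, you do not address the other direction: the comparison map $\on{colim}_i\lim_k\CM_i/\fn^k\CM_i\to\lim_k\CM/\fn^k\CM$ is not injective in general (a $\on{colim}$/$\lim$ interchange), so one must also check that it becomes injective after restriction to the nilpotent parts. In short, ``which suffices to produce the required element of the colimit'' is precisely the step that is missing, and a genuine Artin--Rees/Mittag--Leffler type argument (or a different route entirely, e.g.\ establishing directly from \propref{p:J left t exact}(b) that $(\Av^N_*)^R\circ\Av^N_*(\CM)$ lies in $\CC^{\geq 0}$ with $H^0$ equal to the classical completion) would be needed to complete part (b).
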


\ssec{The Casselman-Jacquet functor for $\fg$-modules}

\sssec{}

The goal of this subsection is to prove the following:

\begin{thm} \label{t:J self-dual on g}
Consider $G$ acting on the category $\fg\mod_\chi$. \hfill

\smallskip

\noindent{\em(a)} 
There is a canonically defined natural transformation of functors
\begin{equation} \label{e:self-duality}
\Av^{N^-}_*\circ \oblv_{N}\circ  \Av^N_!\to 
\Av^{N^-}_!\circ\oblv_{N}\circ  \Av^N_*\simeq J,
\end{equation}
where the LHS is a partially defined functor. 

\smallskip

\noindent{\em(b)} The functor $\Av^N_!$ is defined and the
above natural transformation is an isomorphism, when evaluated on
$\Ind^\wedge(\fg\mod_\chi^{\fn\on{-f.g.}})$.



\end{thm}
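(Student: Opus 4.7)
My plan is to deduce \thmref{t:J self-dual on g} from the corresponding statement for twisted D-modules on $X$, \thmref{t:J self-dual on X}, which is to be established in \secref{s:Psi} via the pseudo-identity functor and Braden's theorem. The bridge is Beilinson-Bernstein localization (\thmref{t:localization}), supplemented by translation functors to handle irregular central characters.

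For part (a), I would construct the natural transformation by formal adjunction. By \propref{p:main} combined with \propref{p:Ups g}, the right-hand side of \eqref{e:self-duality} identifies with $J=\Av^{N^-}_*\circ(\Av^N_*)^R\circ\Av^N_*$, which is tautologically the right adjoint of $H:=\oblv_N\circ\Av^N_*\circ\oblv_{N^-}$. A natural transformation from the partially defined $F:=\Av^{N^-}_*\circ\oblv_N\circ\Av^N_!$ to $J$ therefore amounts, by the universal property of $J$ as a right adjoint, to a natural transformation $H\circ F\to\id$; such a map can be produced by suitably composing the counits of the adjunctions $\oblv_{N^-}\dashv\Av^{N^-}_*$ and $\oblv_N\dashv\Av^N_*$ with the unit of the partially defined adjunction $\Av^N_!\dashv\oblv_N$.

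For part (b), I would first treat the regular dominant case. Here \thmref{t:localization}(c) supplies a $G$-equivariant equivalence $\on{Loc}:\fg\mod_\chi\rightleftarrows\Dmod_\lambda(X):\Gamma(X,-)$, and \propref{p:Loc and ULA} (announced in the introduction) matches $\fg\mod_\chi^{\fn\on{-f.g.}}$ with the subcategory of $\Dmod_\lambda(X)$ consisting of objects ULA over $X\to N\backslash X$. Transporting \thmref{t:J self-dual on X} through this equivalence yields the desired isomorphism, and in particular shows $\Av^N_!$ is defined on $\fg\mod_\chi^{\fn\on{-f.g.}}$. For arbitrary $\chi$, I would pick a regular dominant $\chi'$ with $\chi'-\chi$ dominant integral and invoke the $G$-equivariant translation functor $T_{\chi'\to\chi}:\fg\mod_{\chi'}\to\fg\mod_\chi$; since it is obtained by tensoring with a finite-dimensional $G$-representation followed by projection onto the $\chi$-block, it commutes with all averaging functors appearing in the statement. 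Using \corref{c:bad trans} to control its adjoint and to verify preservation of the $\fn$-f.g.~condition, one descends the isomorphism from $\chi'$ to $\chi$.

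The extension from $\fg\mod_\chi^{\fn\on{-f.g.}}$ to its left-completed Ind-completion $\Ind^\wedge(\fg\mod_\chi^{\fn\on{-f.g.}})$ follows from \lemref{l:Av and t}: under the hypothesis of \propref{p:main} the functor $J$ commutes with filtered colimits, and the left t-exactness established in \propref{p:J left t exact} together with left completeness of the t-structure allows the passage to the Ind-completion and to limits. The main obstacle I anticipate lies in the translation step for singular $\chi$: since $\on{Loc}$ fails to be fully faithful and $T_{\chi'\to\chi}$ is merely right exact, one must verify carefully that both sides of \eqref{e:self-duality} behave compatibly under translation, that $\fn$-f.g.~is genuinely preserved in passing between $\chi$ and $\chi'$, and that essential surjectivity of $T_{\chi'\to\chi}$ on the relevant subcategory suffices to promote the isomorphism at $\chi'$ to one at $\chi$.
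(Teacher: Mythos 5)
Your overall strategy---deducing the $\fg$-module statement from its D-module analogue (\thmref{t:J self-dual on X}) via Beilinson--Bernstein localization and the matching of the $\fn$-f.g.\ and ULA conditions through \propref{p:Loc and ULA}---is the same as the paper's. However, both halves of your plan contain genuine gaps.

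For part (a), you propose producing the map \eqref{e:self-duality} purely by ``suitably composing'' units and counits of the relevant adjunctions. This cannot work: writing $F=\Av^{N^-}_*\circ\oblv_N\circ\Av^N_!$ and $H=\oblv_N\circ\Av^N_*\circ\oblv_{N^-}$, after applying the counit of $(\oblv_{N^-},\Av^{N^-}_*)$ and $\Av^N_*\circ\oblv_N\simeq\Id$ (valid since $N$ is unipotent), one is left with a map $H\circ F\to\oblv_N\circ\Av^N_!$, and the adjunction for $(\Av^N_!,\oblv_N)$ only supplies a unit $\Id\to\oblv_N\circ\Av^N_!$ pointing the wrong way. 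The natural transformation is not a piece of abstract nonsense; it originates geometrically on the D-module side, from the ULA comparison map $(f\times\id)^*(\CF_1)\overset{*}\otimes\CF_2\to(f\times\id)^*(\CF_1)\sotimes\CF_2[\,\cdot\,]$ in the proof of \thmref{t:pseudo-id and av}. The paper obtains part (a) for $\fg\mod_\chi$ by transporting the D-module transformation through $\Loc$ and $\Gamma$ (viewing the LHS as taking values in pro-objects), exactly as for (b).

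For part (b), your concern that ``$\Loc$ fails to be fully faithful and $T_{\chi'\to\chi}$ is merely right exact'' in the singular case is mistaken on both counts. By Kostant's theorem (recalled in \secref{sss:BB}), $U(\fg)_\chi\to\Gamma(X,\on{D}_\lambda)$ is an isomorphism for \emph{every} $\lambda$, so $\Loc$ is \emph{always} fully faithful (though not an equivalence when $\chi$ is singular); and $T_{\chi'\to\chi}$ is \emph{t-exact} (\secref{ss:trans}), not merely right exact. Full faithfulness of $\Loc$ is precisely the hypothesis of \propref{p:good Av!}, which lets one intertwine the partially defined $\Av_!$ functors with $\Loc$ and $\Gamma$ and transport \thmref{t:J self-dual on X}(b) directly to $\fg\mod_\chi$, uniformly in $\chi$, with no regularity assumption and no translation step. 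The translation detour you propose is therefore unnecessary, and as sketched it is also incomplete: $T_{\chi'\to\chi}$ (regular to singular) is not an equivalence, its essential surjectivity on the relevant derived subcategory is unclear, and the asserted commutation with the averaging functors would need a careful argument owing to the projector onto the $\chi$-block. Translation functors \emph{do} occur in the paper, but only inside the proof of \propref{p:Loc and ULA}(a)---and there in the opposite direction, via $T_{\chi\to\chi'}$ and \corref{c:bad trans}, to show that $\Loc$ preserves the finiteness conditions---not as a device for descending the isomorphism from $\chi'$ to $\chi$.
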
 

We will deduce \thmref{t:J self-dual on g} from the following result that will be proved in \secref{sss:self-duality of flags}.

\medskip

Let $\Dmod_\lambda(X)^{\on{ULA}}\subset \Dmod_\lambda(X)$ be the full subcategory of objects that 
are ULA with respect to the projection $X\to N\backslash X$, see \secref{sss:ULA} for what this means.
Consider the corresponding subcategory
$$\Ind^\wedge(\Dmod_\lambda(X)^{\on{ULA}})\subset \Dmod_\lambda(X),$$
see \secref{sss:Ind ULA}.

\medskip

We claim: 

\begin{thm}  \label{t:J self-dual on X}
Consider $G$ acting on the category $\Dmod_\lambda(X)$. \hfill

\smallskip

\noindent{\em(a)} 
There is a canonically defined natural transformation of functors
$$\Av^{N^-}_*\circ \oblv_{N}\circ  \Av^N_!\to 
\Av^{N^-}_!\circ\oblv_{N}\circ  \Av^N_*\simeq J,$$
where the LHS is a partially defined functor. 

\smallskip

\noindent{\em(b)} The functor $\Av^N_!$ is defined and the above natural transformation is an isomorphism, on objects 
from $\Ind^\wedge(\Dmod_\lambda(X)^{\on{ULA}})$. 

\end{thm}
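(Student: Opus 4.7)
The plan is to reduce \thmref{t:J self-dual on X} to the geometric input \thmref{t:intertw} (relating $f_!$ and $\on{Ps-Id}_{N\backslash X} \circ f_*$ along $f: X \to N\backslash X$), combined with the identification of $\on{Ps-Id}$ in terms of inverses of long intertwining functors from \corref{c:intertw}.

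First, using \propref{p:Ups X}, both $\Upsilon = \Av^N_* \circ \oblv_{N^-}$ and its swapped analogue $\Upsilon^- := \Av^{N^-}_* \circ \oblv_N$ are equivalences, with respective inverses $\Av^{N^-}_! \circ \oblv_N$ and $\Av^N_! \circ \oblv_{N^-}$. Under the identification $\Dmod_\lambda(X)^N \simeq \Dmod_\lambda(N \backslash X)$, the averagings $\Av^N_*$ and $\Av^N_!$ are the $*$- and $!$-pushforwards along $f$, so the two sides of \thmref{t:J self-dual on X} read
\[ \Av^{N^-}_* \circ \oblv_N \circ \Av^N_! \;=\; \Upsilon^- \circ \Av^N_!, \qquad J \;=\; \Upsilon^{-1} \circ \Av^N_*. \]

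Next, \thmref{t:intertw} supplies a canonical natural transformation $f_![2\dim N] \to \on{Ps-Id}_{N\backslash X} \circ f_*[2\dim X]$, which is an isomorphism precisely on ULA objects. Post-composing with $\Upsilon^-$ yields
\[ \Upsilon^- \circ \Av^N_![2\dim N] \;\longrightarrow\; \Upsilon^- \circ \on{Ps-Id}_{N\backslash X} \circ \Av^N_*[2\dim X], \]
and \corref{c:intertw} (applied in the symmetric-pair-free setting, i.e., with $M_K$ trivial) provides a canonical isomorphism $\Upsilon^- \circ \on{Ps-Id}_{N\backslash X} \simeq \Upsilon^{-1}$ up to shift, so the right-hand side becomes $\Upsilon^{-1} \circ \Av^N_* = J$. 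This produces the natural transformation of part (a) and simultaneously proves part (b) on $\Dmod_\lambda(X)^{\on{ULA}}$.

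To extend from $\Dmod_\lambda(X)^{\on{ULA}}$ to $\Ind^\wedge(\Dmod_\lambda(X)^{\on{ULA}})$, both sides commute with filtered colimits, and by the $t$-exactness and completeness properties collected in \lemref{l:Av and t}, the isomorphism at every truncation $\tau^{\geq -n}$ assembles into an isomorphism after left-completion. The substantive content rests entirely on \thmref{t:intertw} --- the geometric claim that $\on{Ps-Id}_{N\backslash X}$ intertwines $f_!$ and $f_*$ on ULA sheaves, which is the main obstacle and requires the pseudo-identity analysis carried out in the next section; everything here is a formal manipulation of the equivalences $\Upsilon$, $\Upsilon^-$ and the pseudo-identity identification.
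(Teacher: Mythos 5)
Your proof follows the paper's own strategy essentially step by step: take the natural transformation $\Av^N_!\to\on{Ps-Id}_{N\backslash X}\circ\Av^N_*[2\dim X]$ from \thmref{t:pseudo-id and av} (the body-of-paper version of \thmref{t:intertw}), post-compose with $\Upsilon^-=\Av^{N^-}_*\circ\oblv_N$, and then use \thmref{t:serre} (the body-of-paper version of \corref{c:intertw}) in the form $\Upsilon^-\circ\on{Ps-Id}_{N\backslash X}[2\dim X]\simeq\Upsilon^{-1}$ to rewrite the target as $\Upsilon^{-1}\circ\Av^N_*=J$. So the approach is the same as the paper's.

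One bookkeeping slip worth flagging: you identify $\Av^N_!$ with $f_!$ and then write an extra shift $\Upsilon^-\circ\Av^N_![2\dim N]$ on the source of the natural transformation. In fact, by the general formula $\Av^H_!\simeq f_![2\dim H]$ the shift is already absorbed into $\Av^N_!$, so the correct source is $\Upsilon^-\circ\Av^N_!$ with no extra twist — which is exactly what \thmref{t:J self-dual on X} requires. With that correction your argument matches the paper. Also note that the extension from $\Dmod_\lambda(X)^{\on{ULA}}$ to $\Ind^\wedge(\Dmod_\lambda(X)^{\on{ULA}})$ is already built into the statement of \thmref{t:pseudo-id and av}(b), where the paper handles it via continuity plus bounded cohomological amplitude of the functors involved, rather than via \lemref{l:Av and t}; both routes work.
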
 

\sssec{Proof of \thmref{t:J self-dual on g}}

Consider the adjoint functors
\begin{equation} \label{e:loc adj}
\on{Loc}:\fg\mod_\chi\rightleftarrows \Dmod_\lambda(X):\Gamma.
\end{equation}

\medskip

By \propref{p:good Av!}, the functor $\Av^{N^-}_!\circ\oblv_{N}\circ  \Av^N_*$ on $\fg\mod_\chi$
is defined and identifies with
$$\Gamma\circ \Av^{N^-}_!\circ\oblv_{N}\circ  \Av^N_*\circ \Loc.$$

Similarly, the functor $\Av^{N^-}_*\circ \oblv_{N}\circ  \Av^N_!$, viewed as taking values in
$\on{Pro}(\fg\mod_\chi^{N^-})$, identifies with
$$\Gamma\circ \Av^{N^-}_*\circ \oblv_{N}\circ  \Av^N_!\circ \Loc.$$

\medskip

Hence, point (a) of \thmref{t:J self-dual on g} follows from point (a) of \thmref{t:J self-dual on X}.

\medskip

For point (b), we will use \propref{p:good Av!} and the following assertion proved in \secref{ss:Loc and ULA}:

\begin{prop}  \label{p:Loc and ULA}  \hfill

\smallskip

\noindent{\em(a)}
The functor $\on{Loc}$ sends objects in $\Ind^\wedge(\fg\mod_\chi^{\fn\on{-f.g.}})$ to objects in 
$\Ind^\wedge(\Dmod_\lambda(X)^{\on{ULA}})$. 

\smallskip

\noindent{\em(b)}
The functor $\Gamma$ sends 
$$\Dmod_\lambda(X)^{\on{ULA}}\to \fg\mod_\chi^{\fn\on{-f.g.}}$$
and 
$$\Ind^\wedge(\Dmod_\lambda(X)^{\on{ULA}})\to 
\Ind^\wedge(\fg\mod_\chi^{\fn\on{-f.g.}}).$$



\end{prop}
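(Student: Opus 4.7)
The plan is to recognize both the ULA condition on $\Dmod_\lambda(X)$ and the $\fn$-finite-generation condition on $\fg\mod_\chi$ as compactness conditions relative to $N$-averaging, and to exploit the $G$-equivariance of the adjunction $(\on{Loc},\Gamma)$ to transport them between the two sides.

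\textbf{Uniform recasting.} I would first verify that $\CF \in \Dmod_\lambda(X)$ is ULA with respect to $f:X\to N\backslash X$ precisely when $\Av^N_*(\CF) = f_*\CF$ is compact in $\Dmod_\lambda(N\backslash X)$, equivalently when its $!$-restriction to each Schubert cell $C_w = N\cdot wB/B$ is transversally finite. Analogously, $\CM \in \fg\mod_\chi^{\fn\on{-f.g.}}$ is by definition the condition that $\oblv_{\fn}(\CM)$ is compact in $\fn\mod$. Thus both subcategories are ``$N$-averaged compactness'' conditions, and they are defined by properties that are stable under finite limits and colimits and satisfy the hypotheses of \lemref{l:Av and t}.

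\textbf{The regular case, then translation.} When $\lambda$ is regular dominant, \thmref{t:localization}(c) furnishes $\on{Loc}$ and $\Gamma$ as mutually inverse t-exact equivalences intertwining the $G$-action, so they preserve the two conditions tautologically. For singular $\lambda$ I would reduce to the regular case via the translation functors of \secref{ss:trans}: choose a dominant integral $\mu$ with $\lambda':=\lambda+\mu$ regular. Tensoring with $\CO(\mu)$ preserves ULA (it is an equivalence given by tensoring with a line bundle); on the $\fg$-side, \corref{c:bad trans} provides a resolution of the translation bimodule $T_{\chi\to\chi'}(U(\fg)_\chi)$ with terms of the form $U(\fg)_\chi\otimes V$ for finite-dimensional $G$-representations $V$, which shows that $T_{\chi\to\chi'}$ preserves $\fn$-finite generation. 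Combined with the commutative square of \secref{sss:bad trans} relating translation to tensoring by $\CO(\mu)$ under $\on{Loc}/\Gamma$, this yields both statements for singular $\lambda$.

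\textbf{Passage to $\Ind^\wedge$.} To move from the compact-object subcategories to their $\Ind^\wedge$-completions I would invoke \lemref{l:Av and t} together with the description of $\Ind^\wedge$ given in \secref{sss:n fin}: objects in these completions are characterized by having cohomologies that are filtered colimits of objects from the compact subcategory, and the ambient categories $\fg\mod_\chi$ and $\Dmod_\lambda(X)$ are left-complete. Since both the ULA condition and the $\fn$-f.g. condition are stable under filtered colimits inside a fixed cohomological degree, and since the $\Gamma$-functor commutes with limits while $\on{Loc}$ is right t-exact and commutes with colimits, the Ind-completion statement follows formally from the compact-object statement.

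The step I expect to be most delicate is the very first one, namely the reformulation of ULA as $\Av^N_*$-compactness. Directly this requires analyzing the $N$-orbit stratification of $X$ and appealing to the contraction principle of Braden's theorem used in \secref{ss:longest intertwiner}; a related subtlety is that $\on{Loc}$ has unbounded cohomological amplitude for singular $\chi$, so the translation step of the argument must be carried out at the level of $\Ind^\wedge$-completions rather than inside a bounded derived category. Once the reformulation is in place, the rest of the argument is formal manipulation with adjunctions, t-structures and the explicit bimodule of \corref{c:bad trans}.
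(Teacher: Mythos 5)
The proposal has a genuine gap at its first and most load-bearing step: the reformulation of the ULA condition as compactness of $\Av^N_*(\CF) = f_*\CF$ in $\Dmod_\lambda(N\backslash X)$ is not correct, and the rest of the argument collapses without it.

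To see that the reformulation fails, consider the degenerate case $N = \{1\}$, so $f = \on{id}$. Then ULA with respect to $f$ means $\CO_X$-coherence, i.e.\ $\CF$ is a vector bundle with flat connection; but $f_*\CF = \CF$ is compact as soon as $\CF$ is coherent over the full sheaf of differential operators, which is a strictly weaker condition. More generally, since $X$ is proper, $f_*$ preserves compactness for \emph{every} coherent $\CF$, whereas ULA is a much more restrictive condition: in the paper's formulation it asks that $\CF$ be finitely generated over the subsheaf $'\!\on{D} \subset \on{D}$ of \emph{vertical} differential operators. Your "transversally finite on Schubert cells" gloss gestures at the right intuition, but the equivalence with compactness of the pushforward is false as stated, and this is not a detail you can elide.

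Because of this, the claimed "tautological" treatment of the regular case is actually circular. Knowing that $\on{Loc}$ and $\Gamma$ are mutually inverse $G$-equivariant t-exact equivalences tells you only that $\on{Loc}$ sends $\fn$-f.g.\ objects to ULA objects \emph{if and only if} $\Gamma$ sends ULA objects to $\fn$-f.g.\ objects. To break the circle you need an independent proof of one direction, and this requires engaging with the structures directly: the paper introduces the Lie algebroid $\fn\otimes\CO_X$ with enveloping $'\!\on{D}$, records the commuting square relating $\Gamma:\Dmod_\lambda(X)\to\fg\mod_\chi$ to $\Gamma:{}'\!\Dmod(X)\to\fn\mod$, proves part (b) by passing to the associated graded and using properness of $X$, and proves part (a) in the regular case by comparing $\Loc_\fn(\CM|_\fn)$ with $\Loc(\CM)|_{'\!\on{D}}$ via a natural transformation that is surjective in degree $0$. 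None of this is captured by the compactness recasting. Your translation-functor reduction to the regular case (via $\corref{c:bad trans}$ and the commuting square from \secref{sss:bad trans}) and the passage to $\Ind^\wedge$-completions do match the paper and are sound, but they sit on top of a foundation that isn't there.
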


Now, the assertion of \thmref{t:J self-dual on g}(b) follows from that of \thmref{t:J self-dual on X}(b) and \propref{p:good Av!}. 

\ssec{ULA vs finite-generation}  \label{ss:Loc and ULA}

In this subsection we prove \propref{p:Loc and ULA}.

\sssec{}

Let $\fn\otimes \CO_X$ be the Lie algebroid on $X$ corresponding to the $\fn$-action on $X$. Let $'\!\on{D}$ be its universal
enveloping D-algebra. We have a commutative diagram
\begin{equation} \label{e:restr algebroid}
\CD
\Dmod_\lambda(X)  @>>> '\!\Dmod(X) \\
@V{\Gamma(X,-)}VV  @VV{\Gamma(X,-)}V  \\
\fg\mod_\chi  @>>>  \fn\mod,
\endCD
\end{equation} 
where the vertical arrows are taken by taking global sections, and the horizontal arrows are given by restriction.

\medskip

An object $\CM\in \Dmod_\lambda(X)$ is ULA with respect to $X\to N\backslash X$ if and only if $X|_{'\!\on{D}}$
is finitely generated. 

\sssec{}

To prove \propref{p:Loc and ULA}(b) we have to show that the right vertical arrow in \eqref{e:restr algebroid}
sends finitely generated objects to finitely generated objects. 

\medskip

The latter is is enough to do at the associated graded
level, and the assertion follows from the fact that $X$ is proper. 

\sssec{}

Let us prove \propref{p:Loc and ULA}(a). It suffices to show that 
for $\CM\in \fg\mod_\chi^\heartsuit\cap \fg\mod_\chi^{\fn\on{-f.g.}}$, the object 
$\Loc(\CM)|_{'\!\on{D}}$ has finitely generated cohomologies. 
 
\medskip

We first consider the case when $\lambda$ is dominant and \emph{regular}, in which case the functor $\on{Loc}$ is t-exact. 

\medskip

Let 
$$\Loc_\fn:\fn\mod\to '\!\Dmod(X)$$
be the functor left adjoint to $$\Gamma:{}'\!\Dmod(X)\to \fn\mod.$$ From \eqref{e:restr algebroid}, we obtain a natural transformation
$$\Loc_\fn(\CM|_{\fn})\to \Loc(\CM)|_{'\!\on{D}}.$$ 

\medskip

Moreover, for $\CM\in \fg\mod_\chi^\heartsuit$, the above map is \emph{surjective} at the level of $H^0$. Since 
$\Loc(\CM)\in \Dmod_\lambda(X)^\heartsuit$, this proves the required assertion. 

\sssec{}

We now consider the case of a general dominant $\lambda$. Let $\mu$ be a dominant integral weight such that $\lambda'=\lambda+\mu$ is regular.
Let $\chi'$ be the corresponding character of $Z(\fg)$. Consider the translation functor
$$T_{\chi\to \chi'}:\fg\mod_\chi \to \fg\mod_{\chi'},$$
and the commutative diagram
$$
\CD
\Dmod_{\lambda}(X) @>{-\otimes \CO(\mu)}>> \Dmod_{\lambda'}(X) \\
@A{\on{Loc}}AA    @A{\on{Loc}}AA  \\
\fg\mod_\chi  @>{T_{\chi\to \chi'}}>> \fg\mod_{\chi'},
\endCD
$$
see \secref{sss:bad trans}.

\medskip 

It is clear that an object $\CF\in \Dmod_{\lambda}(X)$ is ULA with respect to $X\to N\backslash X$ if and only if 
$\CF\otimes \CO(\mu)\in \Dmod_{\lambda'}(X)$ has the same property. 

\medskip

Furthermore, from the description of the functor $T_{\chi\to \chi'}$ in \corref{c:bad trans} it is clear that it sends objects in $\fg\mod_\chi^{\fn\on{-f.g.}}$
to objects in $\fg\mod_{\chi'}$ whose cohomologies are in $\fg \mod_{\chi'}^{\fn\on{-f.g.}}$. Hence, the assertion of \propref{p:Loc and ULA}(a) for 
$\lambda$ follows from that for $\lambda'$.

\begin{rem}
The above prove shows not only that $\Loc(\CM)\in \Ind^\wedge(\Dmod_\lambda(X)^{\on{ULA}})$, but that it is actually in
$\Ind(\Dmod_\lambda(X)^{\on{ULA}})$. 

\medskip

Indeed, \corref{c:bad trans} implies that $T_{\chi\to \chi'}$ sends objects in $\fg\mod_\chi^{\fn\on{-f.g.}}$ to objects 
that can be expressed as filtered colimits of objects in $\fg\mod_{\chi'}^{\fn\on{-f.g.}}$. 

\end{rem} 

\section{The pseudo-identity functor}  \label{s:Psi}

The goal of this section is to prove \thmref{t:J self-dual on X}. In the process of doing so we will introduce
the \emph{pseudo-identity functor}, which will also be the main character of the sequel to this paper. 

\ssec{The pseudo-identity functor: recollections}   \label{ss:Psi}

\sssec{}

Let $\CY$ be a quasi-compact algebraic stack with an affine diagonal, and let us be given a twisting $\lambda$ on $\CY$. 
Let $-\lambda$ denote the opposite twisting.

\medskip

We identify
$$\Dmod_\lambda(\CY)^\vee\simeq \Dmod_{-\lambda}(\CY)$$
via Verdier duality. This allows to identify 
the category
$$\Dmod_{-\lambda,\lambda}(\CY\times \CY)$$ 
with that of continuous endofunctors on $\Dmod_\lambda(\CY)$. Explicitly,
$$\CQ\in  \Dmod_{-\lambda,\lambda}(\CY\times \CY)\, \mapsto\, F_\CQ, \quad
F_\CQ(\CF):=(p_2)_*(\CQ\sotimes (p_1)^!(\CF)),$$
where $p_1,p_2$ are the two projections $\CY\times \CY\rightrightarrows \CY$, and where for a morphism $f$ we denote by
$f_*$ the \emph{renormalized} direct image functor (see \cite[Sect. 9.3]{DrGa1}).

\medskip

Under this identification, the identity functor corresponds to
$$(\Delta_\CY)_*(\omega_\CY)\in \Dmod_{-\lambda,\lambda}(\CY\times \CY),$$
where we note that the functor
$$\Delta_*:\Dmod(\CY)\to \Dmod_{-\lambda,\lambda}(\CY\times \CY)$$ 
is well-defined because the pullback of the $(-\lambda,\lambda)$-twisting along the diagonal map is canonically trivialized.

\sssec{}

The pseudo-identity functor
$$\on{Ps-Id}_\CY:\Dmod_\lambda(\CY)\to \Dmod_\lambda(\CY)$$
is the functor corresponding to the object
$$(\Delta_\CY)_!(k_\CY)\in \Dmod_{-\lambda,\lambda}(\CY\times \CY),$$
where $k_\CY$ is the ``constant sheaf" on $\CY$, i.e., the D-module Verdier dual to $\omega_\CY$. 

\sssec{}

A stack $\CY$ equipped with a twisting $\lambda$ is said to be \emph{miraculous} if the endofunctor
$\on{Ps-Id}_\CY$ is an equivalence. 

\medskip

The following will be proved in the sequel to this paper (however, we do not use this result here): 

\begin{thm}  \label{t:miraculous}
Suppose that $\CY$ has a finite number of isomorphism classes of $k$-points. Then $\CY$ is miraculous.
\end{thm}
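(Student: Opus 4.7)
The plan is to establish the theorem by induction on a finite stratification of $\CY$ whose strata are classifying stacks. Since $\CY$ is quasi-compact with affine diagonal and has only finitely many isomorphism classes of $k$-points, one can exhibit a finite stratification
$$\CY = \bigsqcup_\alpha \CY_\alpha$$
where each $\CY_\alpha$ is the residual gerbe of a $k$-point, i.e., of the form $BH_\alpha$ for $H_\alpha$ an affine algebraic group over $k$ (namely the automorphism group of the point). Thus the proof reduces to two independent steps: (i) a devissage principle showing that the miraculous property is preserved under open-closed gluings, and (ii) the base case $\CY = BH$.

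For the devissage step, given an open-closed decomposition $j: \CU \hookrightarrow \CY \hookleftarrow \CZ : i$, the product $\CY \times \CY$ stratifies into the four pieces $\CU \times \CU$, $\CU \times \CZ$, $\CZ \times \CU$, $\CZ \times \CZ$, and the diagonal $\Delta_\CY$ lands only in the outer two. The plan is to exploit this geometry to decompose the kernel $(\Delta_\CY)_!(k_\CY)$ into a distinguished triangle whose outer terms are the (further) $!$-pushforwards, along the appropriate locally closed embeddings $\CU \times \CU \hookrightarrow \CY \times \CY$ and $\CZ \times \CZ \hookrightarrow \CY \times \CY$, of the kernels $(\Delta_\CU)_!(k_\CU)$ and $(\Delta_\CZ)_!(k_\CZ)$. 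Translating this triangle of kernels into a triangle of endofunctors on $\Dmod_\lambda(\CY)$, together with the analogous decomposition of the identity kernel $(\Delta_\CY)_*(\omega_\CY)$, and applying five-lemma type reasoning, one obtains that if $\on{Ps-Id}_\CU$ and $\on{Ps-Id}_\CZ$ are equivalences then so is $\on{Ps-Id}_\CY$.

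For the base case $\CY = BH$, one has $\Dmod_\lambda(BH) \simeq \Rep_\lambda(H)$, and the diagonal $\Delta : BH \to BH \times BH$ is presented by the $H$-torsor whose total space, upon pullback, recovers the loop stack $\Omega(BH) \simeq H$ with conjugation action. The plan is to compute $(\Delta_{BH})_!(k_{BH})$ directly, identify the resulting endofunctor of $\Rep(H)$ as tensoring with an invertible object (a shift by $2\dim H$ together with a twist by $\det(\fh)^\vee$), and conclude that $\on{Ps-Id}_{BH}$ is an equivalence. This is essentially a Poincar\'e-duality statement for the classifying stack.

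The principal obstacle is the devissage step, because the kernel $(\Delta_\CY)_!(k_\CY)$ is built from the $!$-pushforward of the constant sheaf and hence does not enjoy the standard base-change properties of the $*$-pushforward of dualizing complexes that are used to define $\on{Id}$. One must verify carefully that the restrictions of this kernel to the pieces of $\CY \times \CY$ genuinely match the pseudo-identity kernels of $\CU$ and $\CZ$ (including the correct twistings inherited from $\lambda$), and that the triangle of kernels converts into a triangle of functors compatible with the corresponding triangle built from the identity functor. A secondary, more technical difficulty in the base case $BH$ is bookkeeping of shifts and twistings, in order to ensure the final comparison gives an honest equivalence and not merely a conservative functor.
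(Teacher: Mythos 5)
The paper does not contain a proof of Theorem~\ref{t:miraculous}. Immediately before the statement, the authors write: ``The following will be proved in the sequel to this paper (however, we do not use this result here).'' So there is no argument in this paper against which your proposal can be compared; the theorem is recorded for context only.

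That said, let me assess the proposal on its own terms. The global strategy (stratify into residual gerbes; establish a devissage for the miraculous property; handle $BH$ directly) is a sensible outline, and the base case $BH$ is indeed an exercise in Poincar\'e duality (one checks that $\on{Ps-Id}_{BH}$ is tensoring with an invertible object, namely a shift by $2\dim H$ and a twist by $\det(\fh)$). But the devissage step, as you have written it, has a genuine gap, not just a bookkeeping nuisance. The triangle of kernels
$$(j\times j)_!\bigl((\Delta_\CU)_!(k_\CU)\bigr)\ \to\ (\Delta_\CY)_!(k_\CY)\ \to\ (i\times i)_*\bigl((\Delta_\CZ)_!(k_\CZ)\bigr)$$
does convert into a triangle of endofunctors $F_1\to \on{Ps-Id}_\CY\to F_2$, and the closed term does come out cleanly: using the projection formula and the fact that $i\times i$ is proper, one gets $F_2\simeq i_*\circ \on{Ps-Id}_\CZ\circ i^!$. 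The open term does not. One would like to claim $F_1\simeq j_!\circ \on{Ps-Id}_\CU\circ j^!$, but the computation runs into the composite $(p_2)_*\circ(j\times j)_!$, and $*$-pushforward does not commute with $!$-pushforward from an open substack. Concretely, while $j^!\circ F_1\simeq \on{Ps-Id}_\CU\circ j^!$ does hold (this follows from base change applied to $j^!(p_2)_*$), one finds $i^!\circ F_1=0$ rather than $i^*\circ F_1=0$. So $F_1$ lands in $\ker(i^!)=\on{im}(j_*)$ and not, as your argument needs, in $\ker(i^*)=\on{im}(j_!)$. The ``five-lemma'' step therefore does not apply: the outer terms of your functor triangle are not the pseudo-identity functors of the strata extended by $j_!$ and $i_*$, so knowing $\on{Ps-Id}_\CU$ and $\on{Ps-Id}_\CZ$ are equivalences does not, by this route, yield the conclusion for $\CY$. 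What one actually needs is a pair of \emph{intertwining} statements, of the shape $\on{Ps-Id}_\CY\circ j_*\simeq j_!\circ\on{Ps-Id}_\CU$ and $\on{Ps-Id}_\CY\circ i_!\simeq i_*\circ\on{Ps-Id}_\CZ$ (compare this with \thmref{t:pseudo-id and av} of the present paper, which is exactly such an intertwining statement in the proper smooth case). These intertwining isomorphisms are nontrivial to establish; once in hand, a recollement argument does conclude. Your proposal correctly isolates the danger but does not resolve it, and the naive ``triangle of kernels implies triangle of recognizable functors'' formulation is, as stated, false.
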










\ssec{Pseudo-identity, averaging and the ULA property}

\sssec{}  \label{sss:ULA}

Let $f:\CZ\to \CY$ be a smooth morphism between smooth algebraic stacks. Let us recall what it means for
an object $\CF\in \Dmod_\lambda(\CZ)$ to be ULA with respect to $f$, see, e.g. \cite[Sect. 3.4]{Ga1}.

\medskip

The property of being ULA is local in the smooth topology on the source and the target, so we can assume
that $\CZ=Z$ and $\CY=Y$ are schemes. 

\medskip

In this case, the sheaf of rings $\on{D}$ of differential operators
on $Z$ contains a subsheaf of rings, denoted $'\!\on{D}$, consisting of differential operators \emph{vertical}
with respect to $f$ (i.e., these are those differential operators that commute with functions pulled-back from $Y$).
Locally, $'\!\on{D}$ is generated by functions and vertical fields that are parallel to the fibers of $f$. 

\medskip

We shall say that $\CM\in \Dmod(Z)$ is ULA with respect to $f$ if it is finitely generated when considered as a $'\!\on{D}$-module (i.e. it has finitely many non-zero cohomologies, and its cohomologies are locally finitely generated over $'\!\on{D}$).

\begin{rem}
The above definition of 
ULA can be thought of as the $D$-module version of the one given in
 \cite[Sect. 5]{BG} in the setting of \'etale sheaves.
 \end{rem}

\sssec{}  \label{sss:Ind ULA}

Let $\Dmod_\lambda(\CZ)^{\on{ULA}}\subset \Dmod_\lambda(\CZ)$ be the full subcategory that consists of objects that are
ULA with respect to $f$.

\medskip

If $\CZ$ is a scheme, then $\Dmod_\lambda(\CZ)$ has finite cohomological dimension, and we have
$$\Dmod_\lambda(\CZ)^{\on{ULA}}\subset \Dmod_\lambda(\CZ)^c.$$

\medskip

As in \secref{sss:n fin} we define the corresponding functor
$$\Ind(\Dmod_\lambda(\CZ)^{\on{ULA}})\to \Dmod_\lambda(\CZ)$$
and a fully faithful embedding 
$$\Ind^\wedge(\Dmod_\lambda(\CZ)^{\on{ULA}})\hookrightarrow  \Dmod_\lambda(\CZ),$$
whose essential image consists of objects
whose cohomologies are filtered colimits of objects from
$$\Dmod_\lambda(\CZ)^{\heartsuit,\on{ULA}}:=\Dmod_\lambda(\CZ)^\heartsuit\cap \Dmod_\lambda(\CZ)^{\on{ULA}}.$$

\begin{rem}
We note that as in Remark \ref{r:ind n fin} it is not clear to the authors whether, when $\CZ$ is a scheme, the inclusion
$\Ind(\Dmod_\lambda(\CZ)^{\on{ULA}})\subset \Ind^\wedge(\Dmod_\lambda(\CZ)^{\on{ULA}})$
is actually an equivalence. 
\end{rem}

\sssec{}

We take $X$ to be a smooth proper scheme acted on by a group $H$, and we take $\CZ=X$ and $\CY=H\backslash X$.

\medskip

We claim:

\begin{thm} \label{t:pseudo-id and av}
Let $\lambda$ be a $H$-equivariant twisting on $X$. \hfill

\smallskip

\noindent{\em(a)}
There exists a canonically defined natural transformation
\begin{equation} \label{e:pseudo-id and av}
\Av^H_!\to \on{Ps-Id}_{H\backslash X}\circ \Av^H_*[2\dim(X)],
\end{equation} 
(where the left-hand side is a partially defined functor).

\smallskip

\noindent{\em(b)}
The map \eqref{e:pseudo-id and av} is an isomorphism when evaluated on 
objects from $\Ind^\wedge(\Dmod_\lambda(\CZ)^{\on{ULA}})$. 

\end{thm}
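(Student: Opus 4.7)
The plan is to compute $\on{Ps-Id}_{H\backslash X}\circ \Av^H_*$ explicitly via base change, exhibit a canonical comparison map with $\Av^H_!$, and verify that the ULA hypothesis promotes it to an isomorphism.

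Set $\CY := H\backslash X$ and $f: X \to \CY$ the quotient map. I identify $\Av^H_* \simeq f_*$ and $\Av^H_! \simeq f_![2\dim H]$, so that the stated shift $[2\dim X]$ decomposes as $[2\dim H] + [2\dim \CY]$; part (a) is then equivalent to constructing a natural transformation $f_! \to \on{Ps-Id}_\CY \circ f_*[2\dim\CY]$ with the stated properties. The key geometric input is the Cartesian square
$$
\begin{CD}
H \times X @>{\tilde\Delta}>> X \times X \\
@V{\pi}VV @VV{f\times f}V \\
\CY @>{\Delta_\CY}>> \CY \times \CY,
\end{CD}
$$
where $\tilde\Delta(h,x)=(hx,x)$. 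Plugging $\CG = f_*(\CF)$ into the formula $\on{Ps-Id}_\CY(\CG) = (p_2)_*((\Delta_\CY)_!(k_\CY) \sotimes p_1^!(\CG))$ and repeatedly applying $!$-$*$ base change (together with the projection formula) rewrites the composition in terms of operations on $H \times X$ involving the action map $a$ and the projection $p_2$. In parallel, base change on
$$
\begin{CD}
H \times X @>{a}>> X \\
@V{p_2}VV @VV{f}V \\
X @>{f}>> \CY
\end{CD}
$$
describes $f^* \Av^H_!(\CF)$ in an analogous form.

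For part (a), the natural transformation is induced by the canonical map of kernels
$$
(\Delta_\CY)_!(k_\CY)\;\longrightarrow\;(\Delta_\CY)_*(\omega_\CY)[-2\dim \CY]
$$
(i.e.\ the comparison $\on{Ps-Id}_\CY \to \on{Id}_{\Dmod_\lambda(\CY)}[-2\dim\CY]$), combined with the tautological $f_!\to f_*$ transformation, packaged via the base-change computation above. Properness of $X$ ensures that $\Av^H_!$ is defined everywhere (not merely as a partially defined functor).

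For part (b), the ULA condition means that $\CF$ is finitely generated as a module over the sheaf $'\!\on{D}$ of differential operators vertical for $f$, i.e.\ that $\CF$ is $f$-compact in the appropriate derived sense. This is precisely what promotes the projection formulas and $!$-versus-$*$ base changes used in Step 1 from natural transformations to isomorphisms, so that the comparison map becomes invertible on objects of $\Dmod_\lambda(X)^{\on{ULA}}$. Extending from $\Dmod_\lambda(X)^{\on{ULA}}$ to its left-completed ind-extension $\Ind^\wedge(\Dmod_\lambda(X)^{\on{ULA}})$ then proceeds by appealing to \lemref{l:Av and t}, provided one checks the requisite left t-exactness of $\on{Ps-Id}_\CY\circ f_*$ on the relevant subcategory.

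The main obstacle is the ULA-based promotion in part (b): rigorously verifying that the comparison map of part (a) becomes an isomorphism under the ULA hypothesis, which requires a delicate analysis of how the ULA condition interacts with the groupoid $H\times X\rightrightarrows X$ and the kernel $(\Delta_\CY)_!(k_\CY)$. A cleaner route may be to reduce, via Verdier duality, to the statement that ULA objects pair well with the Verdier dual of the Ps-Id kernel, sidestepping the direct groupoid analysis. A secondary but nontrivial difficulty is tracking shifts and the twistings $\lambda,-\lambda$ consistently throughout the base-change manipulations.
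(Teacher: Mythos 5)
Your structural outline — rewrite $\on{Ps-Id}_{H\backslash X}\circ\Av^H_*$ via base change, compare with $\Av^H_!$ also expressed via a Cartesian square, and invoke the ULA hypothesis — is in the right spirit and resembles the paper's strategy. However, there are two substantive gaps.

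First, your identification of the source of the natural transformation is wrong. You propose that \eqref{e:pseudo-id and av} comes from composing the comparison $\on{Ps-Id}_\CY \to \on{Id}[-2\dim\CY]$ (induced by $k_\CY \to \omega_\CY[-2\dim\CY]$) with the tautological $f_!\to f_*$. But trace through the directions: $f_![2\dim H]\to f_*[2\dim H]$ and $\on{Ps-Id}_\CY\circ f_*[2\dim X]\to f_*[2\dim H]$ both map \emph{into} $f_*[2\dim H]$; neither composition gives a map from $\Av^H_!$ \emph{to} $\on{Ps-Id}_\CY\circ\Av^H_*[2\dim X]$. The actual source of the transformation in the paper is different and is the real content of the argument: after base change along the square $X\to X\times H\backslash X\to H\backslash X\times H\backslash X$ (pulling back the diagonal along $f\times\on{id}$, not $f\times f$), both functors reduce to tensoring $(\wt\Delta_{H\backslash X})_!(k_X)$ against $\wt p_1^!(\CF)$, once with $\overset{*}\otimes$ and once with $\sotimes$ (up to a shift). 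The natural transformation, and the fact that it becomes an isomorphism under ULA, come from the pairing in \cite[Sect.~2.3]{Ga1}: for $\CF_1$ pulled back along a smooth map $g$ and $\CF_2$ ULA with respect to $g$, there is a canonical map $g^*(\CF_1)\overset{*}\otimes\CF_2\to g^*(\CF_1)\sotimes\CF_2[\text{shift}]$ which is an isomorphism. Your outline never isolates this; without it, part~(b) has no mechanism, which is consistent with your admitting you cannot complete it.

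Second, your remark that properness of $X$ ensures $\Av^H_!$ is defined everywhere is not correct: $f:X\to H\backslash X$ is not proper (it is a smooth quotient map), so $f_!$ remains only partially defined. Properness of $X$ enters the paper's argument only to guarantee that the projection $\wt p_2: X\times H\backslash X\to H\backslash X$ is proper, hence $(\wt p_2)_!\simeq(\wt p_2)_*$. The well-definedness of $\Av^H_!$ on ULA objects is itself a consequence of the isomorphism, not an independent input. Finally, your Cartesian square with $H\times X\to X\times X$ is a genuine alternative, but it is not used by the paper and you have not shown that it leads to the required comparison; the paper's square pulling back only one leg is what cleanly reduces both sides to tensors on $X\times H\backslash X$.
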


\sssec{Proof of \thmref{t:pseudo-id and av}, Step 0}

Consider the Cartesian diagram 
$$
\CD
X  @>{\wt\Delta_{H\backslash X}}>>  X\times H\backslash X  @>{\wt{p}_1}>>  X \\
@V{f}VV   @VV{f\times \on{id}}V   @VV{f}V \\ 
H\backslash X  @>{\Delta_{H\backslash X}}>> H\backslash X\times H\backslash X  @>{p_1}>>  H\backslash X  \\
& & @VV{p_2}V \\
& & H\backslash X. 
\endCD
$$

For $\CF\in \Dmod_\lambda(X)$, the object $\on{Ps-Id}_{H\backslash X}\circ \Av^H_*(\CF)$ identifies with
\begin{multline*}
(p_2)_*\left((\Delta_{H\backslash X})_!(k_{H\backslash X})\sotimes (p_1^!\circ f_*(\CF))\right)\simeq 
(p_2)_*\left((\Delta_{H\backslash X})_!(k_{H\backslash X})\sotimes  ((f\times \on{id})_*\circ \wt{p}_1^!(\CF))\right)\simeq \\
\simeq 
(p_2\circ (f\times \on{id}))_*\left((f\times \on{id})^!\circ (\Delta_{H\backslash X})_!(k_{H\backslash X}) \sotimes \wt{p}_1^!(\CF)\right)\overset{\text{smooth base change}}\simeq \\
\simeq (p_2\circ (f\times \on{id}))_*\left((\wt\Delta_{H\backslash X})_!\circ f^!(k_{H\backslash X})\sotimes \wt{p}_1^!(\CF)\right) \simeq \\
\simeq (\wt{p}_2)_*\left((\wt\Delta_{H\backslash X})_!(k_X)\sotimes \wt{p}_1^!(\CF)\right)[2\dim(H)],
\end{multline*} 
where $\wt{p}_2=p_2\circ (f\times \on{id})$, and we have used the fact that
$$f^!(k_{H\backslash X})\simeq k_X[2\dim(H)].$$

\medskip

Note also that
\begin{multline*}
\Av^H_!(\CF)\simeq f_!(\CF)[2\dim(H)]
=(\wt{p}_2)_!\circ (\wt{\Delta}_{H\backslash X})_!(\CF)[2\dim(H)] =\\
=(\wt{p}_2)_!\circ (\wt{\Delta}_{H\backslash X})_!\circ 
(\wt{\Delta}_{H\backslash X})^*\circ \wt{p}_1^*(\CF)[2\dim(H)]\overset{\text{projection formula}}\simeq \\
\simeq (\wt{p}_2)_!\left( (\wt{\Delta}_{H\backslash X})_!(k_X)\overset{*}\otimes \wt{p}_1^*(\CF)\right)[2\dim(H)].
\end{multline*} 

\sssec{Proof of \thmref{t:pseudo-id and av}, Step 1}

Thus we are reduced to considering the diagram
$$
\CD
X  @>{\wt\Delta_{H\backslash X}}>>  X\times H\backslash X  @>{\wt{p}_1}>>  X \\
& & @V{\wt{p}_2}VV  \\
& & H\backslash X.
\endCD
$$

Since $X$ was assumed proper, the map $\wt{p}_2$ is proper, hence
$(\wt{p}_2)_!\simeq (\wt{p}_2)_*$. The map $\wt{p}_1$ is smooth of relative dimension $\dim(X)-\dim(H)$, so
$$\wt{p}_1^*(\CF)\simeq \wt{p}_1^!(\CF)[-2(\dim(X)-\dim(H))].$$

\medskip

Hence, it suffices to construct a natural transformation
\begin{equation} \label{e:ULA morphism}
(\wt{\Delta}_{H\backslash X})_!(k_X)\overset{*}\otimes \wt{p}_1^!(\CF)\to 
(\wt\Delta_{H\backslash X})_!(k_X)\sotimes \wt{p}_1^!(\CF)[2(2\dim(X)-\dim(H))]
\end{equation} 
and show that it is an isomorphism if $\CF$ is ULA with respect to $f$. 

\sssec{Proof of \thmref{t:pseudo-id and av}, Step 2}

Consider the morphism
$$(f\times \on{id}):X\times H\backslash X \to H\backslash X\times H\backslash X.$$

\medskip

For any $\CF_1\in \Dmod_{-\lambda,\lambda}(H\backslash X\times H\backslash X)$ and
$\CF_2\in \Dmod_{-\lambda,\lambda}(X\times H\backslash X)$ 
we have the canonical map (see \cite[Sect. 2.3]{Ga1}):
$$(f\times \on{id})^*(\CF_1) \overset{*}\otimes \CF_2 \to (f\times \on{id})^*(\CF_1) \sotimes \CF_2[2(2\dim(X)-\dim(H))].$$

This morphism is an isomorphism for $\CF_2$ that is ULA with respect to $f\times \on{id}$. Since all the functors involved are 
continuous, this remains true if $\CF_2$ is a colimit of ULA objects. Further, since the functors
involved have a bounded cohomological amplitude, the same is true if the 
cohomologies of $\CF_2$ have this property.  

\medskip

We take $\CF_2=\wt{p}_1^!(\CF)$. The assumption that $\CF$ has cohomologies that are ULA with respect to $f$
implies that $\CF_2$ has the same property with respect to $f\times \on{id}$. 

\medskip

We take $\CF_1=(\Delta_{H\backslash X})_!(k_{H\backslash X})$. Then 
$$(f\times \on{id})^*(\CF_1)\simeq (\wt\Delta_{H\backslash X})_!(k_X).$$

This yields the desired (iso)morphism \eqref{e:ULA morphism}.

\qed

\begin{rem}

In Theorem \ref{t:pseudo-id and av}, we could have taken $X$ to be a smooth and proper scheme, and 
$f: X \to \CY$ a smooth map (not necessarily a quotient map). Of course, 
$\on{Av}_*^H$ would mean $f_*$, while $\on{Av}_!^H$ would mean the (partially defined) left adjoint of $f^*$, i.e. 
$f_! [2(\dim (X) - \dim (\CY))]$.

\end{rem}

\ssec{A variant}

\sssec{}

Let $H'\subset H$ be a subgroup. Consider the forgetful functor
$$\oblv_{H/H'}:\Dmod_\lambda(H\backslash X)\to \Dmod_\lambda(H'\backslash X).$$
Its right adjoint $\Av^{H/H'}_*$ is given by *-direct image along 
$$H'\backslash X \to H\backslash X,$$
and the partially defined left adjoint $\Av^{H/H'}_!$ is given by !-direct image along the above morphism, shifted by 
$2(\dim(H)-\dim(H'))$. 

\sssec{}

Let $H'_{\on{red}}$ denote the reductive quotient of $H'$. We have:

\begin{thm} \label{t:pseudo-id and av rel} \hfill

\smallskip

\noindent{\em(a)}
There exists a canonically defined natural transformation
\begin{equation} \label{e:pseudo-id and av rel}
\Av^{H/H'}_!\to \on{Ps-Id}_{H\backslash X}\circ \Av^{H/H'}_*[2\dim(X)-\dim(H'_{\on{red}})],
\end{equation} 
(where the left-hand side is a partially defined functor).

\smallskip

\noindent{\em(b)}
The map \eqref{e:pseudo-id and av rel} is an isomorphism when evaluated on 
objects from $\Ind^\wedge(\Dmod_\lambda(H'\backslash X)^{\on{ULA}})$, where the ULA condition 
is taken with respect to the projection $f:H'\backslash X\to H\backslash X$.

\end{thm}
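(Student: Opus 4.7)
The plan is to adapt the proof of \thmref{t:pseudo-id and av} essentially verbatim, with the scheme $X$ replaced by the algebraic stack $H'\backslash X$ and the quotient map $X\to H\backslash X$ replaced by the smooth morphism $f:H'\backslash X\to H\backslash X$ of relative dimension $\dim H-\dim H'$. The shift $2\dim(X)-\dim(H'_{\on{red}})$, as opposed to the shift $2\dim(X)$ appearing in the original theorem, will arise entirely from the non-properness of the stack $BH'$.

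Concretely, I first form the Cartesian square
$$
\CD
H'\backslash X  @>{\wt\Delta}>>  H'\backslash X \times H\backslash X \\
@V{f}VV   @VV{f\times \on{id}}V \\
H\backslash X  @>{\Delta_{H\backslash X}}>>  H\backslash X \times H\backslash X
\endCD
$$
together with the projections $\wt p_1:H'\backslash X\times H\backslash X\to H'\backslash X$ (first projection) and $\wt p_2:H'\backslash X\times H\backslash X\to H\backslash X$ (projection to the second factor). Running through Step 0 of the original proof with smooth base change and the identity $f^!(k_{H\backslash X})\simeq k_{H'\backslash X}[2(\dim H-\dim H')]$ (coming from smoothness of $f$) yields
$$\on{Ps-Id}_{H\backslash X}\circ \Av^{H/H'}_*(\CF) \simeq (\wt p_2)_*\bigl((\wt\Delta)_!(k_{H'\backslash X}) \sotimes \wt p_1^!(\CF)\bigr)[2(\dim H-\dim H')]$$
and, via the projection formula,
$$\Av^{H/H'}_!(\CF) \simeq (\wt p_2)_!\bigl((\wt\Delta)_!(k_{H'\backslash X}) \overset{*}\otimes \wt p_1^*(\CF)\bigr)[2(\dim H-\dim H')].$$

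Next I invoke the ULA-based $\ast\to{!}$ comparison of Step 2 of the original proof: since $\wt p_1$ is smooth and $\CF$ is (Ind-)ULA with respect to $f$, the sheaf $\wt p_1^!(\CF)$ is (Ind-)ULA with respect to $f\times\on{id}$, whence the canonical map
$$(f\times\on{id})^*\bigl((\Delta_{H\backslash X})_!(k_{H\backslash X})\bigr) \overset{*}\otimes \wt p_1^!(\CF) \to (f\times\on{id})^*\bigl((\Delta_{H\backslash X})_!(k_{H\backslash X})\bigr) \sotimes \wt p_1^!(\CF)[2\dim(H'\backslash X \times H\backslash X)]$$
is an isomorphism. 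Combining this with the base-change identity $(f\times\on{id})^*\circ (\Delta_{H\backslash X})_!\simeq (\wt\Delta)_!$ and with $\wt p_1^*\simeq \wt p_1^![-2(\dim X-\dim H)]$ reduces both (a) and (b) to a single comparison between $(\wt p_2)_!$ and $(\wt p_2)_*$, with a shift of $[2\dim H'-\dim H'_{\on{red}}]$ on objects of the form $(\wt\Delta)_!(k_{H'\backslash X})\sotimes \wt p_1^!(\CF)$.

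This last comparison is the one place where the proof genuinely deviates from the scheme case: in the original, properness of $X$ made $\wt p_2$ outright proper, hence $(\wt p_2)_!\simeq (\wt p_2)_*$, whereas here the fibers of $\wt p_2$ are the non-proper stack $H'\backslash X$. Factoring $\wt p_2$ as the representable proper morphism $H'\backslash X \times H\backslash X \to BH' \times H\backslash X$ (which contributes no shift, as $X\to\on{pt}$ is proper) composed with $\pi\times\on{id}:BH'\times H\backslash X\to H\backslash X$, the desired shift reduces to the renormalized Poincar\'e-duality relation $\pi_!\simeq \pi_*[2\dim H'-\dim H'_{\on{red}}]$ for $\pi:BH'\to\on{pt}$ -- a manifestation of the non-properness of $BH'$, precisely controlled by the unipotent radical of $H'$, in the renormalized D-module formalism of \cite[Sect.~9.3]{DrGa1}. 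The main obstacle is establishing this last shift with the correct sign and precision; once it is in hand, the bounded-amplitude and Ind-completeness arguments in the proof of \thmref{t:pseudo-id and av}(b) carry over unchanged to establish part (b).
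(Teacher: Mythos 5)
Your proposal is correct and takes essentially the same approach as the paper: the paper states that the proof of Theorem~\ref{t:pseudo-id and av} repeats verbatim once one has the lemma $p_*\simeq p_![-2\dim(H')+\dim(H'_{\on{red}})]$ for $p:H'\backslash X\to\on{pt}$, proved by factoring $p$ through $BH'$ and reducing to the case $X=\on{pt}$. Your factorization of $\wt{p}_2$ through $BH'\times H\backslash X$ is exactly that same step carried out in place, and your shift bookkeeping checks out.
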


\sssec{}

The proof repeats verbatim that of \thmref{t:pseudo-id and av rel} with the following modification: 

\begin{lem}
Let $X$ be a proper scheme acted on by a group $H'$. Let $p$ denote the projection
$H'\backslash X\to \on{pt}$. Then we have a canonical isomorphism of functors
$$p_*\simeq p_![-2\dim(H')+\dim(H'_{\on{red}})].$$
\end{lem}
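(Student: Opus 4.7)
The plan is to reduce the lemma to a calculation on the classifying stack $BH'$. The projection $p$ factors as
$$H'\backslash X \xrightarrow{\pi} BH' \xrightarrow{\bar p} \on{pt},$$
where $\pi$ is induced by the $H'$-equivariant map $X \to \on{pt}$. Base-changing $\pi$ along the canonical atlas $\on{pt} \to BH'$ (which is an $H'$-torsor) yields the morphism $X \to \on{pt}$, which is proper by hypothesis. Hence $\pi$ is representable and proper, and the renormalized direct image coincides with the exceptional one, $\pi_* \simeq \pi_!$. Composing with $\bar p_*$ and $\bar p_!$, the content of the lemma reduces to the assertion
$$\bar p_* \simeq \bar p_![-2\dim(H') + \dim(H'_{\on{red}})]$$
for $\bar p: BH' \to \on{pt}$, a statement that depends only on the group $H'$.

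To establish this reduced statement, use the short exact sequence $1 \to U \to H' \to H'_{\on{red}} \to 1$, where $U$ is the unipotent radical of $H'$, and the induced factorization
$$BH' \xrightarrow{\pi'} BH'_{\on{red}} \xrightarrow{\bar{\bar p}} \on{pt}.$$
For the reductive stage $\bar{\bar p}$, the category $\Rep(H'_{\on{red}})$ is semisimple by linear reductivity in characteristic zero, so derived invariants coincide with derived coinvariants and one obtains $\bar{\bar p}_* \simeq \bar{\bar p}_![s_{\on{red}}]$, with $s_{\on{red}}$ determined by the virtual dimension $-\dim(H'_{\on{red}})$ of $BH'_{\on{red}}$ together with the triviality of $\det\on{Ad}_{H'_{\on{red}}}$ (standard for connected reductive groups). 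For the unipotent gerbe $\pi'$, reduce by a filtration argument along a composition series of $U$ to the case $U \simeq \BG_a$, and compute the shift $s_U$ directly from the Koszul-type duality on $\Dmod(B\BG_a)$ between $\on{Sym}^*(\fu[-1])$ and the exterior algebra $\wedge^* \fu^*$. The sum $s_{\on{red}} + s_U$ reproduces $-2\dim(H') + \dim(H'_{\on{red}})$.

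The main obstacle will be the careful bookkeeping of the two cohomological shifts $s_{\on{red}}$ and $s_U$ so that they combine to exactly the formula in the lemma. This calculation is sensitive to the renormalization conventions for $f_*$ used throughout the paper, which themselves build in virtual-dimension shifts associated to non-smooth or non-proper stack maps. Once each local shift is pinned down, the assembly into an isomorphism of functors (rather than merely a shift on a single test object) is automatic, by naturality and the compatibility of $*$- and $!$-pushforward with composition of morphisms of stacks.
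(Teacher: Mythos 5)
Your proof follows the same route as the paper's: you factor $p$ through $H'\backslash\mathrm{pt} = BH'$, observe that the first leg $H'\backslash X \to BH'$ is representable and proper (hence its $*$- and $!$-pushforwards agree), and thereby reduce to the classifying-stack case $BH' \to \mathrm{pt}$ — which is exactly the paper's reduction to $X = \mathrm{pt}$. The paper dismisses that last step as "an easy verification," whereas you spell it out via the further factorization $BH' \to BH'_{\mathrm{red}} \to \mathrm{pt}$; the bookkeeping you indicate ($s_{\mathrm{red}} = -\dim H'_{\mathrm{red}}$, $s_U = -2\dim U$, summing to $-2\dim H' + \dim H'_{\mathrm{red}}$) is the right one, so this is a reasonable elaboration of the step the paper leaves to the reader.
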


\begin{proof}

Factor the map $p$ as
$$H'\backslash X\to H'\backslash \on{pt}\to \on{pt}.$$

The first arrow is proper, and this reduces the assertion of the lemma to the case $X=\on{pt}$.
In the latter case, this is an easy verification.

\end{proof}

\begin{rem}
In the above lemma, it is crucial that we understand $p_*$ as the \emph{renormalized} direct image 
functor of \cite[Sect. 9.3]{DrGa1} (i.e., the continuous extension of the restriction of the usual $p_*$ to
compact objects).
\end{rem}

\ssec{First applications}

In this subsection we will take $G$ to be a reductive group, $X$ its flag variety, and 
$H=N$ the unipotent radical of a Borel. 

\sssec{}

We are going to show:

\begin{thm} \label{t:serre}
The functor $\on{Ps-Id}_{N\backslash X}$ induces a self-equivalence of $\Dmod_\lambda(N\backslash X)$. Moreover, 
$\on{Ps-Id}_{N\backslash X}[2\dim(X)]$ identifies with the composite
$$\Dmod_\lambda(N\backslash X) \overset{\Upsilon^{-1}}\longrightarrow 
\Dmod_\lambda(N^-\backslash X)  \overset{(\Upsilon^-)^{-1}}\longrightarrow \Dmod_\lambda(N\backslash X).$$
\end{thm}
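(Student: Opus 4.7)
The plan is to deduce \thmref{t:serre} from \thmref{t:pseudo-id and av}, applied with $H = N$ acting on $X$. Specifically, the latter furnishes, for every $\CF \in \Dmod_\lambda(X)$, a canonical natural transformation
$$\Av^N_!(\CF) \to \on{Ps-Id}_{N\backslash X}\circ \Av^N_*(\CF)[2\dim(X)],$$
which is an isomorphism whenever $\CF$ is ULA with respect to the projection $f: X\to N\backslash X$.

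I would then specialize to $\CF = \oblv_{N^-}(\CG')$ for $\CG' \in \Dmod_\lambda(N^-\backslash X)$. By the very definition of $\Upsilon$, the right-hand side becomes $\on{Ps-Id}_{N\backslash X}(\Upsilon(\CG'))[2\dim(X)]$. For the left-hand side, \propref{p:Ups X} applied with the roles of $N$ and $N^-$ interchanged shows that the partially-defined functor $\Av^N_!\circ \oblv_{N^-}$ is in fact defined and equal to $(\Upsilon^-)^{-1}$. The natural transformation therefore becomes
$$(\Upsilon^-)^{-1}(\CG') \to \on{Ps-Id}_{N\backslash X}(\Upsilon(\CG'))[2\dim(X)].$$
Substituting $\CG' = \Upsilon^{-1}(\CG)$ and using that $\Upsilon$ is an equivalence, I obtain a natural transformation of continuous endofunctors of $\Dmod_\lambda(N\backslash X)$,
$$(\Upsilon^-)^{-1}\circ \Upsilon^{-1} \longrightarrow \on{Ps-Id}_{N\backslash X}[2\dim(X)].$$

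To promote this to an isomorphism it suffices, by continuity and compact generation, to verify the statement on objects $\CG = \Upsilon(\CG')$ with $\CG'$ compact in $\Dmod_\lambda(N^-\backslash X)$; by \thmref{t:pseudo-id and av}(b), this amounts to showing that $\oblv_{N^-}(\CG')$ is ULA with respect to $f$ for such $\CG'$. This is the main technical obstacle, and it is where the geometry of the flag variety enters: the $N$- and $N^-$-orbit stratifications of $X$ are mutually transverse, the intersection of an $N$-Bruhat cell $BvB/B$ with an $N^-$-Bruhat cell $B^-wB/B$ being a smooth locally-closed subvariety of the expected complementary dimension. A compact object of $\Dmod_\lambda(N^-\backslash X)$ is built by successive extensions of standards and costandards along the $N^-$-stratification, so its pullback is built from extensions of lisse objects on $N^-$-orbits; transversality then allows one to check finite generation over the sheaf of $f$-vertical differential operators cell-by-cell along the $N$-Bruhat stratification.

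Once the identification $\on{Ps-Id}_{N\backslash X}[2\dim(X)] \simeq (\Upsilon^-)^{-1}\circ \Upsilon^{-1}$ is established, the self-equivalence assertion in \thmref{t:serre} follows for free: both $\Upsilon$ and $\Upsilon^-$ are equivalences by \propref{p:Ups X}, so the right-hand composite is an equivalence, and hence so is $\on{Ps-Id}_{N\backslash X}$. Everything beyond the ULA verification is a formal manipulation of the adjunctions already built into \secref{ss:longest intertwiner} and \ssecref{ss:Psi}.
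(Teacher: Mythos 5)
Your proof is essentially the paper's: apply \thmref{t:pseudo-id and av} with $H=N$ to objects in the essential image of $\oblv_{N^-}$, and observe that those objects are ULA with respect to $X\to N\backslash X$ (this is the paper's \propref{p:equiv ULA}, which is deduced from the general transversality criterion \propref{p:trans}). Your cell-by-cell sketch of the ULA verification via the Bruhat stratifications is a less economical route to the same fact, but the key geometric input---transversality of the $N$- and $N^-$-actions on $X$---is identical.
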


The proof is based on the following assertion, proved below: 

\begin{prop}  \label{p:equiv ULA}
Any object in the essential image of the forgetful functor
$$\oblv_{N^-}:\Dmod_\lambda(N^-\backslash X)^c\to \Dmod_\lambda(X)$$
is ULA with respect to $X\to N\backslash X$.
\end{prop}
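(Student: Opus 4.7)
The plan is to reduce the statement to checking the ULA property on a small set of compact generators of $\Dmod_\lambda(N^-\backslash X)$, and then verify it directly using Bruhat geometry together with the pointwise transversality of the $\fn$- and $\fn^-$-actions on $X$.

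First I would observe that the class of objects in $\Dmod_\lambda(X)$ that are ULA with respect to $f\colon X\to N\backslash X$ is closed under cones, shifts, retracts, and finite direct sums, so it is enough to verify the property on a set of compact generators of $\Dmod_\lambda(N^-\backslash X)$. The $N^-$-orbits on $X$ are the opposite Bruhat cells $X_w^-=N^-wB/B$ for $w\in W$, each isomorphic to an affine space, so after a standard dévissage along this stratification, $\Dmod_\lambda(N^-\backslash X)^c$ is generated by the $*$- and $!$-pushforwards $(j_w)_*(\CM_w)$ and $(j_w)_!(\CM_w)$, where $j_w\colon X_w^-\hookrightarrow X$ and $\CM_w$ is a finite-dimensional $N^-$-equivariant twisted D-module on $X_w^-$ (equivalently, a finite-dimensional representation of the stabilizer of $wB/B$ in $N^-$).

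Next I would invoke the crucial geometric input: at every $x\in X$, since $\on{Stab}_\fg(x)$ is a Borel subalgebra (and in particular contains a Cartan subalgebra of $\fg$), one has $\fg=\fn+\fn^-+\on{Stab}_\fg(x)$, and hence
$$T_xX=\fn\cdot x+\fn^-\cdot x.$$
Restricting to $x\in X_w^-$, where $T_xX_w^-=\fn^-\cdot x$, this says that the tautological map of $\O_{X_w^-}$-modules $\fn\otimes\O_{X_w^-}\twoheadrightarrow N_{X_w^-/X}$ onto the normal bundle is surjective.

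Finally, I would argue locally. Around a point of $X_w^-$, choose coordinates $(t_i,u_j)$ with $X_w^-$ cut out by $u=0$, and pick vector fields $\xi_1,\dots,\xi_c\in\fn$ whose classes in $N_{X_w^-/X}$ form a frame $\partial_{u_1},\dots,\partial_{u_c}$. Since $\xi_i\in\fn\subset{}'\!\on{D}$, each $\xi_i$ acts as $\partial_{u_j}$ modulo corrections tangential to $X_w^-$, and tangential fields already lie in ${}'\!\on{D}$. Locally $(j_w)_*(\CM_w)$ is $\on{D}_X$-generated by a finite collection of lifts of generators of $\CM_w$; an induction on the order of normal derivatives should identify the ${}'\!\on{D}$-submodule these lifts generate with the whole of $(j_w)_*(\CM_w)$ (the case of $(j_w)_!$ is analogous). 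The main obstacle will be the bookkeeping in this final step: one must propagate the finite ${}'\!\on{D}$-generation through successive normal derivatives while accounting for the tangential remainder terms, which amounts to a PBW-type argument for the filtration of $\on{D}_X$ over ${}'\!\on{D}$ combined with the surjectivity from the transversality step.
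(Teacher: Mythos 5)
Your proof is correct, but it takes a genuinely different route from the paper's. The paper deduces \propref{p:equiv ULA} as a special case of \propref{p:trans}, which asserts that for any pair of \emph{transversal} group actions $H_1,H_2$ on a smooth $X$, any coherent object in the essential image of $\oblv_{H_1}$ is ULA with respect to $X\to H_2\backslash X$. The proof of that proposition is a short diagram chase: one uses that ULA is smooth-local on both source and target, base-changes along $H_2\times X\to X$ and then $H_1\times H_2\times X\to H_2\times X$, factors the relevant map through $X\times X\overset{p_2}\to X\to H_1\backslash X$, and invokes \lemref{l:trans} (transversality $\Leftrightarrow$ smoothness of the map $H_1\times H_2\times X\to X\times X$) to reduce to the tautology that $p_2^*$ of a coherent object is ULA with respect to $p_1$. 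There is no dévissage along Bruhat cells and no local-coordinate computation. Your argument instead reduces to the stratification generators $(j_w)_*\CM_w$, $(j_w)_!\CM_w$ and verifies finite ${}'\!\on{D}$-generation directly via a filtration by order of normal derivative; the associated graded is $\CM_w\otimes\Sym(N_{X_w^-/X})$, on which the symbols of $\fn\subset{}'\!\on{D}$ act by the surjection $\fn\otimes\O_{X_w^-}\twoheadrightarrow N_{X_w^-/X}$, yielding generation in degree $0$ --- this is exactly the ``bookkeeping'' you flagged, and it does close. The trade-offs: your approach is more explicit and makes visible exactly where the transversality is spent, but it is tied to the Bruhat stratification and requires the preliminary reduction to generators (plus the observation that ULA is thick). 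The paper's approach is shorter, avoids any mention of orbits, and is stated at the natural level of generality (arbitrary transversal pairs on any smooth $X$), which it then reuses for the symmetric-pair case in \lemref{l:parab orbits} and \propref{p:K impl}. Two small remarks on your write-up: your parenthetical identification of $\CM_w$ with ``a finite-dimensional representation of the stabilizer'' should really say that the $N^-$-equivariant category on each orbit is generated by the constant object (the stabilizer being unipotent), and it is cleaner to note that ${}'\!\on{D}$ is the enveloping algebra of a finite-rank Lie algebroid, hence left Noetherian, so that finite generation over ${}'\!\on{D}$ is indeed a thick condition.
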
 

\begin{proof}[Proof of  \thmref{t:serre}] 
 
Since, by \propref{p:Ups X}, the functors $\Upsilon$ and $\Upsilon^-$ are equivalences with
$$(\Upsilon^-)^{-1}\simeq \Av^N_!\circ \oblv_{N^-},$$
it suffices to establish an isomorphism
$$\on{Ps-Id}_{N\backslash X} \circ \Av^N_* \circ \oblv_{N^-}[2\dim(X)]\simeq  \Av^N_! \circ \oblv_{N^-}.$$

However, the latter follows from \propref{p:equiv ULA} and \thmref{t:pseudo-id and av}. 

\end{proof}

\begin{rem}
The fact that $\on{Ps-Id}_{N\backslash X}$ is
an equivalence is also a special case of \thmref{t:miraculous}.
\end{rem} 

\sssec{Proof of \thmref{t:J self-dual on X}}  \label{sss:self-duality of flags}

We start with the (iso)morphism of \thmref{t:pseudo-id and av} and compose it with $\Av^{N^-}_*\circ \oblv_{N}$. We obtain a map
$$\Av^{N^-}_*\circ \oblv_{N}\circ \Av^N_! \to \Av^{N^-}_*\circ \oblv_{N}\circ  \on{Ps-Id}_{N\backslash X}\circ \Av^N_*[2\dim(X)],$$
which is an isomorphism on objects whose cohomologies are ULA with respect to $X\to N\backslash X$.

\medskip

We claim that the RHS, i.e.,  $$\Av^{N^-}_*\circ \oblv_{N}\circ \on{Ps-Id}_{N\backslash X}\circ \Av^N_*[2\dim(X)],$$ is canonically
isomorphic to 
$$\Av^{N^-}_!\circ \oblv_{N}\circ \Av^N_*.$$

In fact, we claim that there is a canonical isomorphism
\begin{equation} \label{e:another relation}
\Av^{N^-}_*\circ \oblv_{N} \circ \on{Ps-Id}_{N\backslash X}[2\dim(X)]\simeq \Av^{N^-}_!\circ \oblv_{N}
\end{equation}
as functors $\Dmod_\lambda(N\backslash X)\to \Dmod_\lambda(N^-\backslash X)$. 

\medskip

Since the functors $\Av^N_*\circ \oblv_{N^-}=\Upsilon$ and $\Av^{N^-}_!\circ \oblv_N$ are mutually inverse, 
the latter isomorphism is equivalent to
$$\Upsilon^- \circ \on{Ps-Id}_{N\backslash X}[2\dim(X)]\simeq \Upsilon^{-1},$$
while the latter is the assertion of \thmref{t:serre}. 

\qed

\sssec{}

As another application of \thmref{t:pseudo-id and av}, we will now prove:

\begin{thm} \label{t:Whit}
let $\CF\in \Dmod(X)$ be $(N^-,\psi)$-equivariant, where $\psi:N^-\to \BG_a$ is a non-degenerate character.
Then there exists a functorial isomorphism (depending on a certain choice)
$$\Av^N_!(\CF)\simeq \Av^N_*(\CF)[2\dim(X)].$$
\end{thm}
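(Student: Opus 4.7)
The plan is to follow the pattern of the proof of \thmref{t:J self-dual on X} (see \secref{sss:self-duality of flags}), combining \thmref{t:pseudo-id and av} with the cleanness of Whittaker sheaves.

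First I would show that the underlying $D$-module $\CF$ is ULA with respect to $f:X\to N\backslash X$. Since $\psi$ is non-degenerate, $\CF$ is clean: $\CF\simeq j_!(\CF|_{\oX})\simeq j_*(\CF|_{\oX})$, where $j:\oX\hookrightarrow X$ is the inclusion of the big Bruhat cell $\oX=N^-B/B\cong N^-$, and $\CF|_{\oX}$ is a rank-one smooth $D$-module given (up to shift and the choice of a line) by the pullback of the exponential $D$-module on $\BG_a$ along $\psi:N^-\to \BG_a$. A smooth $D$-module is ULA with respect to any smooth morphism, so $\CF|_{\oX}$ is ULA over $f\circ j$; cleanness then propagates this ULA property through $j_!=j_*$ to all of $X$.

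Applying \thmref{t:pseudo-id and av} then gives
$$\Av^N_!(\CF)\simeq \on{Ps-Id}_{N\backslash X}(\Av^N_*(\CF))[2\dim(X)].$$
It remains to produce an isomorphism $\on{Ps-Id}_{N\backslash X}(\Av^N_*(\CF))\simeq \Av^N_*(\CF)$. By \thmref{t:serre}, this is equivalent to showing that the composite $(\Upsilon^-)^{-1}\circ \Upsilon^{-1}$ acts as the shift $[2\dim(X)]$ on the object $\Av^N_*(\CF)$. Non-degeneracy of $\psi$ forces the Whittaker category $\Dmod(X)^{(N^-,\psi)}$ to be equivalent to $\Vect$, generated by a single clean object $\CW_\psi$ up to shift and tensoring by a line; after choosing a trivialization of this line---which is the ``certain choice'' in the statement---one reduces to verifying the desired isomorphism for $\CF=\CW_\psi$.

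The main obstacle is this final verification. I would attempt it by explicit calculation on the big cell using the clean presentation of $\CW_\psi$, computing $\Av^N_!$ and $\Av^N_*$ via the action map $N\times X\to X$ and its restriction to $N\times \oX$, and exploiting the interaction of $j_!=j_*$ with the $*$- and $!$-pushforwards along this action map. Alternatively, one might formulate and prove a Whittaker analog of \propref{p:Ups X}, expressing $\Av^N_*(\CF)$ and $\Av^N_!(\CF)$ as values of mutually inverse equivalences between $\Dmod(X)^{(N^-,\psi)}$ and $\Dmod(X)^N$, thereby reducing the claim to a shift comparison for these equivalences.
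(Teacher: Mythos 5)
Your proposal tracks the paper's argument correctly through the first two reductions, but the two justifications you supply along the way are respectively shaky and absent, and the second is where the actual content of the theorem lies.

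For the ULA claim, the paper does not argue via cleanness. It invokes the (twisted-equivariance variant of) \propref{p:equiv ULA}, which rests on \propref{p:trans}: the $N^-$- and $N$-orbits on $X$ are transversal, so any $(N^-,\psi)$-equivariant coherent object is automatically ULA for $X\to N\backslash X$. Your cleanness argument does not directly give this. ULA with respect to $f$ is a finiteness condition over the subsheaf $'\!\on{D}$ of vertical differential operators; it is smooth-local, and smoothness of $\CF|_{\oX}$ gives it over the open cell, but ``$j_!=j_*$'' does not by itself propagate coherence over $'\!\on{D}$ across the boundary. Cleanness is a statement about $\on{D}$-module extensions, not about $'\!\on{D}$-coherence, and the Whittaker sheaf is supported on all of $X$, not merely on $\oX$. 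You should replace that step with the transversality argument.

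The substantive gap is the final verification, which you explicitly leave open. The paper's route is specific and not something your ``explicit calculation on the big cell'' or ``Whittaker analog of \propref{p:Ups X}'' sketches would readily reproduce. The key observation is that for $\CF$ Whittaker, $\Av^N_*(\CF)$ is canonically of the form $\CM\underset{\End(\Xi)}\otimes \Xi$ where $\Xi$ is the big projective in $(\Dmod(X)^N)^\heartsuit$ (because such objects are right-orthogonal to the other indecomposable projectives). Then $\Xi':=\Av^N_!\circ w_0(\Xi)[-\dim(X)]$ is non-canonically isomorphic to $\Xi$ compatibly with the $\End(\Xi)$-action; a choice of such an isomorphism -- this is the ``certain choice'' in the statement -- produces the required identification and hence that $(\Upsilon^-)^{-1}\circ\Upsilon^{-1}$ acts by $[2\dim(X)]$ on $\Av^N_*(\CF)$. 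Without this ingredient (the big projective and its self-duality under $\Av^N_!\circ w_0$), the final step of your proof is missing, not merely unoptimized.
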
 

\begin{proof}

By a variant of \propref{p:equiv ULA} (where we replace equivariance by twisted equivariance), we have a canonical isomorphism
$$\on{Ps-Id}_{N\backslash X}\circ \Av^N_*(\CF)[2\dim(X)]\simeq \Av^N_!(\CF).$$

By \thmref{t:serre}, the left-hand side can be further rewritten as
$$\Av^N_!\circ w_0\cdot \Av^N_!\circ w_0\cdot \Av^N_*(\CF),$$
where $w_0\cdot -$ is the functor of translation by (a representative of) the longest element of the 
Weyl group.

\medskip

We claim that there is a functorial isomorphism
$$\Av^N_!\circ w_0\cdot \Av^N_*(\CF)\simeq \Av^N_*(\CF)[\dim(X)],$$
depending on a certain choice. 

\medskip 

Namely, it is known that objects of the form $\Av^N_*(\CF)$ for $\CF\in \Dmod(X)^{N^-,\psi}$
are \emph{canonically} of the form 
$$\CM\underset{\End(\Xi)}\otimes \Xi,\quad \CM\in \End(\Xi)^{\on{op}}\mod,$$
for a choice of the ``big projective" $\Xi\in (\Dmod(X)^N)^\heartsuit$. (Indeed, such objects are right-orthogonal to the other indecomposable projectives in $(\Dmod(X)^N)^\heartsuit$). 

\medskip

Denote 
$$\Xi':=\Av^N_!\circ w_0(\Xi)[-\dim(X)].$$

We obtain 
$$\Av^N_!\circ w_0\cdot \Av^N_*(\CF)\simeq \Av^N_!\circ w_0(\CM\underset{\End(\Xi)}\otimes \Xi)
\simeq \CM\underset{\End(\Xi)}\otimes \Xi'[\dim(X)].$$

\medskip

Now, it is also known that 
$$\Xi':=\Av^N_!\circ w_0(\Xi)[-\dim(X)]$$
is \emph{non-canonically} isomorphic again to $\Xi$, in a way compatible with the action of $\End(\Xi)$.

\medskip

A choice of such an isomorphism gives rise to an identification
$$\CM\underset{\End(\Xi)}\otimes \Xi'[\dim(X)]\simeq \CM\underset{\End(\Xi)}\otimes \Xi[\dim(X)]\simeq
\Av^N_*(\CF),$$ 
as desired. 

\end{proof}

\ssec{Transversality and the proof of \propref{p:equiv ULA}}

\sssec{}

Let $H_1$ and $H_2$ be two groups acting on a smooth variety $X$. We shall say that these two actions are \emph{transversal}
if for every point $x\in X$, the orbits $$H_1\cdot x\subset X\supset H_2\cdot x$$ are transversal at $x$.

\medskip

\begin{lem}  \label{l:trans}
The actions of $H_1$ and $H_2$ on $X$ are transversal if and only if the map
\begin{equation} \label{e:action map}
H_1\times H_2\times X\to X\times X, \quad (h_1,h_2,x)\mapsto (h_1\cdot x,h_2\cdot x)
\end{equation} 
is smooth.
\end{lem}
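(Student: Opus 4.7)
Both the source $H_1\times H_2\times X$ and the target $X\times X$ are smooth varieties (we work in characteristic zero, so algebraic groups are smooth), so the map in \eqref{e:action map} is smooth at a given point if and only if its differential there is surjective. The plan is therefore to translate the transversality condition into surjectivity of the differential of the action map at each point of the source.

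First, I would exploit $H_1\times H_2$-equivariance to reduce to checking the differential at points of the form $(e,e,x)$. Namely, let $H_1\times H_2$ act on the source by $(g_1,g_2)\cdot(h_1,h_2,x)=(g_1h_1,\,g_2h_2,\,x)$ and on the target by $(g_1,g_2)\cdot(y_1,y_2)=(g_1y_1,\,g_2y_2)$; one checks that \eqref{e:action map} is equivariant for these actions. Since both actions are by isomorphisms of varieties and $(h_1,h_2,x)=(h_1,h_2)\cdot(e,e,x)$, smoothness at $(h_1,h_2,x)$ is equivalent to smoothness at $(e,e,x)$.

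Next, I would compute the differential at $(e,e,x)$. Writing $a_i:\Lie(H_i)\to T_xX$ for the infinitesimal action map, a direct calculation identifies the differential with
\[
T_eH_1\oplus T_eH_2\oplus T_xX \longrightarrow T_xX\oplus T_xX,\qquad (\xi_1,\xi_2,v)\longmapsto (a_1(\xi_1)+v,\,a_2(\xi_2)+v).
\]
An elementary linear algebra manipulation (given $(w_1,w_2)$, substitute $v=w_2-a_2(\xi_2)$ to reduce to solving $w_1-w_2=a_1(\xi_1)-a_2(\xi_2)$) shows that this map is surjective if and only if $\im(a_1)+\im(a_2)=T_xX$. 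But $\im(a_i)$ is exactly the tangent space $T_x(H_i\cdot x)$ to the $H_i$-orbit through $x$, so surjectivity at $(e,e,x)$ is equivalent to transversality of the two orbits at $x$.

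There is no serious obstacle: the only substantive content is the equivalence between surjectivity of the differential and the geometric condition $T_x(H_1\cdot x)+T_x(H_2\cdot x)=T_xX$, and the equivariance reduction that allows us to work at the identity. Combining the two reductions yields the equivalence asserted in the lemma.
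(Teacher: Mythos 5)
The paper states this lemma without supplying a proof, so there is no argument to compare against; your proof is correct and is the natural one. Two small points worth making explicit: (i) the criterion ``smooth $\iff$ differential surjective at every ($k$-)point'' is being applied to a morphism of smooth varieties over an algebraically closed field (and it suffices to check closed points since the non-smooth locus is closed), and (ii) the identification $\im(a_i)=T_x(H_i\cdot x)$ uses that in characteristic $0$ the orbit map $H_i\to H_i\cdot x$ is smooth (separability of orbit maps), which is also what makes the transversality condition well-posed in the first place. With those remarks your equivariance reduction to the point $(e,e,x)$, the computation of the differential as $(\xi_1,\xi_2,v)\mapsto(a_1(\xi_1)+v,\,a_2(\xi_2)+v)$, and the linear-algebra step reducing surjectivity to $\im(a_1)+\im(a_2)=T_xX$ are all correct and complete.
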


\sssec{Example} It is easy to see that for $X$ being the flag variety of $G$ and $H_1=N$,
and $H_2=N^-$, then the corresponding actions are transversal.

\sssec{}

We have the following generalization of \propref{p:equiv ULA}:

\begin{prop}  \label{p:trans}
Let the actions of $H_1$ and $H_2$ on $X$ be transversal. Then for any (twisted) D-module $\CF$ on $H_1\backslash X$, if the (twisted) D-module $\oblv_{H_1}(\CF)$ on $X$ is coherent, it is ULA with respect to the projection $X\to H_2\backslash X$.
\end{prop}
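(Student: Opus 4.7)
The goal is to show that $\CM := \oblv_{H_1}(\CF)$, which by assumption is $\on{D}_X$-coherent and carries a strong $H_1$-equivariant structure, is finitely generated as a module over the sub-D-algebra $'\!\on{D} \subset \on{D}_X$ of $f$-vertical differential operators for $f: X\to H_2\backslash X$; by the discussion of \secref{sss:ULA} this is exactly the ULA condition. The starting input is \lemref{l:trans}: transversality is equivalent to smoothness of the action map $\alpha: H_1\times H_2\times X \to X\times X$, which upon differentiating at $(e,e,x)$ yields surjectivity of
$$\pi_1\oplus \pi_2 : (\fh_1\oplus \fh_2)\otimes\CO_X \to T_X.$$
Writing $V_i := \pi_i(\fh_i\otimes\CO_X) \subset T_X$, this reads $T_X = V_1 + V_2$, and hence $\on{D}_X$ is generated as an $\CO_X$-algebra by $'\!\on{D}$ (which by definition is generated over $\CO_X$ by $V_2$) together with the finite-dimensional subspace $\pi_1(\fh_1) \subset V_1$.

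The next step is to produce a coherent $\CO_X$-submodule $\CM_0 \subset \CM$ that generates $\CM$ as a $\on{D}_X$-module and is stable under $\pi_1(\fh_1)$. Strong $H_1$-equivariance of $\CM$ is precisely descent data along $X \to H_1\backslash X$, i.e.\ $\CF$ itself; choose a coherent $\CO$-subsheaf of the descent that generates it over the D-algebra on $H_1\backslash X$, and pull it back to $X$. Since strong equivariance identifies the $\fh_1$-action coming from the equivariant structure with the vector field action of $\pi_1(\fh_1)$ from the D-module structure, this $\CM_0$ is automatically $\pi_1(\fh_1)$-stable.

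With these ingredients we prove $\CM = {}'\!\on{D}\cdot\CM_0$ by a double induction. The outer induction runs over the PBW filtration $\on{D}_X = \bigcup_n \on{D}_X^{\leq n}$: the step reduces, via $T_X = V_1 + V_2$ and $V_2\subset{}'\!\on{D}$, to the inclusion $V_1\cdot{}'\!\on{D}\cdot\CM_0 \subset{}'\!\on{D}\cdot\CM_0$. This inner inclusion is in turn proved by induction on the order in $'\!\on{D}$: the base case is $\pi_1(\fh_1)$-stability of $\CM_0$, while the inductive step uses
$$V_1\cdot V_2 \ \subset\ V_2\cdot V_1 + [V_1,V_2] \ \subset\ V_2\cdot V_1 + V_1 + V_2$$
(the last inclusion being $T_X = V_1+V_2$ once more) to shuffle each factor of $V_1$ past a factor of $V_2$, at the cost of lower-order corrections already handled by the inductive hypothesis. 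Since $\CM_0$ is $\CO_X$-coherent and hence $'\!\on{D}$-finitely generated, the identity $\CM = {}'\!\on{D}\cdot\CM_0$ gives the required ULA finite generation.

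The principal obstacle lies in the last step: because $V_1$ and $V_2$ do not commute, straightening an arbitrary $\on{D}_X$-word into the form ${}'\!\on{D}\cdot\pi_1(\fh_1)$-polynomial requires iterated commutation, and one must verify that the bracket corrections stay within the induction scheme. Transversality is indispensable both at the outset (furnishing $V_1+V_2 = T_X$) and inside the commutator step (ensuring $[V_1,V_2]\subset V_1+V_2$); without the latter the inner induction would not close. A secondary, more conceptual issue is bypassed in the middle step: one cannot directly average a coherent generator over a general, possibly non-reductive $H_1$, but the strong equivariant structure gives descent to the stack $H_1\backslash X$, which supplies an $H_1$-stable coherent generator with no averaging required.
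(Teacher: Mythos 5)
Your proof is correct, but it follows a genuinely different route from the paper's. The paper's argument is geometric and short: using smooth-locality of the ULA property on both base and source, it pulls $\CF$ back to $H_1\times H_2\times X$ and factors the relevant projection through the action map $H_1\times H_2\times X\to X\times X$, which transversality (\lemref{l:trans}) makes smooth; this reduces everything to the trivial fact that $p_2^*$ of any coherent D-module on $X$ is ULA over $p_1$ on $X\times X$. Your argument is instead local and computational: it identifies ULA with finite generation over ${}'\!\on{D}$, uses the tangent-space surjectivity $V_1+V_2=T_X$ to exhibit $\on{D}_X$ as generated over ${}'\!\on{D}$ by the finite-dimensional space $\pi_1(\fh_1)$, produces a $\pi_1(\fh_1)$-stable coherent $\CO_X$-generator $\CM_0$ by descent to $H_1\backslash X$, and then shows ${}'\!\on{D}\cdot\CM_0$ is already a $\on{D}_X$-submodule by commutator straightening. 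Both are valid; the paper's version is shorter, choice-free, and adapts with no modification to stacky sources and targets and to twistings, while yours is more explicit and exhibits the mechanism directly (the Lie algebroid of the $H_1$-action supplies the missing directions transverse to $f$). Two small remarks: the descent step for producing the stable generator is genuinely needed, as you note, and it uses that $\CF$ itself is D-coherent on $H_1\backslash X$, which does hold because coherence is smooth-local; and your claim that transversality enters a second time to close the inner induction overstates things --- since $[V_1,V_2]\subset T_X$ automatically, the inclusion $[V_1,V_2]\subset V_1+V_2$ is just $T_X=V_1+V_2$ applied again, not an independent hypothesis.
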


\begin{proof}

Consider the Cartesian diagram
$$
\CD
H_1\times H_2\times X @>>>  H_1\times X  @>{\on{act}_1}>>  X \\
@VVV   @V{\on{pr}_1}VV  @V{f_1}VV  \\
H_2\times X   @>{\on{pr}_2}>>  X   @>{f_1}>>  H_1\backslash X \\
@V{\on{act}_2}VV  @VV{f_2}V  \\
X   @>{f_2}>>  H_2\backslash X.
\endCD
$$

\medskip

The property of being ULA is smooth-local with respect to the base, so it is enough to show that the pullback
of $\oblv_{H_1}(\CF)$ to $H_2\times X$ is ULA with respect to the map $\on{act}_2$.

\medskip

The ULA property is also smooth-local with respect to the source. Hence, it suffices to show that the further pullback
of $\CF$ to $H_1\times H_2\times X$ is ULA with respect to the composite left vertical arrow. 

\medskip

However, the latter arrow factors as 
$$H_1\times H_2\times X \to X\times X \overset{p_2}\longrightarrow X\to H_1\backslash X.$$

Since the map $H_1\times H_2\times X \to X\times X$ is smooth, it suffices to show that the pullback of $\CF':=\oblv_{H_1}(\CF)$ 
along $X\times X \overset{p_2}\longrightarrow X$ is ULA with respect to $p_1$. However, this is true for any coherent object $\CF'$. 

\end{proof} 

\section{The case of a symmetric pair}  \label{s:gK}

\ssec{Adjusting the previous framework}

\sssec{}

In this section we will take $G$ equipped with an involution $\theta$; set $K:=G^\theta$.
Let $P$ be a minimal parabolic compatible with $\theta$ (i.e., minimal among parabolics for
which $\theta(P)$ is opposite to $P$); denote $P^-:=\theta(P)$.

\medskip

We change the notations, and in this section denote by $N$ (resp., $N^-$) the unipotent radical of $P$ (resp., $P^-$) and
$$M_K:=P\cap P^- \cap K.$$

\medskip

We have the following basic assertion:

\begin{lem}  \label{l:parab orbits}\hfill
	
\smallskip
	
\noindent{\em(a)} The groups $K$, $M_K \cdot N$ and $M_K \cdot N^-$ act on $X$ with finitely many orbits.
	
\smallskip
	
\noindent{\em(b)}	The actions of $M_K \cdot N$ and $K$ on $X$, as well as the actions of 
$M_K \cdot N$ and $M_K \cdot N^-$ on $X$, are transversal.

\end{lem}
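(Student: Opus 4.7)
The plan is to combine the finiteness of orbits for spherical subgroups (Matsuki's theorem) with the infinitesimal transversality criterion of \lemref{l:trans}, reducing both parts to structural facts about the decomposition $\fg = \fm_K \oplus \fa \oplus \fn \oplus \fn^-$ attached to a minimal $\theta$-split parabolic (with $\fa := \Lie(A)$ for $A$ a maximal $\theta$-split torus such that $M = Z_G(A)$).

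For part (a), finiteness of $K$-orbits on $X$ is the classical theorem of Matsuki, equivalent to $K$ being a spherical subgroup of $G$. For $M_K \cdot N$-orbits, I would stratify $X$ by the finite $P$-Bruhat decomposition $X = \bigsqcup_{w \in W_M \backslash W} P \cdot wB_0/B_0$. Each stratum is isomorphic to $P/(P \cap wB_0w^{-1})$, and its $(M_K \cdot N)$-orbits correspond, after quotienting out $N$ on the left, to $M_K$-orbits on a flag variety $M/B_M(w)$ of the Levi $M$. Since $\theta(M) = M$, the pair $(M, M_K)$ is itself a symmetric pair, so $M_K$ is spherical in $M$ (Matsuki again) and acts on $M/B_M(w)$ with finitely many orbits. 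The case of $M_K \cdot N^-$ is identical with $P^-$ in place of $P$.

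For part (b), \lemref{l:trans} reduces transversality at $x \in X$ to the equality $\Lie(H_1) + \Lie(H_2) + \fb_x = \fg$, where $\fb_x$ is the Borel subalgebra stabilizing $x$. A short calculation using the description $\fk = \fm_K \oplus \{X + \theta(X) : X \in \fn\}$ (valid because $\theta$ swaps $\fn$ and $\fn^-$) yields
$$
\fk + (\fm_K + \fn) \;=\; (\fm_K + \fn^-) + (\fm_K + \fn) \;=\; \fm_K \oplus \fn \oplus \fn^-,
$$
so both transversality statements collapse to the single claim that every Borel subalgebra $\fb_x \subset \fg$ surjects onto $\fg/(\fm_K \oplus \fn \oplus \fn^-) \cong \fa$. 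The identification with $\fa$ uses $\fm = \fm_K \oplus \fa$, which holds because $P$ is \emph{minimal} $\theta$-split: $A$ is central in $M$, and the quotient $M/A$ carries no further $\theta$-split tori.

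The main obstacle is verifying this surjectivity for every Borel, not only the generic ones. The Iwasawa-type decomposition $\fg = \fk \oplus \fa \oplus \fn$, which is an immediate consequence of the structural facts above, already yields surjectivity on a dense open locus of $x \in X$. To extend this to every $x$, one can argue directly: although the Cartan inside $\fb_x$ may project trivially to $\fa$, the root vectors of $\fb_x$ relative to such a Cartan compensate (as is visible in the $\fsl_2$ example, where $\fb_x = k(e-f) \oplus k(h + i(e+f))$ projects onto $kh$ via its non-Cartan direction). More conceptually, this surjectivity for $K$ versus $M_K \cdot N$ is precisely Matsuki's transversality theorem, which then also settles the $M_K \cdot N$ versus $M_K \cdot N^-$ case via the reduction above.
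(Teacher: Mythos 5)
The paper does not actually prove \lemref{l:parab orbits}; it is stated as a known fact (the non-symmetric prototype, transversality of $N$ and $N^-$, is dismissed earlier with ``it is easy to see,'' and the paper points to \cite[Theorem 5.2]{CY} for closely related statements). So the review below assesses your argument on its own terms.

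Part (a) is correct. The stratification by $P$-Bruhat cells, the observation that $N\triangleleft P$ with $N\subset M_K N$ collapses double cosets $(M_K N)\backslash P/(P\cap wB_0w^{-1})$ to $M_K\backslash M/B_M(w)$, and the appeal to sphericity of $M_K$ in the symmetric pair $(M,M_K)$ all go through.

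In part (b), your reduction of \emph{both} transversality claims to the single Lie-algebraic condition
$$\fm_K+\fn+\fn^-+\fb_x=\fg\qquad\text{for all Borels }\fb_x\subset\fg$$
is correct and is the right move. But the argument is not closed, for two reasons. First, the identification $\fm^{-\theta}=\fa$ (your ``$\fm=\fm_K\oplus\fa$'') is asserted on the grounds that $M/A$ has no more $\theta$-split tori. That only rules out extra \emph{semisimple} $\theta$-anti-invariants in $\fm$; a priori $\fm^{-\theta}$ could contain nonzero nilpotents, and this possibility is exactly what has to be excluded. Second, the treatment of the surjectivity for non-generic $x$ is a heuristic: the $\fsl_2$ computation illustrates a phenomenon but does not establish the general case, and invoking ``Matsuki's transversality theorem'' is close to circular, since at this point the surjectivity you want \emph{is} the transversality statement; there is no theorem standardly cited under that name in the symmetric-variety literature that one can quote here.

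Both gaps close simultaneously with one clean observation. The Killing form gives $(\fm_K+\fn+\fn^-)^\perp=\fm^{-\theta}$ and $\fb_x^\perp=\fn_{\fb_x}$, so the required surjectivity for all $x$ is equivalent to: $\fm^{-\theta}\cap\fn_{\fb_x}=0$ for every Borel, i.e., $\fm^{-\theta}$ contains no nonzero nilpotent element of $\fg$. To see that, suppose $Y\in\fm^{-\theta}$ is a nonzero nilpotent. By the standard $\theta$-adapted Jacobson--Morozov argument (Kostant--Rallis), $Y$ extends to an $\fsl_2$-triple $(Y,H,Z)$ with $H\in\fm^\theta=\fm_K$ and $Z\in\fm^{-\theta}$. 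Then $Y+Z$ is a nonzero semisimple element of $\fm^{-\theta}$ which commutes with $\fa$ (being in $\fm=Z_\fg(\fa)$) but does not lie in $\fa$ (since $[Y+Z,H]\neq 0$ while $\fa$ is central in $\fm$); thus $\fa\oplus k(Y+Z)$ is a larger $\theta$-split toral subalgebra, contradicting maximality of $\fa$. The same reasoning, applied to the Jordan decomposition of a general element, shows $\fm^{-\theta}=\fa$, so the target of your projection is indeed $\fa$, and the transversality is immediate because $\fa$ is toral while every $\fn_{\fb_x}$ is nilpotent. This completes (b) without any appeal to a black box.
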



\sssec{}  \label{sss:modify}

The discussion in \secref{s:J} needs to be modified as follows: instead of the functor
$$\oblv_N:\CC^N\to \CC$$
and its right and (partially defined) left adjoints $\Av^N_*$ and $\Av^N_!$, we consider the analogous functor 
$$ \oblv_{M_K \cdot N / M_K} : \CC^{M_K \cdot N} \to \CC^{M_K}$$ and its right and (partially defined) left adjoints 
$\Av_*^{M_K \cdot N / M_K }$ and $\Av_!^{M_K \cdot N / M_K}$. 
However, by abuse of notation, we will still write $\oblv_N$ instead of $\oblv_{M_K \cdot N / M_K}$, etc.

\medskip

In \propref{p:main} we take $\Upsilon$ to be the functor 
$$\CC^{M_K\cdot N^-}\to \CC^{M_K\cdot N}$$ given by $\Av_*^{N} \circ \oblv_{N^-}$.

\medskip

The key fact is that this functor is an equivalence for $\CC=\Dmod_\lambda(X)$ (with the same proof), and
hence also for $\fg\mod_\chi$. It's inverse is again given by $ \Upsilon^{-1} = \Av_!^{N^-} \circ \oblv_N$.

\medskip

The assertions of Theorems \ref{t:J self-dual on g} and \ref{t:J self-dual on X} should be modified as follows.
Let $$(\fg\mod_\chi^{M_K})^{\fn\on{-f.g.}}\subset \fg\mod_\chi^{M_K}$$ be the full subcategory equal to the
preimage of $\fg\mod_\chi^{\fn\on{-f.g.}}\subset \fg\mod_\chi$  under the forgetful functor
$\oblv_{M_K}:\fg\mod_\chi^{M_K}\to \fg\mod_\chi$.

\begin{thm} \label{t:J self-dual on g K} \hfill

\smallskip

\noindent{\em(a)} 
There is a canonically defined natural transformation of functors
$$\Av^{N^-}_*\circ \oblv_{N}\circ  \Av^N_!\to 
\Av^{N^-}_!\circ\oblv_{N}\circ  \Av^N_*\simeq J,$$
considered as functors $\fg\mod_\chi^{M_K}\to \fg\mod_\chi^{M_K\cdot N^-}$. 

\smallskip

\noindent{\em(b)} The above natural transformation is an isomorphism when evaluated on objects from 
$\Ind^\wedge((\fg\mod_\chi^{M_K})^{\fn\on{-f.g.}})$.



\end{thm}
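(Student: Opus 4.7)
The plan is to mirror the proof of \thmref{t:J self-dual on g}, which proceeded in two steps: first establishing the analogous statement for twisted D-modules on the flag variety, then bootstrapping via Beilinson-Bernstein localization. Accordingly, I would first formulate and prove the D-module counterpart for $\CC = \Dmod_\lambda(X)$: namely, there is a canonical natural transformation
\[
\Av^{N^-}_*\circ \oblv_{N}\circ  \Av^N_!\to \Av^{N^-}_!\circ\oblv_{N}\circ  \Av^N_*,
\]
viewed as functors $\Dmod_\lambda(X)^{M_K}\to \Dmod_\lambda(X)^{M_K\cdot N^-}$, and this map is an isomorphism on objects in $\Ind^\wedge(\Dmod_\lambda(M_K\backslash X)^{\on{ULA}})$, where the ULA condition is taken with respect to $M_K\backslash X\to M_K\cdot N\backslash X$.

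To construct this D-module transformation, I would apply \thmref{t:pseudo-id and av rel} with $H'=M_K\subset M_K\cdot N=H$ acting on the proper scheme $X$. This produces (up to the appropriate shift) a natural transformation $\Av^N_!\to \on{Ps-Id}_{M_K\cdot N\backslash X}\circ \Av^N_*$ that is an isomorphism on $\Ind^\wedge(\Dmod_\lambda(M_K\backslash X)^{\on{ULA}})$. Composing this with $\Av^{N^-}_*\circ \oblv_N$ from the left and invoking the symmetric-pair analog of the key identity \eqref{e:another relation},
\[
\Av^{N^-}_*\circ \oblv_{N}\circ \on{Ps-Id}_{M_K\cdot N\backslash X}\simeq \Av^{N^-}_!\circ \oblv_{N},
\]
yields the desired natural transformation. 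This latter identity follows from the symmetric-pair version of \thmref{t:serre}, identifying $\on{Ps-Id}_{M_K\cdot N\backslash X}$ with $(\Upsilon^-)^{-1}\circ \Upsilon^{-1}$ up to a shift; its proof goes through verbatim, using that $\Upsilon$ is an equivalence in this setting (as noted in \secref{sss:modify}) and that the analog of \propref{p:equiv ULA} holds by \propref{p:trans} applied to the transversal actions of $M_K\cdot N$ and $M_K\cdot N^-$ on $X$ from \lemref{l:parab orbits}(b).

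Second, I would deduce the $\fg$-module statement via the Localization adjunction $\on{Loc}:\fg\mod_\chi\rightleftarrows \Dmod_\lambda(X):\Gamma$, which respects $G$-actions and in particular $M_K$-equivariance. \propref{p:good Av!} allows me to transport the $\Av^{N^-}_!$-defined locus and its values between the two sides. To convert between the relevant subcategories of coefficients, I need the $M_K$-equivariant enhancement of \propref{p:Loc and ULA}: that $\on{Loc}$ sends $\Ind^\wedge((\fg\mod_\chi^{M_K})^{\fn\on{-f.g.}})$ into $\Ind^\wedge(\Dmod_\lambda(M_K\backslash X)^{\on{ULA}})$ and $\Gamma$ sends the latter back into the former. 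The proofs of \secref{ss:Loc and ULA} carry over: $M_K$-equivariance is preserved by $\on{Loc}$ and $\Gamma$, the ULA/finite-generation equivalence is a statement about the vertical universal enveloping D-algebra $'\!\on{D}$ for $f:M_K\backslash X\to M_K\cdot N\backslash X$ and is independent of the $M_K$-equivariant structure, and the translation-functor reduction to regular $\lambda$ (via \corref{c:bad trans}) is compatible with $M_K$-equivariance since the bimodules $(U(\fg)_\chi\otimes V)$ are naturally $M_K$-equivariant.

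The main obstacle is Step~1, specifically verifying the identity $\on{Ps-Id}_{M_K\cdot N\backslash X}\simeq (\Upsilon^-)^{-1}\circ \Upsilon^{-1}$ (up to shift). The argument of \thmref{t:serre} requires knowing that objects obtained by forgetting $M_K\cdot N^-$-equivariance to $\Dmod_\lambda(M_K\backslash X)$ are ULA with respect to $M_K\backslash X\to M_K\cdot N\backslash X$; this reduces through the $M_K$-quotient to the statement on $X$ (the ULA property is smooth-local on the target), and is then precisely the content of \propref{p:trans} applied to the transversal $M_K\cdot N$- and $M_K\cdot N^-$-actions. Once this is in place, both steps of the proof are essentially formal, and in particular part~(a) of the theorem (the existence of the natural transformation without assumption on the object) follows from the existence part of \thmref{t:pseudo-id and av rel}(a), while part~(b) uses its isomorphism assertion \thmref{t:pseudo-id and av rel}(b).
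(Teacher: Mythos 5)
Your proposal matches the paper's proof essentially step for step: the paper likewise proves the D-module analog (\thmref{t:J self-dual on X K}) by invoking \thmref{t:pseudo-id and av rel} with $H=M_K\cdot N$, $H'=M_K$ together with the symmetric-pair version of \thmref{t:serre} (\thmref{t:serre K}), and then deduces the $\fg$-module statement via Beilinson--Bernstein localization, \propref{p:good Av!}, and the $M_K$-equivariant enhancement of \propref{p:Loc and ULA} (\propref{p:Loc and ULA gK}). The only cosmetic difference is that for \propref{p:Loc and ULA gK} the paper does not re-run the proof of \propref{p:Loc and ULA} in the $M_K$-equivariant setting but reduces directly to it, using that ULA is smooth-local on the source: it checks the condition for $\oblv_{M_K}\circ\Loc(\CF)\simeq \Loc\circ\oblv_{M_K}(\CF)$ on $X$ with respect to $X\to N\backslash X$, which is what you essentially observe when you note that ULA for $'\!\on{D}$-modules is insensitive to the $M_K$-equivariant structure.
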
 

\begin{thm}  \label{t:J self-dual on X K} \hfill

\smallskip

\noindent{\em(a)} 
There is a canonically defined natural transformation of functors
$$\Av^{N^-}_*\circ \oblv_{N}\circ  \Av^N_!\to 
\Av^{N^-}_!\circ\oblv_{N}\circ  \Av^N_*\simeq J,$$
considered as functors 
$$\Dmod_\lambda(M_K\backslash X)\to \Dmod_\lambda(M_K\cdot N^-\backslash X).$$

\smallskip

\noindent{\em(b)} The above natural transformation is an isomorphism on 
$\Ind^\wedge(\Dmod_\lambda(M_K\backslash X)^{\on{ULA}})$, where the ULA condition 
is taken with respect to the projection $M_K \backslash X\to M_K N\backslash X$. 

\end{thm}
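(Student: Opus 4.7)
The plan is to mirror the proof of \thmref{t:J self-dual on X} given in \secref{sss:self-duality of flags}, with \thmref{t:pseudo-id and av} replaced by its relative variant \thmref{t:pseudo-id and av rel}, and \thmref{t:serre} replaced by a corresponding self-equivalence statement for $\on{Ps-Id}_{M_K \cdot N \backslash X}$. Throughout, I follow the notational conventions of \secref{sss:modify}: $\oblv_N$ stands for $\oblv_{M_K \cdot N / M_K}$ and similarly for the averaging functors, and both long intertwining functors $\Upsilon, \Upsilon^-$ between $\Dmod_\lambda(M_K \cdot N \backslash X)$ and $\Dmod_\lambda(M_K \cdot N^- \backslash X)$ are equivalences, with $\Upsilon^{-1} \simeq \Av^{N^-}_! \circ \oblv_N$ and similarly for $\Upsilon^-$.

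The first step is to apply \thmref{t:pseudo-id and av rel} with $H = M_K \cdot N$ and $H' = M_K$ to the proper scheme $X$, yielding a canonical natural transformation
$$\Av^N_! \longrightarrow \on{Ps-Id}_{M_K \cdot N \backslash X} \circ \Av^N_*[2\dim(X) - \dim(M_K)],$$
which is an isomorphism on $\Ind^\wedge(\Dmod_\lambda(M_K \backslash X)^{\on{ULA}})$. Composing on the left with $\Av^{N^-}_* \circ \oblv_N$ produces the natural transformation of part (a), provided the resulting right-hand side can be identified with $\Av^{N^-}_! \circ \oblv_N \circ \Av^N_*$; part (b) then becomes immediate from the isomorphism clause of \thmref{t:pseudo-id and av rel}.

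The required identification, which is the symmetric-pair analogue of \eqref{e:another relation}, reads
$$\Av^{N^-}_* \circ \oblv_N \circ \on{Ps-Id}_{M_K \cdot N \backslash X}[2\dim(X) - \dim(M_K)] \simeq \Av^{N^-}_! \circ \oblv_N$$
as functors $\Dmod_\lambda(M_K \cdot N \backslash X) \to \Dmod_\lambda(M_K \cdot N^- \backslash X)$. Since $\Upsilon$ is an equivalence, this amounts to the assertion that $\on{Ps-Id}_{M_K \cdot N \backslash X}$, up to the same shift, realizes the composition $\Upsilon^{-1} \circ (\Upsilon^-)^{-1}$, which is the symmetric-pair analogue of \thmref{t:serre}. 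Following the proof of \thmref{t:serre}, this reduces to showing that objects in the essential image of $\oblv_{N^-} \colon \Dmod_\lambda(M_K \cdot N^- \backslash X)^c \to \Dmod_\lambda(M_K \backslash X)$ are ULA with respect to the projection $M_K \backslash X \to M_K \cdot N \backslash X$, since then \thmref{t:pseudo-id and av rel} applies to such objects.

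The main obstacle, and the last technical point, is the symmetric-pair analogue of \propref{p:equiv ULA}. I would combine the transversality of the $M_K \cdot N$ and $M_K \cdot N^-$ actions on $X$ (\lemref{l:parab orbits}(b)) with \propref{p:trans} to conclude that any coherent D-module on $M_K \cdot N^- \backslash X$, when pulled back further to $X$, is ULA with respect to $X \to M_K \cdot N \backslash X$; since the ULA condition is smooth-local on the source and $X \to M_K \backslash X$ is a smooth $M_K$-torsor, this descends to the desired ULA property at the level of $M_K \backslash X$. With this ingredient in place, the remaining steps assemble formally into a proof of the theorem.
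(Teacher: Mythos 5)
Your proof is correct and takes essentially the same approach as the paper: the paper's own proof of Theorem~\ref{t:J self-dual on X K} is precisely to transpose the argument for Theorem~\ref{t:J self-dual on X} by substituting Theorem~\ref{t:pseudo-id and av rel} (with $H = M_K\cdot N$, $H' = M_K$) and the analogue~\ref{t:serre K} of Theorem~\ref{t:serre}, with the latter resting on transversality of the $M_K\cdot N$ and $M_K\cdot N^-$ actions via Lemma~\ref{l:parab orbits}(b) and Proposition~\ref{p:trans}, exactly as you lay out. (One cosmetic slip: the composite realized by $\on{Ps-Id}_{M_K\cdot N\backslash X}[2\dim X - \dim M_K]$ should be $(\Upsilon^-)^{-1}\circ\Upsilon^{-1}$, not $\Upsilon^{-1}\circ(\Upsilon^-)^{-1}$, since it is an endofunctor of $\Dmod_\lambda(M_K\cdot N\backslash X)$; this does not affect the argument.)
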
 

\sssec{}

\thmref{t:J self-dual on X K} is proved in the same manner as \thmref{t:J self-dual on X}, using \propref{t:pseudo-id and av rel} 
(with $H = M_K \cdot N$ and $H' = M_K$) and the following analog of \thmref{t:serre}:

\begin{thm} \label{t:serre K}
The functor $\on{Ps-Id}_{M_K \cdot N\backslash X}$ is a self-equivalence of $\Dmod_\lambda(M_K \cdot N\backslash X)$. 
Moreover, $\on{Ps-Id}_{M_K \cdot N\backslash X}[2 \dim (X) - \dim (M_K)]$ identifies with the composite
$$\Dmod_\lambda(M_K \cdot N\backslash X) \overset{\Upsilon^{-1}}\longrightarrow 
\Dmod_\lambda(M_K \cdot N^-\backslash X) \overset{(\Upsilon^-)^{-1}}\longrightarrow 
\Dmod_\lambda(M_K \cdot N\backslash X).$$
\end{thm}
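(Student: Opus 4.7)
The approach is to mimic the proof of \thmref{t:serre} with the role of $N$ (resp.\ $N^-$) played by $M_K\cdot N$ (resp.\ $M_K\cdot N^-$), replacing \thmref{t:pseudo-id and av} by its relative variant \thmref{t:pseudo-id and av rel} applied to the pair $H=M_K\cdot N$, $H'=M_K$. Since $M_K$ is the $\theta$-fixed subgroup of the reductive Levi $M=P\cap P^-$, it is itself reductive, so $H'_{\on{red}}=M_K$ and the shift in \thmref{t:pseudo-id and av rel} is exactly $[2\dim(X)-\dim(M_K)]$ as needed.

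The key preliminary step is to establish the symmetric-pair analog of \propref{p:equiv ULA}: every object in the essential image of
$$\oblv_{M_K\cdot N^-/M_K}:\Dmod_\lambda(M_K\cdot N^-\backslash X)^c\to \Dmod_\lambda(M_K\backslash X)$$
is ULA with respect to $f:M_K\backslash X\to M_K\cdot N\backslash X$. Since the ULA property is local in the smooth topology on both source and target, I would check this after smooth pullback along $X\to M_K\backslash X$, where it becomes the assertion that a coherent $(M_K\cdot N^-)$-equivariant twisted D-module on $X$ is ULA with respect to $X\to M_K\cdot N\backslash X$. This now follows directly from \propref{p:trans} combined with the transversality of the $M_K\cdot N$- and $M_K\cdot N^-$-actions on $X$ recorded in \lemref{l:parab orbits}(b).

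With the ULA statement in hand, \thmref{t:pseudo-id and av rel} yields a natural isomorphism
$$\Av^{M_K\cdot N/M_K}_!\simeq \on{Ps-Id}_{M_K\cdot N\backslash X}\circ \Av^{M_K\cdot N/M_K}_*\,[2\dim(X)-\dim(M_K)]$$
on the essential image of $\oblv_{M_K\cdot N^-/M_K}$. Composing both sides on the right with $\oblv_{N^-}$ and using the symmetric-pair version of \propref{p:Ups X} (noted immediately after the statement of \thmref{t:serre K} in \secref{sss:modify}), the left-hand side becomes $(\Upsilon^-)^{-1}$ while the right-hand side becomes $\on{Ps-Id}_{M_K\cdot N\backslash X}\circ \Upsilon\,[2\dim(X)-\dim(M_K)]$. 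Rearranging gives
$$\on{Ps-Id}_{M_K\cdot N\backslash X}[2\dim(X)-\dim(M_K)]\simeq (\Upsilon^-)^{-1}\circ \Upsilon^{-1},$$
which simultaneously exhibits $\on{Ps-Id}_{M_K\cdot N\backslash X}$ as a self-equivalence and provides the stated identification.

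The main obstacle is really the first step: verifying that the transversality from \lemref{l:parab orbits}(b) combined with \propref{p:trans} suffices to control ULA on the quotient $M_K\backslash X$ (rather than on $X$), and keeping track of the dimension shift contributed by the reductive quotient $H'_{\on{red}}=M_K$ in \thmref{t:pseudo-id and av rel}. Once these are settled, the rest of the argument is formal manipulation of adjoints and compositions of continuous functors.
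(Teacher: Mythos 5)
Your proof is correct and follows the route the paper intends: it transfers the proof of \thmref{t:serre} to the symmetric-pair setting using exactly the tools the paper provides for that purpose. The paper itself does not write out a proof of \thmref{t:serre K}; it just states it as the analog of \thmref{t:serre} and relies on \thmref{t:pseudo-id and av rel}, \propref{p:trans}, and \lemref{l:parab orbits}(b). You have correctly identified all three ingredients: the relative version of the pseudo-identity theorem with $H=M_K\cdot N$, $H'=M_K$ and $H'_{\on{red}}=M_K$ (since $M_K$, as the fixed points of an involution of the reductive Levi $M=P\cap P^-$, is itself reductive, which gives the shift $2\dim(X)-\dim(M_K)$); the transversality from \lemref{l:parab orbits}(b); and the descent of the ULA check to $X$ via smooth-locality on the source, followed by \propref{p:trans} --- this last step is precisely the mechanism the paper uses in the proof of \propref{p:K impl}(b). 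The final rearrangement using the symmetric-pair version of \propref{p:Ups X} to express $(\Upsilon^-)^{-1}$ and to cancel $\Upsilon$ is the same algebraic step as in the proof of \thmref{t:serre}. No gaps.
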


\medskip

\thmref{t:J self-dual on g K}(a) is proved as \thmref{t:J self-dual on g}(a).
\thmref{t:J self-dual on g K}(b) is proved using the following version of \propref{p:Loc and ULA}(a):

\begin{prop}  \label{p:Loc and ULA gK} \hfill

\smallskip

\noindent{\em(a)}
The functor $\on{Loc}$ sends 
$$\Ind^\wedge((\fg\mod^{M_K}_\chi)^{\fn\on{-f.g.}})\to 
\Ind^\wedge(\Dmod_\lambda(M_K\backslash X)^{\on{ULA}}).$$

\smallskip

\noindent{\em(b)}
The functor $\Gamma$ sends 
$$\Dmod_\lambda(M_K\backslash X)^{\on{ULA}}\to (\fg\mod^{M_K}_\chi)^{\fn\on{-f.g.}}$$
and 
$$\Ind^\wedge(\Dmod_\lambda(M_K\backslash X)^{\on{ULA}})\to 
\Ind^\wedge((\fg\mod^{M_K}_\chi)^{\fn\on{-f.g.}}).$$

\end{prop}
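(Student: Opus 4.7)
The plan is to bootstrap both statements from their non-equivariant analogues \propref{p:Loc and ULA}, by checking that the subcategories appearing on either side are characterized by conditions on the underlying (non-equivariant) object, and that the functors $\on{Loc}$ and $\Gamma$ commute with forgetting $M_K$-equivariance.

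First I would set up the equivariant adjunction. Since both $\on{Loc}$ and $\Gamma$ are compatible with the $G$-action (\secref{sss:localization general}), they induce an adjoint pair
$$\on{Loc}^{M_K}:\fg\mod_\chi^{M_K}\rightleftarrows \Dmod_\lambda(M_K\backslash X):\Gamma^{M_K},$$
that fits into a commuting diagram with the forgetful functors $\oblv_{M_K}$ (on both sides). The point is that $\Gamma$ as a functor out of the $M_K$-equivariant category is computed by first forgetting and then taking global sections: $\oblv_{M_K}\circ \Gamma^{M_K} \simeq \Gamma\circ \oblv_{M_K}$; likewise for $\on{Loc}$.

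Next I would record two characterizations that identify the equivariant subcategories with preimages under $\oblv_{M_K}$ of their non-equivariant counterparts: by definition, $(\fg\mod_\chi^{M_K})^{\fn\on{-f.g.}}$ is the preimage of $\fg\mod_\chi^{\fn\on{-f.g.}}$; and $\CF\in \Dmod_\lambda(M_K\backslash X)$ is ULA with respect to $M_K\backslash X\to M_K\cdot N\backslash X$ if and only if $\oblv_{M_K}(\CF)\in \Dmod_\lambda(X)$ is ULA with respect to $X\to N\backslash X$. The latter equivalence uses the fact that the ULA condition is local in the smooth topology on the source (\secref{sss:ULA}), combined with the Cartesian square relating the two projections and the smooth surjection $X\to M_K\backslash X$.

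Given these inputs, (b) is immediate: for $\CF\in \Dmod_\lambda(M_K\backslash X)^{\on{ULA}}$, the object $\oblv_{M_K}\Gamma^{M_K}(\CF) \simeq \Gamma(\oblv_{M_K}\CF)$ lies in $\fg\mod_\chi^{\fn\on{-f.g.}}$ by \propref{p:Loc and ULA}(b), hence $\Gamma^{M_K}(\CF)$ lies in $(\fg\mod_\chi^{M_K})^{\fn\on{-f.g.}}$. For the $\Ind^\wedge$ version, I would use that $\Gamma^{M_K}$ is continuous and has bounded cohomological amplitude (since $X$ is proper), so that it sends objects whose cohomologies are filtered colimits of equivariant ULA modules to objects whose cohomologies are filtered colimits of equivariant $\fn$-f.g. modules, as needed. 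For (a) I would run the parallel argument, invoking \propref{p:Loc and ULA}(a) (and the remark following it, saying that $\on{Loc}$ actually lands in $\Ind$ of ULA objects) after forgetting $M_K$-equivariance; the translation functors used there are $G$-equivariant, hence descend to the $M_K$-equivariant setting and permit the same reduction to the regular dominant case.

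The only slightly delicate point — and the place where care is required — is the compatibility with the left completion $\Ind^\wedge$, because $\on{Loc}$ can have unbounded cohomological amplitude when $\chi$ is irregular. This is handled exactly as in the proof of \propref{p:Loc and ULA}(a): the translation functor $T_{\chi\to\chi'}$ is exact and $G$-equivariant, and by \corref{c:bad trans} it sends $\fn$-f.g. modules to filtered colimits of $\fn$-f.g. modules, which keeps us inside the $\Ind^\wedge$-completion after passing to the regular dominant $\lambda'$ where $\on{Loc}$ is t-exact. All the other steps are cohomologically bounded and continuous, so no additional convergence issue arises.
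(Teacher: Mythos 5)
Your proof is correct and takes essentially the same route as the paper's: reduce both statements to the non-equivariant \propref{p:Loc and ULA} by using that $\on{Loc}$ and $\Gamma$ commute with $\oblv_{M_K}$, that $(\fg\mod_\chi^{M_K})^{\fn\on{-f.g.}}$ is by definition the preimage of $\fg\mod_\chi^{\fn\on{-f.g.}}$ under $\oblv_{M_K}$, and that the ULA condition for $M_K\backslash X\to M_K\cdot N\backslash X$ is equivalent to the ULA condition for $X\to N\backslash X$ after forgetting (via smooth-locality of ULA on source and target, together with the Cartesian square). Your write-up is a bit more explicit than the paper's (which only spells out (a) and says (b) is ``similar''), in particular in recording the full ULA-equivalence needed for (b) rather than just the one implication used in (a), and in laying out the $\Ind^\wedge$ bookkeeping, but no new idea or alternative decomposition is introduced.
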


\begin{proof}

For point (a), it is enough to see that $\on{Loc}$ sends an object $\CF$ in 
$(\fg\mod_\chi^{M_K})^{\fn\on{-f.g.}}$ to an object whose cohomologies are ULA with respect to $M_K \backslash X \to M_K \cdot N \backslash X$. 

\medskip

Being ULA is smooth-local on the source, hence it is enough to see that the cohomologies of $\oblv_{M_K} \circ\on{Loc} (\CF)$ are ULA with 
respect to $X \to M_K\cdot N \backslash X$. 

\medskip

For this, it is enough to see that the cohomologies of $\oblv_{M_K}\circ \on{Loc} (\CF) \simeq \on{Loc} \circ \oblv_{M_K} (\CF)$ are ULA with 
respect to $X \to N \backslash X$, which is the case by \propref{p:Loc and ULA}.

\medskip

Point (b) is proved similarly.

\end{proof}

\ssec{The Casselman-Jacquet functor for $(\fg,K)$-modules}

\sssec{}

In this subsection we will prove:

\begin{thm}  \label{t:J exact on K}  \hfill

\smallskip

\noindent{\em(a)} 
The functor
$$\fg\mod^K_\chi\overset{\oblv_{K/M_K}}\longrightarrow \fg\mod^{M_K}_\chi \overset{J}\to 
\fg\mod_\chi^{M_K\cdot N^-}$$ 
identifies canonically with
$$\Av^{N^-}_*\circ \oblv_{N}\circ  \Av^N_!\circ \oblv_{K/M_K}$$
and is t-exact.

\smallskip

\noindent{\em(b)} The functor 
$$\Dmod_\lambda(X)^K \overset{\oblv_{K/M_K}}\longrightarrow \Dmod_\lambda(X)^{M_K}\overset{J}\to \Dmod_\lambda(X)^{M_K\cdot N^-}$$ 
identifies canonically with
$$\Av^{N^-}_*\circ \oblv_{N}\circ  \Av^N_!\circ \oblv_{K/M_K}$$
and is t-exact.
\end{thm}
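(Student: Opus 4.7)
My plan is to deduce the identification $J\circ \oblv_{K/M_K}\simeq \Av^{N^-}_*\circ \oblv_N\circ \Av^N_!\circ \oblv_{K/M_K}$ directly from \thmref{t:J self-dual on g K}(b) and \thmref{t:J self-dual on X K}(b), once I verify that $\oblv_{K/M_K}$ lands in the subcategory on which the two expressions for $J$ coincide. In part (b), the transversality of $K$ and $M_K\cdot N$ on $X$ recorded in \lemref{l:parab orbits}(b), combined with \propref{p:trans}, implies that any coherent $K$-equivariant twisted D-module is ULA with respect to $X\to M_K\cdot N\backslash X$, hence $\oblv_{K/M_K}(\Dmod_\lambda(X)^K)\subset \Ind^\wedge(\Dmod_\lambda(M_K\backslash X)^{\on{ULA}})$. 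In part (a), I will invoke the classical Harish--Chandra finiteness result (a finitely generated $(\fg,K)$-module with a fixed central character is finitely generated over $U(\fn)$, one of the original motivations of the Casselman--Jacquet construction), which yields the analogous containment $\oblv_{K/M_K}(\fg\mod_\chi^K)\subset \Ind^\wedge((\fg\mod_\chi^{M_K})^{\fn\on{-f.g.}})$.

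For t-exactness, left-t-exactness is immediate from \propref{p:J left t exact}(b), the containments just established, and the t-exactness of $\oblv_{K/M_K}$. The right-t-exactness is the more delicate half: I intend to argue it in the D-module setting of (b) and then transfer to (a) via localization. The transfer goes through a reduction to regular dominant $\chi$ by the translation functors of \secref{ss:trans} (with the key input being the description of $T_{\chi\to \chi'}$ given by \corref{c:bad trans}, which preserves $\fn$-finite generation), and then the localization adjunction together with \propref{p:Loc and ULA gK} and \propref{p:good Av!} to carry ULA statements back to $\fn$-f.g. statements.

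Geometrically, for $\CF\in \Dmod_\lambda(X)^K$ I plan to exploit that $\oblv_{K/M_K}(\CF)$ is simultaneously ULA with respect to $X\to M_K\cdot N\backslash X$ and $X\to M_K\cdot N^-\backslash X$ (again by \lemref{l:parab orbits}(b)). This double ULA property, combined with \thmref{t:pseudo-id and av rel} and the self-equivalence of \thmref{t:serre K}, should let me rewrite $\Av^N_!\circ \oblv_{K/M_K}$ in terms of $\Av^N_*\circ \oblv_{K/M_K}$ up to the pseudo-identity, thereby pinning $J\circ \oblv_{K/M_K}$ simultaneously between right- and left-t-exact composites.

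The main obstacle I expect is precisely this last step: the functor $\Av^{N^-}_*$ is only left-t-exact in general, and I must show that it produces no higher cohomology when evaluated on the specific object $\oblv_N\circ \Av^N_!\circ \oblv_{K/M_K}(\CF)$. My expectation is that the double ULA condition and the pseudo-identity machinery of \secref{s:Psi} force this degeneration by showing that $\Av^{N^-}_*\circ \oblv_N\circ \Av^N_!$ coincides on these objects with the other, a priori left-t-exact, expression $\Av^{N^-}_!\circ \oblv_N\circ \Av^N_*$ for $J$ furnished by \propref{p:main}; getting the two expressions to agree without loss of a cohomological shift is where the geometric input of \secref{s:Psi} is genuinely needed.
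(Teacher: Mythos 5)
Your proposal matches the paper's proof for the identification of $J\circ\oblv_{K/M_K}$ with $\Av^{N^-}_*\circ\oblv_N\circ\Av^N_!\circ\oblv_{K/M_K}$ (via \thmref{t:J self-dual on g K}(b), \thmref{t:J self-dual on X K}(b), and the finiteness/ULA containments recorded in \propref{p:K impl}), and for deducing left t-exactness from \propref{p:J left t exact}. The transfer of right t-exactness from the D-module side to the $\fg$-module side via localization also agrees with the paper.

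However, your route to right t-exactness in case (b) is where the gap lies. You propose to exploit the ``double ULA'' condition and the pseudo-identity machinery to show that $\Av^{N^-}_*$ produces no higher cohomology on $\oblv_N\circ\Av^N_!\circ\oblv_{K/M_K}(\CF)$, but you do not actually derive this, and the sketch as written is circular: the agreement of the two expressions $\Av^{N^-}_*\circ\oblv_N\circ\Av^N_!$ and $\Av^{N^-}_!\circ\oblv_N\circ\Av^N_*$ is precisely what \thmref{t:J self-dual on X K}(b) gives you at the outset, and neither composite is a priori right t-exact (you call $\Av^{N^-}_!\circ\oblv_N\circ\Av^N_*$ ``a priori left-t-exact,'' but it is a composition of a left t-exact functor followed by a right t-exact one, which carries no exactness information). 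The paper's actual mechanism is Verdier duality: since $\oblv_K$ sends $\Dmod_\lambda(K\backslash X)^c$ to holonomic objects (\propref{p:K impl}(c)), Verdier duality is available, and it interchanges $\Av^{N^-}_*\circ\oblv_N\circ\Av^N_!$ with $\Av^{N^-}_!\circ\oblv_N\circ\Av^N_*$; since these agree on the image of $\oblv_{K/M_K}$, the functor $J\circ\oblv_{K/M_K}$ is Verdier self-dual there, and Verdier duality (being contravariant and swapping $\leq 0$ with $\geq 0$) converts the left t-exactness of Step 1 into right t-exactness. Holonomicity is the missing ingredient in your write-up; the pseudo-identity is not needed at this particular step, and the ``double ULA'' observation, while true, does not by itself force the collapse you are after.
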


\sssec{}

We will first prove:

\begin{prop}  \hfill  \label{p:K impl}

\smallskip

\noindent{\em(a)} The functor $\oblv_{K/M_K}$ maps $(\fg\mod^K_\chi)^c$ to $(\fg\mod_\chi^{M_K})^{\fn\on{-f.g.}}$.

\smallskip

\noindent{\em(b)} The functor $\oblv_{K/M_K}$ maps $\Dmod_\lambda(K\backslash X)^c$ to objects in 
$\Dmod_\lambda(M_K \backslash  X)^{\on{ULA}}$,
where the ULA condition is taken with respect to $M_K \backslash X\to M_K N\backslash X$.

\smallskip

\noindent{\em(c)} The functor $\oblv_K$ maps $\Dmod_\lambda(K\backslash X)^c$ to objects in $\Dmod_\lambda(X)$ that
are holonomic.

\end{prop}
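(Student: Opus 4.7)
The plan is to prove the three parts in the order $(c)\Rightarrow(b)\Rightarrow(a)$, reducing each to results already established in the paper (finite orbit structure of \lemref{l:parab orbits}, transversality via \propref{p:trans}, localization, and \propref{p:Loc and ULA gK}).

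For (c), a compact object in $\Dmod_\lambda(K\backslash X)$ is a coherent twisted D-module on the quotient stack. Since $K$ acts on $X$ with finitely many orbits by \lemref{l:parab orbits}(a), the characteristic variety of any such coherent object is contained in the union of the conormals to the $K$-orbits, so the object is automatically holonomic. The forgetful functor $\oblv_K$ is, up to a shift, the $*$-pullback along the smooth atlas $X\to K\backslash X$, and this operation preserves holonomicity.

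For (b), I would apply \propref{p:trans} with $H_1=K$ and $H_2=M_K\cdot N$ acting on $X$; the required transversality is precisely \lemref{l:parab orbits}(b). For $\CF\in\Dmod_\lambda(K\backslash X)^c$, part (c) gives that $\oblv_K(\CF)$ is holonomic, hence coherent, so \propref{p:trans} produces that $\oblv_K(\CF)$ is ULA with respect to $X\to M_K\cdot N\backslash X$. Since the ULA property is smooth-local on the source (\secref{sss:ULA}) and $X\to M_K\backslash X$ is a smooth surjection, this is equivalent to $\oblv_{K/M_K}(\CF)\in\Dmod_\lambda(M_K\backslash X)$ being ULA with respect to $M_K\backslash X\to M_K\cdot N\backslash X$, as desired.

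For (a), I would bootstrap from (b) using the localization adjunction and \propref{p:Loc and ULA gK}(b). When $\lambda$ is regular and dominant, \thmref{t:localization} makes $\on{Loc}$ a t-exact equivalence preserving compactness, so for $\CM\in(\fg\mod^K_\chi)^c$ the object $\on{Loc}(\CM)\in\Dmod_\lambda(X)^K$ is compact; by (b), $\oblv_{K/M_K}(\on{Loc}(\CM))$ lies in $\Dmod_\lambda(M_K\backslash X)^{\on{ULA}}$ (not just its $\Ind^\wedge$-completion), so \propref{p:Loc and ULA gK}(b) together with the commutation $\Gamma\circ\oblv_{K/M_K}\simeq\oblv_{K/M_K}\circ\Gamma$ and $\Gamma\circ\on{Loc}\simeq\on{Id}$ places $\oblv_{K/M_K}(\CM)$ in $(\fg\mod^{M_K}_\chi)^{\fn\on{-f.g.}}$. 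For a general $\chi$, the reduction to the regular case proceeds via the translation functors $T_{\chi\to\chi'}$, $T_{\chi'\to\chi}$ of \secref{ss:trans} for $\chi'$ regular dominant: at the level of underlying $\fn$-modules these amount (up to direct summand) to tensoring with a finite-dimensional representation by \corref{c:bad trans}, so they preserve compactness in the $(\fg,K)$-module categories and are compatible with the $\fn$-f.g. subcategory.

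The main obstacle I expect is this last reduction in (a): carefully tracking the $\fn$-f.g. property through the translation functors at the DG level, since for non-regular $\chi$ the functor $\on{Loc}$ has unbounded cohomological amplitude on the left and is not an equivalence. A cleaner alternative that sidesteps this difficulty is to invoke the classical theorem of Harish-Chandra/Casselman that any Harish-Chandra module at a fixed central character is finitely generated over $U(\fn)$, applied to the (finitely many) nonzero cohomologies of a compact $\CM\in(\fg\mod^K_\chi)^c$; this trades the geometric reduction for a piece of classical input but yields (a) in one stroke.
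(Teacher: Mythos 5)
Your treatment of parts (b) and (c) matches the paper almost exactly. For (c) both invoke finiteness of $K$-orbits on $X$ (your spelling-out of the characteristic-variety argument is just the standard unwinding). For (b) both invoke \propref{p:trans} together with \lemref{l:parab orbits}(b) and smooth-locality of the ULA condition; your detour through (c) and holonomicity to establish coherence is slightly roundabout (compact objects on a QCA stack are already coherent, so pullback along the smooth atlas is coherent without appealing to holonomicity), but this is cosmetic.

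Where you genuinely diverge from the paper is part (a), and there you have correctly identified your own gap. The paper's proof is direct and entirely algebraic: it reduces to showing that the generating objects $(U(\fg)\otimes_{U(\fk)}\rho)\otimes_{Z(\fg)}k$, $\rho\in\Rep(K)^{\mathrm{f.d.}}$, are $\fn$-finitely generated, and verifies that $U(\fg)\otimes_{U(\fk)}\rho$ is finitely generated over $U(\fn)\otimes Z(\fg)$ by passing to associated graded. This is nothing but the standard Casselman--Osborne argument, made self-contained, and crucially it works uniformly in $\chi$ (regularity plays no role because the $\fn$-f.g.\ property only concerns the underlying $\fn$-module).

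Your primary route for (a) --- localize, apply (b), then reduce the general $\chi$ to a regular $\chi'$ via translation functors --- runs into exactly the difficulty you flag. Translation functors go the wrong way: to deduce the $\fn$-f.g.\ property of $\CM$ at $\chi$ from that of $T_{\chi\to\chi'}(\CM)$ at $\chi'$, you would need $\CM$ to be a retract of $T_{\chi'\to\chi}\circ T_{\chi\to\chi'}(\CM)$, and the unit of that adjunction does not split for singular $\chi$ (this is precisely the wall-crossing phenomenon). So the reduction as stated does not close. Your proposed alternative --- citing the classical theorem that Harish-Chandra modules are $U(\fn)$-finitely generated, applied to the (finitely many) cohomologies of a compact object --- is correct and is, modulo self-containment, exactly the paper's argument. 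In short: for (a), abandon the localization/translation route and go straight to the associated-graded computation (or the classical citation), which also makes the regular/singular dichotomy disappear.
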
 

\begin{proof}

Point (a) is well-known: it is enough to show that objects of the form
$$(U(\fg)\underset{U(\fk)}\otimes \rho)\underset{Z(\fg)}\otimes k,\quad \rho\in \Rep(K)^{\on{f.d.}}$$
where $Z(\fg)\to k$ is given by $\chi$, belong to  $\fg\mod_\chi^{\fn\on{-f.g.}}$.
For that it suffices to show that $U(\fg)\underset{U(\fk)}\otimes \rho$ is finitely generated
over $U(\fn)\otimes Z(\fg)$, and this 
follows from the corresponding assertion at the associated graded level. 

\medskip

To show point (b), since the property of being ULA is smooth-local on the source, it is enough to show that $\oblv_{K}$ maps  
$\Dmod_\lambda(K\backslash X)^c$ to objects in $\Dmod_\lambda(X)$ that are ULA with respect to $X \to M_K \cdot N \backslash X$. 
This follows from \propref{p:trans} and \lemref{l:parab orbits}(b).

\medskip

Alternative proof: change the twisting $\lambda$ by an integral amount to make 
$\Gamma$ an equivalence. Then point (b) follows from point (a), combined with \propref{p:Loc and ULA gK}.

\medskip

Finally, point (c) follows from the fact that the group $K$ has finitely many orbits on $X$.

\end{proof} 

\ssec{Proof of \thmref{t:J exact on K}}

\sssec{}

First, we note that the fact that $J\circ \oblv_{K/M_K}$ is isomorphic to $$\Av^{N^-}_*\circ \oblv_{N}\circ  \Av^N_!\circ \oblv_{K/M_K}$$
follows from \thmref{t:J self-dual on g K}(b) and \propref{p:K impl}(a)
(resp., \thmref{t:J self-dual on X K}(b) and \propref{p:K impl}(b)). 

\medskip

To prove the t-exactness, we proceed as follows: 

\sssec{Step 1}

First, we claim that the functor $J\circ \oblv_{K/M_K}$ is left t-exact for $\Dmod_\lambda(K\backslash X)$.

\medskip

This statement is insensitive to changing $\lambda$ by an integral twisting.  Hence, we can assume that $\lambda$
is such that $\Gamma$ is an equivalence. Since $\Gamma$ is t-exact, the assertion now follows from Propositions \ref{p:K impl}(a)
and \ref{p:J left t exact}.

\sssec{Step 2}

We now claim that the functor $J\circ \oblv_{K/M_K}$ is right t-exact, still for $\Dmod_\lambda(K\backslash X)$.

\medskip

Indeed, this follows by Verdier duality from the previous step, using \propref{p:K impl}(c).

\sssec{Step 3}

It remains to show that $J\circ \oblv_{K/M_K}$ is right t-exact on $\fg\mod_\chi^K$. Since $\lambda$ was chosen so that $\Gamma$ is 
t-exact, the functor $\on{Loc}$ is right t-exact. 

\medskip

We have:
$$J\circ \oblv_{K/M_K}\simeq \Gamma\circ J\circ \oblv_{K/M_K}\circ \on{Loc},$$
where the right-hand side is a composition of t-exact and right t-exact functors. 

\qed[\thmref{t:J self-dual on g}]

\ssec{The ``2nd adjointness" conjecture}  \label{ss:2nd adj}

In the previous subsection we studied the functors
$$J\circ \oblv_{K/M_K}:\fg\mod_\chi^K\to \fg\mod_\chi^{M_K\cdot N^-}$$
and 
$$J\circ \oblv_{K/M_K}:\Dmod_\lambda(K\backslash X)\to \Dmod_\lambda(M_K\cdot N^-\backslash X).$$

In this subsection we will study functors in the opposite direction, namely, $$\Av^{K/M_K}_! \text{ and } \Av^{K/M_K}_*$$ that go from 
$\fg\mod_\chi^{M_K\cdot N}$ (or $\fg\mod_\chi^{M_K\cdot N^-}$) to $\fg\mod_\chi^K$ and from 
$\Dmod_\lambda(M_K\cdot N\backslash X)$ (or $\Dmod_\lambda(M_K\cdot N^-\backslash X)$) to $\Dmod_\lambda(K\backslash X)$,
respectively. 

\sssec{}

First, we note the following consequence of \lemref{l:parab orbits}, \propref{p:trans} and \thmref{t:pseudo-id and av rel}:

\begin{cor} \label{c:Av K}
We have a canonical isomorphism $$\Av^{K/M_K}_!\simeq \on{Ps-Id}_{K\backslash X}\circ \Av^{K/M_K}_*[2\dim(X)-\dim(M_K)]$$ as functors 
$\Dmod_\lambda(M_K\cdot N\backslash X)\to \Dmod_\lambda(K\backslash X)$.
\end{cor}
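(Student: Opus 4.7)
The plan is to apply \thmref{t:pseudo-id and av rel} with $H=K$ and $H'=M_K\cdot N$. Since $N$ is unipotent, the reductive quotient of $H'$ is $M_K$, so $\dim(H'_{\on{red}})=\dim(M_K)$, which matches the cohomological shift $2\dim(X)-\dim(M_K)$ appearing in the statement of the corollary. Thus \thmref{t:pseudo-id and av rel}(a) produces a canonical natural transformation
\[
\Av^{K/M_K}_!\to \on{Ps-Id}_{K\backslash X}\circ \Av^{K/M_K}_*[2\dim(X)-\dim(M_K)],
\]
and by part (b) of that theorem, this map is an isomorphism on the full subcategory $\Ind^\wedge(\Dmod_\lambda(M_K\cdot N\backslash X)^{\on{ULA}})$, where the ULA condition is with respect to the quotient map $f:M_K\cdot N\backslash X\to K\backslash X$.

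The remaining task is to show that \emph{every} object of $\Dmod_\lambda(M_K\cdot N\backslash X)$ belongs to this $\Ind^\wedge$ subcategory, so that the natural transformation above is actually an isomorphism of functors on the entire source category. First I would show that every coherent object of $\Dmod_\lambda(M_K\cdot N\backslash X)$ lies in the heart-level subcategory $\Dmod_\lambda(M_K\cdot N\backslash X)^{\on{ULA}}$. For this, by \lemref{l:parab orbits}(b), the actions of $M_K\cdot N$ and $K$ on $X$ are transversal; hence \propref{p:trans} applies with $H_1=M_K\cdot N$ and $H_2=K$ to show that for any $\CF\in \Dmod_\lambda(M_K\cdot N\backslash X)$ whose underlying D-module $\oblv_{M_K\cdot N}(\CF)$ on $X$ is coherent, that D-module is ULA with respect to $X\to K\backslash X$. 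Since the ULA property is smooth-local on the source and $X\to M_K\cdot N\backslash X$ is a smooth surjection, this is equivalent to $\CF$ being ULA with respect to $f$.

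Next I would observe that an arbitrary object of $\Dmod_\lambda(M_K\cdot N\backslash X)$ has cohomologies that are filtered colimits of coherent objects in the heart; combining this with the previous step and the definition of $\Ind^\wedge(\Dmod_\lambda(M_K\cdot N\backslash X)^{\on{ULA}})$ given in \secref{sss:Ind ULA}, we conclude that every object lies in this $\Ind^\wedge$ subcategory, and the isomorphism therefore extends to all of $\Dmod_\lambda(M_K\cdot N\backslash X)$.

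The main point where care is needed is the passage between the ULA conditions for $X\to K\backslash X$ and for $M_K\cdot N\backslash X\to K\backslash X$ via smooth descent; this is essentially the same maneuver used in the proof of \propref{p:Loc and ULA gK}(a), and so does not represent a real obstacle. Beyond this there is no difficulty: the result is a direct specialization of \thmref{t:pseudo-id and av rel} once transversality delivers the ULA property for free.
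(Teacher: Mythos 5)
Your plan has a genuine error at its first step: you propose to apply Theorem~\ref{t:pseudo-id and av rel} with $H = K$ and $H' = M_K\cdot N$, but this theorem requires $H'\subset H$, and $M_K\cdot N$ is \emph{not} a subgroup of $K$ (since $N\cap K$ is trivial: $\theta$ carries $N$ to the opposite unipotent radical $N^-$, and $N\cap N^-$ is trivial). As a consequence there is no map $M_K\cdot N\backslash X\to K\backslash X$, so the ``ULA with respect to $f:M_K\cdot N\backslash X\to K\backslash X$'' condition you invoke is not well defined, and the natural transformation you claim from Theorem~\ref{t:pseudo-id and av rel}(a) doesn't exist as stated. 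The fact that your computed shift $2\dim(X)-\dim(M_K)$ still matches the one in the corollary is a coincidence (the reductive quotients of $M_K\cdot N$ and of $M_K$ both have dimension $\dim(M_K)$), so the shift didn't flag the problem.

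The correct application is $H'=M_K$, $H=K$. Then Theorem~\ref{t:pseudo-id and av rel} gives the natural transformation
\[
\Av^{K/M_K}_!\;\longrightarrow\; \on{Ps-Id}_{K\backslash X}\circ \Av^{K/M_K}_*[2\dim(X)-\dim(M_K)],
\]
as (partially defined) functors $\Dmod_\lambda(M_K\backslash X)\to \Dmod_\lambda(K\backslash X)$, an isomorphism on $\Ind^\wedge(\Dmod_\lambda(M_K\backslash X)^{\on{ULA}})$ where ULA is taken with respect to $M_K\backslash X\to K\backslash X$. What remains --- and here the rest of your argument is essentially right --- is to show that the essential image of $\oblv_N\colon\Dmod_\lambda(M_K\cdot N\backslash X)\to\Dmod_\lambda(M_K\backslash X)$ lands in this $\Ind^\wedge$ subcategory. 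On coherent objects this follows exactly as you say from Lemma~\ref{l:parab orbits}(b) and Proposition~\ref{p:trans} (applied with $H_1=M_K\cdot N$, $H_2=K$ to get ULA of $\oblv_{M_K\cdot N}(\CF)$ over $X\to K\backslash X$), followed by the smooth-local descent of the ULA property along $X\to M_K\backslash X$ to get ULA of $\oblv_N(\CF)$ over $M_K\backslash X\to K\backslash X$. General objects are then handled by taking filtered colimits of cohomologies as you propose. With this correction your proof follows the paper's intended argument.
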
 

Combining with \propref{p:good Av!}, we obtain:

\begin{cor}
The partially defined functor $\Av^{K/M_K}_!$ is defined on the essential image of 
$$\oblv_N:\fg\mod_\chi^{M_K\cdot N}\to \fg\mod_\chi^K.$$
\end{cor}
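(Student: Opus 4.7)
The plan is to reduce the statement to \corref{c:Av K} via \propref{p:good Av!}. Given $\CM \in \fg\mod_\chi^{M_K\cdot N}$, let $\CM' := \oblv_N(\CM) \in \fg\mod_\chi^{M_K}$; the claim is that $\Av^{K/M_K}_!$ is defined on $\CM'$.

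First I would apply $\on{Loc}$. Since the localization adjunction is $G$-equivariant, it intertwines the forgetful functors, so $\on{Loc}(\CM')$ is the image under $\oblv_N$ of $\on{Loc}(\CM) \in \Dmod_\lambda(M_K\cdot N\backslash X)$; in particular $\on{Loc}(\CM')$ lies in the essential image of $\oblv_N : \Dmod_\lambda(M_K\cdot N\backslash X) \to \Dmod_\lambda(M_K\backslash X)$. By \corref{c:Av K}, the partially defined functor $\Av^{K/M_K}_!$, when restricted to this essential image, is isomorphic to the everywhere-defined composite $\on{Ps-Id}_{K\backslash X}\circ \Av^{K/M_K}_*[2\dim(X)-\dim(M_K)]$. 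Hence $\Av^{K/M_K}_!$ is defined on $\on{Loc}(\CM')$.

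Now I would apply \propref{p:good Av!} with $H_1 = M_K \subset H_2 = K$: having verified its hypothesis that $\Av^{K/M_K}_!$ is defined on $\on{Loc}(\CM')$, the proposition delivers that $\Av^{K/M_K}_!$ is defined on $\CM'$ itself, together with the identifications $\on{Loc}(\Av^{K/M_K}_!(\CM')) \simeq \Av^{K/M_K}_!(\on{Loc}(\CM'))$ and $\Av^{K/M_K}_!(\CM') \simeq \Gamma\circ \Av^{K/M_K}_!\circ \on{Loc}(\CM')$.

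I do not anticipate any real obstacle: the argument is a direct assembly of two prior results. The only point requiring minor care is the compatibility between the $(\on{Loc},\Gamma)$ adjunction and the forgetful functors out of $M_K\cdot N$-equivariant categories, but this is built into the $G$-equivariance of localization and is invoked in essentially the same fashion as in the proof of \propref{p:Ups g}.
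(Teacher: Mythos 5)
Your proof is correct and takes exactly the route the paper intends: the paper's "proof" of this corollary consists of the single phrase ``Combining with \propref{p:good Av!}, we obtain,'' and your argument spells out precisely that combination — use $G$-equivariance of $\on{Loc}$ to transport the problem to $\Dmod_\lambda(X)$, invoke \corref{c:Av K} to see that $\Av^{K/M_K}_!$ is everywhere defined on the essential image of $\oblv_N$ there, and then apply \propref{p:good Av!} with $H_1 = M_K \subset H_2 = K$ to descend the conclusion back to $\fg\mod_\chi^{M_K}$.

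One small remark: the codomain of $\oblv_N$ in the corollary's statement should read $\fg\mod_\chi^{M_K}$ rather than $\fg\mod_\chi^K$ (a typo in the paper); you have implicitly, and correctly, read it that way.
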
 

\sssec{}

For a group $H$, let us write $$\fl_H = \Lambda^{\dim(H)}(\fh)[\dim H].$$ For a pair of groups $H_1\subset H_2$, set 
$\fl_{H_2/H_1} = \fl_{H_2} \otimes \fl_{H_1}^{-1}$.

\medskip

The same symbols will also stand for the functors of tensoring by those lines. 
Set $\CC=\fg\mod_\chi$ or $\CC=\Dmod_\lambda(X)$. We propose the following conjecture:

\begin{conj} \label{c:main}
There exists a canonical isomorphism $$\Av^{K/M_K}_* \simeq \fl_{K/M_K}^{-1} \circ \Av^{K/M_K}_!\circ \Upsilon$$ as functors 
$$\CC^{M_K\cdot N^-}\to \CC^K.$$
\end{conj}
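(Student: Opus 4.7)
The plan is to first reduce to the case $\CC = \Dmod_\lambda(X)$ via localization, then use the pseudo-identity functor together with transversality to rewrite the right-hand side, and finally try to match both sides.

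\textbf{Reduction to the D-module case.} Choose $\lambda$ lying over $\chi$ that is dominant and regular, so $(\on{Loc},\Gamma)$ are mutually inverse equivalences by \thmref{t:localization}(a). All four functors appearing in \conjref{c:main} -- $\Av^{K/M_K}_*$, $\Av^{K/M_K}_!$, $\oblv_{N^-}$ and $\Upsilon$ -- commute with $\on{Loc}$ and $\Gamma$ on the relevant subcategories: the forgetful functors tautologically, $\Upsilon$ by construction, and the two averaging functors by \propref{p:good Av!} and its dual (or, when $\on{Loc}$ is an equivalence, by formal adjunction). Thus \conjref{c:main} for $\fg\mod_\chi$ at such $\chi$ follows from the same conjecture for $\Dmod_\lambda(X)$. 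For general $\chi$, the translation functors of \secref{ss:trans} commute with all averaging and forgetful functors and carry the isomorphism from one central character to another.

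\textbf{Pseudo-identity reformulation.} Let $\CF\in\Dmod_\lambda(M_K\cdot N^-\backslash X)$; then $\Upsilon(\CF)\in\Dmod_\lambda(M_K\cdot N\backslash X)$. By \lemref{l:parab orbits}(b) the actions of $M_K\cdot N$ and $K$ on $X$ are transversal, so by \propref{p:trans} together with smooth-locality of the ULA condition, the image of $\Upsilon(\CF)$ in $\Dmod_\lambda(M_K\backslash X)$ is ULA with respect to $M_K\backslash X\to K\backslash X$ (for coherent $\CF$; the general case via $\Ind^\wedge$). Applying \thmref{t:pseudo-id and av rel} with $H=K$ and $H'=M_K$,
$$\Av^{K/M_K}_! \circ \oblv_N\circ \Upsilon \simeq \on{Ps-Id}_{K\backslash X}\circ \Av^{K/M_K}_*\circ \oblv_N\circ \Upsilon\,[2\dim(X)-\dim(M_K)].$$
Hence \conjref{c:main} reduces to the isomorphism
$$\Av^{K/M_K}_*\circ \oblv_{N^-} \simeq \fl_{K/M_K}^{-1}[2\dim(X)-\dim(M_K)]\circ \on{Ps-Id}_{K\backslash X}\circ \Av^{K/M_K}_*\circ \oblv_N\circ \Upsilon,$$
i.e.\ to a statement about how $\on{Ps-Id}_{K\backslash X}$ interacts with the long intertwiner $\Upsilon$.

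\textbf{The main obstacle.} This last step is the substantive difficulty. Although \thmref{t:miraculous} together with \lemref{l:parab orbits}(a) guarantees that $\on{Ps-Id}_{K\backslash X}$ is an equivalence, we lack a factorization of it analogous to \thmref{t:serre K} -- which identified $\on{Ps-Id}_{M_K\cdot N\backslash X}$ with $\Upsilon^{-1}\circ (\Upsilon^-)^{-1}$ using transversality of the $N$- and $N^-$-actions together with Braden's theorem. The stack $K\backslash X$ simply does not admit such a clean decomposition. As discussed in \secref{sss:rel to BK}, \conjref{c:main} is essentially a reformulation of the ``2nd adjointness" \conjref{c:intro 2nd adj}, whose $\fp$-adic analog is proved in \cite{BK} via the geometry of the wonderful compactification near the stratum corresponding to $P$. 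Combined with the identification of $J$ with nearby cycles along the wonderful degeneration from \cite{ENV} and \cite{CY}, adapting the \cite{BK} strategy appears to be the most promising -- but presently incomplete -- route, to be pursued in the sequel.
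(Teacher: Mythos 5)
The statement you are asked to prove is a \emph{conjecture} in the paper (\conjref{c:main}); the authors provide no proof, and indeed say in the introduction that they do not know how to construct the unit or counit of the conjectural adjunction and leave the matter to a sequel. Your proposal is therefore not evaluable against a reference proof, but it can be judged on its own merits: the first two steps of your argument are correct and in fact reproduce precisely the reformulation that the paper itself performs. Your ``pseudo-identity reformulation'' is exactly the derivation of \conjref{c:funct eq} from \conjref{c:main} via \corref{c:Av K} (which is in turn deduced from \lemref{l:parab orbits}(b), \propref{p:trans}, and \thmref{t:pseudo-id and av rel}, as you say). And your diagnosis of the ``main obstacle'' -- that $\on{Ps-Id}_{K\backslash X}$ is known to be an equivalence by \thmref{t:miraculous} but lacks a Braden-type factorization analogous to \thmref{t:serre K} -- is precisely the point where the paper stops. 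So you have correctly identified that you have a reformulation, not a proof.

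Two minor cautions worth recording. First, the reduction to the D-module case for \emph{irregular} $\chi$ via translation functors is not as immediate as you suggest: the translation $T_{\chi'\to\chi}$ (from regular to singular) is not an equivalence, and one would need to argue not only that it intertwines all the averaging and forgetful functors involved but also that it ``detects'' the isomorphism in question; the paper does not supply this, and the conjecture is stated separately for both $\fg\mod_\chi$ and $\Dmod_\lambda(X)$ rather than with one derived from the other. Second, in invoking \thmref{t:pseudo-id and av rel} with $H'=M_K$ you silently replaced $\dim((M_K)_{\on{red}})$ by $\dim(M_K)$; this is fine here because $M_K=M^\theta$ is the fixed locus of an involution on the reductive Levi $M=P\cap P^-$ and is hence itself reductive, but it deserves a word. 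With those caveats, your proposal is an accurate account of how far the paper's machinery reaches, and an honest identification of what remains open.
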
 

In light of \corref{c:Av K}, in the case of $\CC=\Dmod_\lambda(X)$, we can reformulate \conjref{c:main} as follows:

\begin{conj} \label{c:funct eq}
The following diagram of functors commutes:
$$
\CD
\CC^K  @<{\Av^{K/M_K}_*}<< \CC^{M_K\cdot N} \\
@V{\fl_{K/M_K}^{-1} \circ \on{Ps-Id}_{K\backslash X}[2\dim (X) - \dim (M_K)]}VV   @VV{\Upsilon^{-1}}V  \\
\CC^K  @<{\Av^{K/M_K}_*}<< \CC^{M_K\cdot N^-}.
\endCD
$$
\end{conj}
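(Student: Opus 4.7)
The plan is to first reduce Conjecture~\ref{c:funct eq} to the D-module case $\CC=\Dmod_\lambda(X)$ by localization, and then attack that case by combining \corref{c:Av K} with a geometric input coming from a degeneration of $G/K$. For the first reduction, I would run exactly the argument that derives \thmref{t:J self-dual on g K} from \thmref{t:J self-dual on X K}: use \propref{p:good Av!} to commute $\Av^{K/M_K}_!$ and $\Av^{K/M_K}_*$ past the localization adjunction $\Loc\dashv \Gamma$, use \propref{p:Loc and ULA gK} to transfer the ULA / $\fn$-finitely-generated conditions between the two sides, and use that $\Upsilon$ is an equivalence on both sides (the modified \propref{p:Ups g}) to match the two vertical composites in the diagram. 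After this it suffices to prove the diagram commutes for $\CC=\Dmod_\lambda(X)$.

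For the D-module case, \corref{c:Av K} lets me rewrite the left-then-down composite as $\fl_{K/M_K}^{-1}\circ \Av^{K/M_K}_! \circ \oblv_N$ on $\Dmod_\lambda(M_K\cdot N\backslash X)$. Since $\Upsilon^{-1}\simeq \Av^{N^-}_!\circ \oblv_N$, after pre-composing both sides with $\Upsilon$ the conjecture becomes equivalent to the single isomorphism
$$\fl_{K/M_K}^{-1}\circ \Av^{K/M_K}_! \circ \oblv_N \circ \Upsilon \;\simeq\; \Av^{K/M_K}_* \circ \oblv_{N^-}$$
of functors $\Dmod_\lambda(M_K\cdot N^-\backslash X)\to \Dmod_\lambda(K\backslash X)$, which is exactly the reformulation given in \conjref{c:main}. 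The primary line of attack on this isomorphism is to adapt the Bezrukavnikov--Kazhdan strategy of \cite{BK} to the present setting: construct, using a De~Concini--Procesi style wonderful compactification of the symmetric variety $G/K$, a one-parameter degeneration of $K\backslash X$ whose special fibre contains $(M_K\cdot N^-)\backslash X$ as a stratum, and identify the nearby-cycles functor of this family with the composite on the left-hand side above. The twist by $\fl_{K/M_K}$ should then fall out of the Verdier self-duality of nearby cycles, in parallel with how \cite{CY} used the analogous nearby-cycles interpretation of $J$ along the wonderful degeneration of $G$ established in \cite{ENV}.

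If this direct geometric construction proves out of reach, a backup plan is to verify the isomorphism on a set of compact generators: by \lemref{l:parab orbits} both $M_K\cdot N^-$ and $K$ have finitely many orbits on $X$, so the standard/costandard objects attached to these orbits generate both sides, and hyperbolic restriction (as used in the proof of \propref{p:Ups X}) together with the transversality input \propref{p:trans} would allow a stratum-by-stratum comparison. The main obstacle, however, is the one explicitly flagged by the authors in \secref{sss:rel to BK}: no construction of either the unit or the counit of the adjunction underlying this diagram is currently available, and I expect the hardest single step to be identifying the pseudo-identity functor $\on{Ps-Id}_{K\backslash X}$ with an asymptotics functor attached to a wonderful degeneration of $G/K$. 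Once such an identification is in hand, \thmref{t:pseudo-id and av rel} together with Braden-style manipulations of the kind used in the proof of \thmref{t:serre K} should supply the remaining formal steps.
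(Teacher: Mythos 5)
You have correctly diagnosed the status of this statement: it is labeled \emph{Conjecture} \ref{c:funct eq} in the paper and no proof is given, so there is no ``paper's own proof'' to compare against. What the paper does establish (and all it establishes) is the chain of equivalences \conjref{c:main} $\Leftrightarrow$ \conjref{c:funct eq} $\Leftrightarrow$ \conjref{c:Jacquet and J} $\Leftrightarrow$ \conjref{c:2nd adj} for $\CC=\Dmod_\lambda(X)$, with the first step coming from \corref{c:Av K} exactly as in your second paragraph, and the later steps from the adjunction $(\Av^{K/M_K}_!,\Av^{N^-}_*)$ and \thmref{t:J self-dual on g K}. Your proposal reproduces this reduction correctly, and your frank acknowledgment that the core step --- constructing the unit/counit, or identifying $\on{Ps-Id}_{K\backslash X}$ with an asymptotics/nearby-cycles functor along a wonderful degeneration of $G/K$ --- is missing aligns precisely with what the authors flag as open in \secref{sss:rel to BK}.

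Two smaller calibration remarks. First, your opening ``reduce to the D-module case by localization'' is aimed the wrong way: \conjref{c:funct eq} is already intrinsically a D-module statement, since $\on{Ps-Id}_{K\backslash X}$ has no analogue on $\fg\mod_\chi^K$ in this paper; what localization (via \propref{p:good Av!} and \propref{p:Loc and ULA gK}) would buy you is the transfer of the already-reformulated \conjref{c:main}/\conjref{c:2nd adj} from $\Dmod_\lambda(X)$ back to $\fg\mod_\chi$, not a reduction of \conjref{c:funct eq} itself. Second, your ``backup plan'' of checking on (co)standard generators stratum by stratum is not obviously sound as stated: since no candidate unit or counit is in hand, one does not yet have a \emph{natural transformation} between the two sides of the diagram whose invertibility could be tested on generators; a stratumwise isomorphism of values would not by itself produce a canonical isomorphism of functors. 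In short, your proposal is an accurate map of the known terrain and the open gap rather than a proof, and that is exactly the state of the art reflected in the paper.
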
 

Note that \conjref{c:funct eq} can be thought of as a sort of functional equation for the functor $\Av^{K/M_K}$, cf. \cite{Ga3}. 

\sssec{}

Note that we have two adjoint pairs of functors
$$\Av^{N^-}_!:\CC^K\rightleftarrows \CC^{M_K\cdot N^-}:\Av^{K/M_K}_*$$
and
$$\Av^{K/M_K}_!:\CC^{M_K\cdot N^-} \rightleftarrows \CC^K:\Av^{N^-}_*.$$

\medskip

We obtain that \conjref{c:main} is equivalent to the following one:

\begin{conj} \label{c:Jacquet and J}
The \emph{right} adjoint functor to 
$$\Av^{K/M_K}_*\circ \oblv_{N^-}:\CC^{M_K\cdot N^-}\to \CC^K$$
is given by $$J \circ \oblv_{K/M_K} \circ \fl_{K/M_K}.$$
\end{conj}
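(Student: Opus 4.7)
The plan is to first reduce Conjecture \ref{c:Jacquet and J} to Conjecture \ref{c:funct eq} for $\CC=\Dmod_\lambda(X)$, and then deduce the case $\CC=\fg\mod_\chi$ by localization. The equivalence of the three conjectures \ref{c:main}, \ref{c:funct eq} and \ref{c:Jacquet and J} is essentially given in the text: passing from \ref{c:Jacquet and J} to \ref{c:main} is an adjunction argument using that $(\Av^{K/M_K}_*,\Av^{K/M_K}_!)$ and $(\Av^{N^-}_!,\Av^{N^-}_*)$ are adjoint pairs and that $J\simeq \Av^{N^-}_!\circ\oblv_N\circ\Av^N_*$ by \thmref{t:J self-dual on g K}(b) and \thmref{t:J self-dual on X K}(b); the passage from \ref{c:main} to \ref{c:funct eq} uses \corref{c:Av K} and \thmref{t:serre K} to eliminate $\Av^{K/M_K}_!$ in favor of $\on{Ps-Id}_{K\backslash X}\circ\Av^{K/M_K}_*$. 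For the transition between the D-module and $\fg$-module versions, I would combine \propref{p:good Av!} for $\Av^{K/M_K}_!$ with the fact that both $\Loc$ and $\Gamma$ intertwine the relevant averaging and Upsilon functors (as in the proof of \thmref{t:J self-dual on g}), reducing everything to the flag variety side.

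Concentrating on the D-module version, the key task is to produce a canonical isomorphism between the two kernels on $K\backslash X\times M_K\cdot N^-\backslash X$ which represent the functors on either side of \conjref{c:funct eq}. Concretely, the right hand side corresponds to pull-push through the correspondence $M_K\backslash X\to M_K\cdot N^-\backslash X$ and $M_K\backslash X\to K\backslash X$, while the left hand side corresponds to composing with the long intertwining equivalence $\Upsilon$ and then applying $\on{Ps-Id}_{K\backslash X}\circ\Av^{K/M_K}_*$. My plan would be to construct a natural transformation between these two kernels by producing a canonical D-module on the affine closure of the orbit $M_K\backslash G/K$ inside a suitable product of partial flag varieties, then showing that the transformation is an isomorphism by hyperbolic restriction to the $\BG_m$-fixed locus, in the spirit of \secref{ss:longest intertwiner}. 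The ULA conditions (used in \thmref{t:pseudo-id and av rel}) should guarantee that the !- and *-variants agree where needed, so that the check reduces to a finite calculation on the strata classifying $K$- and $M_K\cdot N^\pm$-orbits, which is finite by \lemref{l:parab orbits}(a).

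The main obstacle, which the authors explicitly flag, is the absence of any known explicit formula for the unit or counit of the proposed adjunction. The most promising route is to import the nearby-cycles interpretation of $J$ from \cite{CY}+\cite{ENV}: under $\Loc$, the functor $J$ corresponds to $\Psi$ for the wonderful degeneration of $G$, and one may hope to adapt the Bernstein--Kazhdan treatment \cite{BK} of the $\fp$-adic second adjointness, where the unit and counit arise respectively from the specialization map for the degeneration and from the orthogonality of the $P$- and $P^-$-boundary strata in the wonderful compactification. Making this work requires constructing compatibilities between $\Psi$ and the averaging functors $\Av^{K/M_K}_*$, $\Av^{N^\pm}_!$, $\Av^{N^\pm}_*$ at the level of equivariant D-module categories — this is the step I expect to be the real obstacle, and it is what appears to keep \conjref{c:Jacquet and J} open in the present paper. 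Once unit and counit are in place, verification of the triangle identities should reduce, via Braden's theorem applied to the $\BG_m$-action of \secref{ss:longest intertwiner}, to a check on the $K$-fixed locus in $X$, which is a disjoint union of flag varieties of Levi subgroups of $K$ and should be tractable.
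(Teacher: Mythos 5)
This statement is a \emph{conjecture} in the paper, not a theorem, and the paper does not prove it: the authors explicitly say (in \secref{sss:rel to BK}) that they do not know how to write down either the unit or the counit of the proposed adjunction, in either the $\fg\mod_\chi$ or the $\Dmod_\lambda(X)$ context. What the paper \emph{does} establish is that Conjecture~\ref{c:Jacquet and J} is logically equivalent to Conjectures~\ref{c:main}, \ref{c:funct eq}, and \ref{c:2nd adj}, via the adjunction pairs $(\Av^{K/M_K}_!,\Av^{N^-}_*)$, $(\Av^{N^-}_!,\Av^{K/M_K}_*)$, Corollary~\ref{c:Av K}, and Theorem~\ref{t:J self-dual on g K}. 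Your first paragraph correctly reproduces this web of equivalences (including the reduction between the $\fg$-module and D-module versions via $\Loc$/$\Gamma$ and \propref{p:good Av!}), and that part of what you write is sound and matches the paper.

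However, the remainder of your proposal is not a proof and should not be read as one: it is a plan, and the plan has a genuine unfilled gap precisely where you flag it. Constructing the unit/counit, or equivalently producing the isomorphism of kernels in \conjref{c:funct eq}, is exactly the open problem. Your suggestion to construct the natural transformation from a canonical D-module on a compactified orbit and then check it by hyperbolic restriction presupposes that one already has a candidate morphism; the ULA estimates and the finiteness of orbits from \lemref{l:parab orbits} only help to \emph{verify} an isomorphism once a map is in hand, not to produce one. Your second suggested route --- importing the nearby-cycles interpretation of $J$ from \cite{ENV}/\cite{CY} and adapting the Bezrukavnikov--Kazhdan analysis of the wonderful degeneration \cite{BK} --- is literally the one the authors mention in the introduction as ``very tempting'' but that they ``do not know how to carry out.'' So you have correctly diagnosed the obstruction and correctly identified the most promising-looking line of attack, but neither you nor the paper crosses that gap. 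In short: the statement is open, the paper proves only equivalences among reformulations, and your proposal agrees with the paper on what is known and on what is missing.
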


\sssec{}

Combining with \thmref{t:J self-dual on g K}, we further obtain that \conjref{c:Jacquet and J} is equivalent to: 

\begin{conj} \label{c:2nd adj}
The \emph{right} adjoint functor to 
$$\Av^{K/M_K}_*\circ \oblv_{N^-}:\CC^{M_K\cdot N^-}\to \CC^K$$
is given by
$$\Upsilon^-\circ \Av^N_!\circ \oblv_{K/M_K} \circ \fl_{K/M_K}.$$
\end{conj}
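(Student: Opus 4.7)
The plan is to first reduce to the D-module setting $\CC = \Dmod_\lambda(X)$, then exploit the geometric interpretation of the Casselman-Jacquet functor as nearby cycles along the wonderful degeneration, following the analogy with \cite{BK} flagged in \secref{sss:rel to BK}.

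\emph{Step one: reduction to D-modules.} For regular dominant $\lambda$, \thmref{t:localization} gives an equivalence $\on{Loc}:\fg\mod_\chi \iso \Dmod_\lambda(X)$ compatible with the $G$-action, and each of the four averaging/forgetful functors entering the conjecture intertwines with $\on{Loc}$ and $\Gamma$ via \propref{p:good Av!}. For general $\lambda$ one transports the statement from a regular dominant twist $\lambda' = \lambda + \mu$ via the translation functors of \secref{ss:trans}, as in the proof of \propref{p:Loc and ULA gK}. Using \thmref{t:J self-dual on X K} together with \propref{p:K impl}(b) to replace $\Upsilon^-\circ \Av^N_!\circ \oblv_{K/M_K}$ by $J\circ \oblv_{K/M_K}$ on compact objects, the task becomes exhibiting the adjunction
$$\Av^{K/M_K}_* \circ \oblv_{N^-} \;\dashv\; J \circ \oblv_{K/M_K} \circ \fl_{K/M_K}$$
on $\Dmod_\lambda(X)$.

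\emph{Step two: geometric realization.} I would pass to (the $X$-bundle over) the wonderful degeneration $\CY\to \BA^1$ of the symmetric space $G/K$: its generic fiber carries the $K$-action on $X$, while its special fiber contains an open stratum carrying the $M_K\cdot N^-$-action on $X$. By the main theorem of \cite{ENV}, under $\on{Loc}$ the functor $J$ on $\Dmod_\lambda(X)$ coincides with the nearby cycles functor $\Psi$ along this family. Both $F := \Av^{K/M_K}_*\circ \oblv_{N^-}$ and the conjectural right adjoint $R := J\circ \oblv_{K/M_K}\circ \fl_{K/M_K}$ can then be realized as compositions of $!$- and $*$-operations along the correspondence linking the generic and special fibers inside $\CY$. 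Candidate unit $\Id \to R\circ F$ and counit $F\circ R \to \Id$ should be assembled from the universal natural transformations of the six-functor formalism for $\CY$, namely
$$j^*\to \Psi \leftarrow i^*,$$
together with the Verdier self-duality of $\Psi$ already exploited in \cite{CY}. The shift and twist $\fl_{K/M_K}$ should emerge from the relative dualizing complex of the special fiber inside $\CY$, paralleling the shift in \thmref{t:pseudo-id and av rel}.

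\emph{The main obstacle}, explicitly flagged in \secref{sss:rel to BK}, is the endgame of Step two: verifying that these candidate unit and counit satisfy the triangle identities. In the $\fp$-adic setting of \cite{BK} this is carried out using asymptotics of matrix coefficients and properties of the Bernstein center, neither of which has an obvious geometric counterpart here. A plausible reduction is to test both triangle identities against the standard and costandard generators of $\Dmod_\lambda((M_K\cdot N^-)\backslash X)^c$ attached to $(M_K\cdot N^-)$-orbits on $X$, which by \lemref{l:parab orbits} form a finite stratification, thereby cutting the problem down to explicit nearby cycles computations on the restrictions of $\CY$ to these orbits. Making this rigorous, however, appears to demand a genuinely new geometric input beyond the framework of the present paper, effectively a D-module analogue of the Bernstein-Kazhdan analysis.
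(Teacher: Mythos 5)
This statement is Conjecture~\ref{c:2nd adj} in the paper, and the paper does not prove it. There is therefore no ``paper's own proof'' to compare against; the authors offer only a chain of equivalent reformulations (\conjref{c:main} $\Leftrightarrow$ \conjref{c:funct eq} $\Leftrightarrow$ \conjref{c:Jacquet and J} $\Leftrightarrow$ \conjref{c:2nd adj}, via Corollary~\ref{c:Av K}, the two adjoint pairs listed before \conjref{c:Jacquet and J}, and Theorem~\ref{t:J self-dual on g K}) and then, in \secref{sss:rel to BK}, explicitly state that they do not know how to write down the unit or counit of the conjectural adjunction.

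Your proposal is not a proof and you say so yourself. Your Step~1 is sound: the passage between $\fg\mod_\chi$ and $\Dmod_\lambda(X)$ via $\on{Loc}$/$\Gamma$, translation functors, and \propref{p:good Av!} is exactly how the paper moves between these contexts, and your replacement of $\Upsilon^-\circ\Av^N_!\circ\oblv_{K/M_K}$ by $J\circ\oblv_{K/M_K}$ is precisely the reduction from \conjref{c:2nd adj} to \conjref{c:Jacquet and J} that the paper itself records. Your Step~2, passing to the wonderful degeneration and invoking the identification of $J$ with nearby cycles from \cite{ENV} and \cite{CY}, is word for word the ``very tempting'' strategy the paper floats in \secref{sss:rel to BK} and then declines to pursue, for the same reason you give: neither they nor you can produce the unit and counit and verify the triangle identities. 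Your suggested test against orbit-stratified standards and costandards is a reasonable heuristic, but you correctly identify that it does not close the gap; a genuine D-module analogue of the Bezrukavnikov--Kazhdan asymptotics argument is still missing. So the honest verdict is: you have reproduced the authors' own heuristic discussion of the conjecture, not a proof of it, and your self-assessment of where the argument breaks down matches the paper's.
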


\sssec{}

We regard \conjref{c:2nd adj} as an analog of Bernstein's \emph{2nd adjointness theorem} for $\fp$-adic groups. 

\medskip

Recall that the latter says that
in addition to the tautological adjunction (the 1st adjointness)
$$r:\bG\mod \rightleftarrows \bM\mod:i$$
(here we denote by $G$ a $\fp$-adic group, by $M$ its Levi subgroup, by $i$ the normalized parabolic induction functor, and by $r$ the
Jacquet functor), we also have an adjunction
$$i:\bM\mod \rightleftarrows \bG\mod:\ol{r},$$
where $\ol{r}$ is the Jacquet functor with respect to the opposite parabolic. 

\medskip

Here is the table of analogies/points of difference between $\fp$-adic groups and symmetric pairs:

\begin{itemize}

\item The analog of $\bG\mod$ is the category $\fg\mod_\chi^K$;

\item The analog of $\bM\mod$ is \emph{not} $\fm\mod_\chi^{M_K}$, but rather $\fg\mod_\chi^{M_K\cdot N}$ (or $\fg\mod_\chi^{M_K\cdot N^-}$);
note that this category explicitly depends on the choice of the parabolic or its opposite. 

\smallskip

\item The analog of the tautological identification $\bM\mod=\bM\mod$ is the intertwining functor $\Upsilon$;

\smallskip

\item The analog of the induction functor $i$ is $\Av^{K/M_K}_*$; 

\smallskip

\item The analog of the Jacquet functor $r$ (resp., $\ol{r}$) is $\Av^{N}_!$ (resp.,  $\Av^{N^-}_!$).

\end{itemize}

\medskip

With these analogies, \conjref{c:2nd adj} says that the right adjoint to the induction functor $\Av^{K/M_K}_*$ is isomorphic to
the the Jacquet functor $\Av^{N}_!$, up to replacing $N$ by $N^-$,  inserting the intertwining functor, and a twist.

\end{document}